\documentclass{article}

\usepackage[margin=3.5cm]{geometry}

\usepackage{graphicx}
\usepackage{subcaption}
\usepackage{amssymb}
\usepackage{amsthm}
\usepackage{dsfont}
\usepackage{mathrsfs}
\usepackage[lite,initials,msc-links, alphabetic,nobysame]{amsrefs}
\usepackage{amsmath}
\usepackage{centernot}
\usepackage{bm}
\usepackage{mathtools}
\usepackage{tikz-cd} 
\usepackage{enumerate}
\usepackage{comment}
\usepackage{float}
\usepackage[labelfont=bf]{caption}
\usepackage{titlesec}
\titleformat{\subsection}[runin]{\normalfont\bfseries}{\textmd{\thesubsection.}}{3pt}{}
\titleformat{\part}[block]{\normalfont\bfseries\Large}{\partname\nobreakspace\thepart}{20pt}{\normalfont\bfseries}
\usepackage{centernot}
\usepackage{xcolor}
\usepackage[title,titletoc]{appendix}
\usepackage{hyperref}
\newtheorem{theorem}{Theorem}
\newtheorem{lemma}[theorem]{Lemma}
\newtheorem{proposition}[theorem]{Proposition}
\newtheorem{corollary}[theorem]{Corollary}
\newtheorem{asum}[]{Assumption}
\numberwithin{equation}{section}
\theoremstyle{definition}

\usepackage{chngcntr}
\counterwithout{equation}{section}

\theoremstyle{definition}
\newtheorem{definition}{Definition}
\theoremstyle{remark}
\newtheorem{remark}{Remark} 
\allowdisplaybreaks

\newcommand{\con}[1]{ \xleftrightarrow{#1}}

\DeclareMathOperator{\sgn}{\textnormal{sgn}}

\renewcommand{\d}{\mathrm{d}}

\newcommand{\E}{\mathbb{E}}
\newcommand{\id}{\mathds{1}}

\newcommand{\RR}{\mathbb{R}}

\newcommand\vp{\varphi}
\newcommand\Ec{\mathcal{E}}
\newcommand\R{\mathbb{R}}
\renewcommand{\d}{\mathrm{d}}
\newcommand{\Z}{\mathbb{Z}}
\newcommand{\w}{\omega}
\renewcommand{\l}{\lambda}
\newcommand{\vpp}{\vp^+}
\newcommand{\vpm}{\vp^-}
\newcommand{\I}{\Phi}
\renewcommand{\P}{\mathrm{\mathbf{P}}}
\newcommand{\Q}{\mathrm{\mathbf{P}}}
\renewcommand{\E}{\mathrm{\mathbf{E}}}
\renewcommand{\L}{\xi}
\newcommand{\Lp}{\xi^+}
\newcommand{\Lm}{\xi^-}
\renewcommand{\i}{\mathds{1}}
\renewcommand{\S}{\mathcal{S}}
\newcommand{\vps}{ \widetilde{\vp}}
\newcommand{\eps}{\varepsilon}
\newcommand{\G}{\Gamma}
\newcommand{\g}{\mathfrak{g}}
\renewcommand{\H}{\mathbf{H}}
\newcommand{\F}{\mathcal{F}}
\newcommand{\Vis}{V_{\mathrm{Ising}}}
\renewcommand{\Vis}{V_{\pm 1}}
\newcommand{\Sis}{S_{\pm 1}}
\newcommand{\Pf}{\mathrm{{Pf}}}
\renewcommand{\k}[1]{{#1}^{(k)}}
\renewcommand{\path}{\tau}

\hypersetup{colorlinks=true,linkcolor=blue,citecolor=blue}

\makeatletter
\newcommand\connect[2][]{%
  \ext@arrow 9999{\longleftrightarrowfill@}{#1}{#2}}
\newcommand\longleftrightarrowfill@{%
  \arrowfill@\leftarrow\relbar\rightarrow}
\makeatother

\title{{Double cluster swapping for spin models: \\Pfaffian relations and sharpness}}

\date{\today}

\author{\ Diederik van Engelenburg$^*$ \and Lorca Heeney$^*$ \and  Marcin Lis\thanks{ Technische Universität Wien.\\ Emails: \texttt{\{diederik.engelenburg, lorca.heeney-brockett, marcin.lis\}@tuwien.ac.at}
}}

\begin{document}
%
\maketitle
\begin{abstract}
We introduce and study a new geometric representation for general classical spin models. 
It consists of two coupled percolation configurations that are the joint FK (random cluster) representation of the Ginibre rotation of two independent copies of the spin model, and can be viewed as an extension of Sheffield's cluster swapping defined in the context of height functions~\cite{She}.
Our approach combines the advantages of the random current and FK representations of the Ising model: it provides a percolation interpretation for various truncated correlation functions and at the same
time satisfies the FKG inequality (for a large subclass of models).

We highlight its strength and versatility by establishing two very different results. We first prove the converse of the classical fact that boundary multi-point correlation functions of planar Ising models
are given by Pfaffians of the two-point functions. Indeed, we show that if a general spin model on a general graph satisfies the Pfaffian relations, then up to natural local modifications it must actually be an Ising model on a planar graph. In particular, the algebraic Pfaffian relations imply the topological feature of planarity. 
Our second application is a proof of sharpness of the phase transition for a class of spin models first considered by Ellis, Monroe and Newman~\cite{ellis1976ghs}, which we do by generalising the celebrated argument of Duminil-Copin and Tassion \cite{DCT}, replacing the use of the random current with our new representation.
\end{abstract}
\tableofcontents



\section{Introduction}






\paragraph{Classical spin models.}
In this article, we will consider real-valued classical spin models governed by the Gibbs measure
\[
	\left< F(\vp) \right>_{}^{} \propto \int_{\R^V} F(\vp) \exp\Big( \beta \sum_{xy \in E} \vp_x \vp_y \Big) \,\prod_{x \in V} \d\l(\vp_x)
\] 
where $\vp \in \R^V$ is a spin configuration on a graph $G = (V,E)$, $\beta > 0$ is the inverse temperature and $\l$ is a symmetric measure on $\R$ which will often arise in the form
\[
\d\l(\vp) \propto \exp(-P(\vp)) \d\vp
\] 
for $P$ an even function of super-quadratic growth which we refer to as the \emph{potential}. For now, we will take the existence of such measures on infinite graphs for granted.

\paragraph{Phase transitions. }In the setting of the hypercubic lattice $\Z^d$ ($d \geq 1$), these models have a long history in the mathematical physics community as lattice approximations to the Euclidean $P(\vp)$-models \cite{GRS}. The Ising model, where
\begin{align} \label{eq:isingmeas}
	\l = \tfrac{1}{2}\delta_1 + \tfrac{1}{2}\delta_{-1},
\end{align}
and the $\vp^4$ model (where $P$ is a quartic even polynomial) are certainly the most famous and well understood within this class, and are expected to belong to the same universality class. Classical spin models are in general known to undergo a phase transition when $d \geq 2$, in the sense that the critical point defined e.g.~in terms of the decay of the two-point function:
\[
	\beta_c := \inf \left\{ \beta \geq 0 \mid \inf_{x \to \infty} \left<\vp_o \vp_x \right>_{\Z^d,\beta} > 0 \right\} 
\] 
is non-trivial \cite{Pei36,glimm1975phase}. Here $\langle \cdot \rangle_{\Z^d,\beta} $ is the infinite volume measure on $\mathbb Z^d$ at inverse temperature $\beta$. Arguably the most interesting questions concern the behaviour of the model exactly at or around the critical point. The transition is said to be \emph{continuous} if the correlation function continues to decay to $0$ at $\beta_c$, and \emph{discontinuous} or \emph{first-order} if this is not the case. The order of the phase transition will vary depending on the potential $P$~\cite{FriVel}. 
What is expected to be universal is that for every potential the transition is \emph{sharp}. That is, the correlation functions decay exponentially fast below the critical point. 

\paragraph{Rigorous results. }It is now known that the phase transition of the Ising model is continuous \cites{Yan52,AF,ADCS} and this has also recently been extended to the $\vp^4$ model when $d \geq 3$ \cites{GPPS}. This extension used the fact that the $\vp^4$ potential belongs to the special \emph{Griffiths-Simon (GS) class} \cite{simon1973varphi4}, which loosely means that it can be directly approximated by Ising models on a modified graph. Using the GS class may look like an appealing strategy to extending these results further to other potentials, however, it is rather opaque and directly determining if a potential lies in this class is an extremely hard problem{\hypersetup{linkcolor=black}\footnote{In fact, a complete classification may have implications for the Riemann Hypothesis \cite[Section 4]{newman}.}}. It is known for example that if $\lambda P + \alpha \vp^2$ belongs to the GS class for every $\lambda > 0$ and $\alpha \in \R$, then $P$ must be a quartic polynomial \cite{newman1976rigorous}. As such, a different approach is needed when considering more general models.

\paragraph{Random currents. }The main reason why progress of the Ising model has surged in recent times is the use of the \emph{random current representation}, introduced in \cites{GHS,Aiz82}. Along with a piece of combinatorial magic called the \emph{switching lemma} (see \cite{duminil2018random} for an introduction), it has allowed for a fine study of phenomena in the Ising model by translating them into questions of percolation and geometry. A non-exhaustive list of results of this kind are the aforementioned continuity and sharpness, triviality in dimension $d \geq 4$, computation of mean-field exponents and emergent planarity in two-dimensional Ising models \cites{AF,ADCS,ABF,DCT,Aiz82,ADC,DCP-2-point,vEGPS,ADTW}. The representation has also been extended to the $\vp^4$ model, again relying on its place in the GS class, and enabling progress there \cite{GPPS}. Constructing a suitable random current for potentials outside the GS class is, as far as we know, out of reach. The main advantage of random currents in short is that they give a direct probabilistic interpretation of differences in and products of correlation functions. 

\paragraph{FK representations. }Another tool in the study of the Ising model is the FK-Ising representation \cite{FK}. More blunt than the random current, it does have certain advantages, foremost the FKG inequality \cite{FKG} and a direct coupling with the spin model. 
A common procedure for generalising it to other real-valued spins systems $\vp$ is to find some suitable conditioning (often on $|\vp|$) so that the conditional law of $\sgn(\vp)$ is a ferromagnetic Ising model (with coupling constants depending on $|\vp|$) and then use the conditional FK-Ising representation. This has be applied in many situations, extending FK-representations to any reference measure $\lambda$ but also outside this class e.g.~to the Ashkin-Teller model and the loop $O(n)$ model to name just two \cite{PfiVel,GlaPel,Lis22a,glazman2025planar}. Moreover, various FK-representations are now used in the general setting of random surface models \cite{Lammers,lammers2022dichotomy,Lis20,Lis22b,glazman2025delocalisation} after they were pioneered by Sheffield in his PhD thesis~\cite{She}.  

\paragraph{A new representation.}One contribution of this work is to introduce a new representation for any classical spin model. 
The construction starts by rotating (up to a factor of $\sqrt 2$) two independent copies $\vp$ and $\vp'$ of the system by $45^\circ$:
\[
	(\vp,\vp') \longrightarrow (\vp^+,\vp^-) \quad \text{ where } \quad \vp^\pm = \left( \vp \pm \vp' \right) / 2.
\]
This is sometimes called the Ginibre rotation as it was famously first used in the proof of the Ginibre inequality~\cite{Gin}.
Despite the pair $(\vp^+,\vp^-)$ no longer having independent entries, we can still construct a joint FK-representation $(\w^+,\w^-)$ for $ (\sgn(\vp^+),\sgn(\vp^-))$. One half of this representation (just $\w^-$) appears under the name \emph{cluster swapping} in the landmark work mentioned above~\cite{She}, since flipping the sign of $\vp^-$ on any cluster of $\w^-$ is equivalent to the swap $\vp \leftrightarrow \vp'$. However, it is properties of the pair $(\w^+,\w^-)$ that will play the main role in our considerations. 
In light of this we chose to call the new representation \emph{double cluster swapping}.

It turns out that it combines some of the main advantages of both double random currents and the FK representation:
\begin{itemize}
\item One can write various truncated correlation functions, such as 
\[
\left< \vp_x \vp_y \right> - \left<\vp_x  \right> \left<\vp_y \right> \quad \text{ and } \quad \left<\vp_x \vp_y \vp_z \vp_w \right> - \left<\vp_x \vp_y \right> \left<\vp_z \vp_w \right>,
\] 
	in terms of certain connection events for $\w^+$ and $\w^-$.
\item The configurations $\w^+$ and $\w^-$ are naturally coupled with the spin configurations $\vp^+$ and $\vp^-$ (and hence also with $\vp$ and~$\vp'$).
\item The quadruple $(|\vp^+|,-|\vp^-|,\w^+,-\w^-)$ satisfies the \emph{FKG inequality} for a natural subclass of potentials originally considered in \cite{ellis1976ghs}. 
In particular, for such measures, $\w^+$ and $\w^-$ are negatively correlated with each other.
\end{itemize}

\paragraph{Two applications. } We use double cluster swapping to prove two very different results. 
\begin{itemize}
	\item We show that the planar Ising model is unique, in a matter that we will make precise, in satisfying the \emph{Pfaffian relations}. The Pfaffian relations are a system of equations concerning the correlation functions of a spin model and are the fermionic counterpart to Wick's rule for Gaussian systems. They were proved to hold for the boundary spins in planar Ising models in \cite{GBK} and one of our main theorems may be seen as the converse to this celebrated result. Remarkably, the algebraic Pfaffian relations imply the topological feature of planarity.
		To the best of our knowledge no statement of this type is achievable through the known representations of spin models. 

	\item We prove that the phase transition for any potential $P$ belonging to the explicit class defined in \cite{ellis1976ghs} is sharp. We do this by generalising the argument of Duminil-Copin and Tassion \cite{DCT}, replacing the use of the double random current with the new representation. This result was new for any potential not belonging to the GS class until a recent work by Panagiotis and Veitch \cite{panagiotis2026subcritical} that appeared during the preparation of this manuscript (see Remark \ref{remark:panagiotis}). The proof in~\cite{panagiotis2026subcritical} applies to any potential $P$ and the methods are disjoint from ours (see Remark \ref{remark:panagiotis}). We believe that our approach has value nonetheless since it is comparatively elementary and also applies in the infinite-range setting.
\end{itemize}

\section{Main results}
We always take $\l$ to be a symmetric probability measure on $\R$, which we call the reference measure. An important class of examples is when $P : \R \to \left[ 0,\infty \right]$ is an even function such that the measure $\l = \l^{(P)}$ given by 
\[
\d\l(\vp) \propto \exp( - P(\vp)) \d \vp,
\]
is well-defined. The reference measure $\l$ is always assumed to be non-trivial ($\l \neq \delta_0$). We (almost) always assume the following super-Gaussian tail assumption on $\l$:
\begin{asum}\label{asum:one}	The reference measure satisfies		
\[
				\int_\R \exp( \alpha \vp^2) \d\l(\vp) < \infty \quad \text{ for every }\alpha \geq 0.
\]
\end{asum}
The one exception to this is that the results of Part \ref{part:1} also apply to the Gaussian free field itself (i.e.~when $P$ is a quadratic polynomial). Under this assumption we can precisely define the spin models for which our results hold.
\begin{definition}[]\label{def:spinmodel}
Let $G=(V,E)$ be a finite graph and let $\vp \in \R^V$ be distributed according to
\begin{equation}\label{eq:spinsystem}
	\left< \cdot \right>_{G,\beta,h} = \left< \cdot  \right>_{G,\beta, h, \l} \propto \exp\Big( \beta \sum_{uv \in E} J_{uv} \vp_u \vp_v  + h \sum_{u \in V}  \vp_u \Big) \prod_{u \in V} \d\l(\vp_u)
\end{equation} 
where $J : E \to [0,\infty)$ are the coupling constants, $\beta \geq 0$ is the inverse temperature, $h \in \R$ is the external magnetic field and $\l$ is a reference measure satisfying the assumptions above. 
\end{definition}
Any of $G,\beta,h$ and $\l$ may be dropped from the notation $\left< \cdot \right>_{G,\beta,h,\l}$ where clear.
More generally, we may even allow for a family of reference measures $(\l_v)_v$ which vary across the vertices of $v \in V$.
\\[1em]
\noindent These spin models have been extensively studied and we recommend \cite{simon2026phase} for a comprehensive introduction.
\subsection{Application I: Pfaffian relations.}\label{sec:app2}\mbox{}\par
\paragraph{Background. }Pfaffian relations are a set of equations determining how higher-order correlations may be expressed in terms of the second-order (or two-point) correlations. They are the fermionic counterpart to Wick's rule for the bosonic free field. It is well known since the work of Groeneveld, Boel and Kasteleyn~\cite{GBK} 
that spin correlations of an Ising model on a planar graph
satisfy Pfaffian relations (see Definition~\ref{def:pfaf} below) 
whenever the spins are placed on the boundary of the graph (or more generally on a single face of the graph). 
In recent years new graphical proofs of this identity appeared using 
the double random current representation~\cites{LisT,ADTW}. Another possible approach is to directly use the exact solvability of 
the planar Ising model in the form of its dimer representation on the Fisher graph~\cite{FisherDim} 
or the Kac--Ward solution~\cite{KacWard}. We also mention that Pfaffian relations have been shown to hold asymptotically at the critical point of finite-range Ising models on $\Z^2$ \cite{ADTW} and it is conjectured that they also hold asymptotically for the critical $\vp^4$ model in the same setting \cite[Problem 2]{GPPS}.

\subsection{Setup and result.} Let us fix $G=(V,E)$, $J$ and $\beta > 0$ as above, and also a family $(\l_v)_{v\in V}$ of reference measures. We will use the shorthand
\[
	\left<  \cdot \right>_{G} = \left< \cdot \right>_{G,\beta,J,0,(\l_v)_v}
\]
for the corresponding spin model defined in Definition \ref{def:spinmodel} with zero magnetic field.
\begin{definition} \label{def:pfaf}
	Let $W =\{ v_1,v_2,\ldots, v_{n} \} \subseteq V$.
	We say that $\left<\cdot \right>_G$ satisfies \emph{Pfaffian relations} on the set $W$ if for any subset $I\subseteq [n]$,
	\begin{equation}\label{eq:Pfeq}
        \Big \langle  \prod_{i\in I}\vp_{v_i} \Big \rangle_G = \Pf(M_I),
    \end{equation}
    where $M$ is the $n\times n$ skew-symmetric matrix with 
    $M_{i, j} = \langle \vp_{v_i} \vp_{v_j} \rangle$ for $i<j$, 
    and $M_I$ is its restriction to $I \times I$. 
    The Pfaffian of $M_I$, denoted $\Pf(M_I)$ here, is the positive square root of the determinant of $M_I$.
\end{definition}
\noindent We note that this definition depends on a (cyclic) ordering of the vertices in $W$ which we leave implicit in the notation, and is only non-trivial when $n$ is even and at least four. 	
As an example, if $W = \left\{ v_1,v_2,v_3,v_4 \right\}$ is of size $4$ then the Pfaffian relation on $W$ is exactly
\begin{equation}\label{eq:pfaff4}
	\left<\vp_{v_1} \vp_{v_2} \vp_{v_3} \vp_{v_4} \right>_G = \left<\vp_{v_1} \vp_{v_2} \right>_G \left< \vp_{v_3} \vp_{v_4} \right>_G + \left< \vp_{v_1} \vp_{v_4} \right>_G \left< \vp_{v_2} \vp_{v_3} \right>_G - \left< \vp_{v_1} \vp_{v_3} \right>_G \left<\vp_{v_2} \vp_{v_4} \right>_G.
\end{equation} 
	This is graphically depicted in~Fig~\ref{fig:fourpt}, showing that the sign of each term is determined by the parity of its diagram's number of intersections.

\begin{figure}
	\centering
	\includegraphics[scale=0.6]{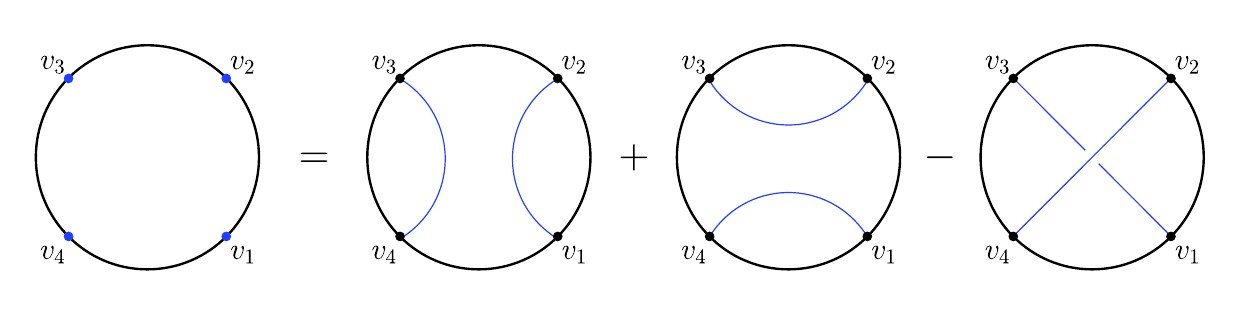}
	\caption{A diagramatic representation of the Pfaffian relation for four points.}
	\label{fig:fourpt}
\end{figure}

To state the result we need two further definitions. Firstly, suppose $V' \subset V$ is a  subset of the vertices of $G$. We define the \emph{path-induced} graph $G'=(V',E')$ to have vertex set $V'$ and edge set $E'$, where $uv \in E'$ if and only if there exists a simple path in $G$ connecting $u$ and $v$ and avoiding $V'$ except at its endpoints. 
The path-induced graph can alternatively be constructed by a sequence of three types of local moves (see Section \ref{sec:graphred}). 
Our second definition is that we say that a vertex $v$ is of \emph{Ising type} if the reference measure $\l_v $ satisfies
\[
	\l_v \propto \delta_s + \delta_{-s}
\] 
for some $s > 0$. 
Note that up to a change of the coupling constants and a vertex-wise rescaling of correlation functions, this is equivalent to \eqref{eq:isingmeas}. 
An instance of Definition \ref{def:spinmodel} is an Ising model on $G$ if each vertex is of Ising type.

We may now present our first main result.
\begin{theorem} \label{thm:pfaffian}
	Let $\langle \cdot \rangle_G$ be a classical spin system on a finite graph $G=(V,E)$ 
    satisfying Pfaffian relations on $W\subseteq V$. 
    Then, 
	\begin{itemize}
		\item 
    there exists a set $U \subseteq V$ of vertices of Ising type, 
    such that the path-induced graph $G'$ from $G$ by $U\cup W$ is a planar graph with $W$ lying on one of its faces.
    		\item 
			Moreover, there exists an Ising model $\sigma \sim \left< \cdot  \right>_{G'}$ on $G'$ with ferromagnetic (nonnegative) coupling constants $J'$ such that 
    for all $I \subset W$,
    \[
        {\Big \langle \prod_{v \in I} {\vp_{v} } \Big\rangle_G}
	= \Big \langle \prod_{v\in I} \sigma_{v}\Big \rangle_{G'} .
    \]
	\end{itemize}
\end{theorem}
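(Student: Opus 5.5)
We propose to prove Theorem~\ref{thm:pfaffian} by running the double cluster swapping representation $(\w^+,\w^-)$ on two independent copies $\vp,\vp'$ of $\langle\cdot\rangle_G$. The first step expresses the defect in the four-point Pfaffian relation \eqref{eq:pfaff4} as a nonnegative sum of connection probabilities. Concretely, for a cyclically ordered quadruple $v_1,\dots,v_4 \in W$, we would write
\[
\langle \vp_{v_1}\vp_{v_2}\vp_{v_3}\vp_{v_4}\rangle_G - \langle\vp_{v_1}\vp_{v_2}\rangle_G\langle\vp_{v_3}\vp_{v_4}\rangle_G
\]
and the analogous expressions as expectations of $\vp^\pm$ products, via $\vp_x\vp_y\vp'_z\vp'_w$ identities after the Ginibre rotation. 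Swapping $\vp\leftrightarrow\vp'$ on $\w^-$-clusters (equivalently, flipping $\vp^-$) and flipping $\vp^+$ on $\w^+$-clusters would turn these into sums of terms of the form $\E[|\vp^\pm_{v_i}|\cdots \,\i\{\text{connection pattern}\}]$. The Pfaffian identity then becomes the vanishing of a nonnegative quantity: with positive probability, $\w^+$ must not connect $v_1,v_3$ while $\w^-$ connects $v_2,v_4$, together with the symmetric crossing patterns.

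The second step converts these vanishing statements into structural information about $G$. Each edge opens with positive probability in both $\w^+$ and $\w^-$ (given nonzero spins). Hence any pair of vertex-disjoint paths in $G$, one joining $v_1,v_3$ and one joining $v_2,v_4$, with all intermediate spins free to be nonzero, would produce the forbidden event. There is one exception: a non-Ising vertex $u$ lying on such paths, or where the $\w^\pm$ structure can cross, must contribute positively. At a vertex whose $\l_u$ is not of the form $\delta_s+\delta_{-s}$, $|\vp^+_u|$ and $|\vp^-_u|$ can both be nonzero simultaneously with positive probability, so a $\w^+$-path and a $\w^-$-path can pass through the same vertex. At an Ising vertex, $\vp_u,\vp'_u\in\{\pm s\}$ forces exactly one of $\vp^\pm_u$ to vanish, so such crossings are impossible.

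From this we conclude the following. Taking $U$ to be the set of Ising-type vertices, no two paths through non-$U$ vertices can cross in the sense above. Every non-Ising vertex must therefore be "dispensable": it either lies on no path between $U\cup W$ vertices, or is removable by the local moves of Section~\ref{sec:graphred} without changing the $W$-correlations. Passing to the path-induced graph $G'$ on $U\cup W$, we obtain a graph in which no two vertex-disjoint paths cross between interlaced boundary quadruples. By the two-paths theorem (Seymour / Thomassen / Jung) this is equivalent to $G'$ plus a cycle through $W$ being planar, i.e.\ $G'$ planar with $W$ on one face. Ising vertices with $\vp^+_u=0$ or $\vp^-_u=0$ deterministically act as the "non-crossing" points, which is exactly the discrete Jordan-type constraint.

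The final step constructs $J'$. We integrate out $V\setminus(U\cup W)$ to obtain an effective model on $U\cup W$. The non-Ising vertices of $W$ must then be shown to be effectively Ising, with only two-body ferromagnetic interactions along the edges of $G'$. The crossing-free property should kill multi-body couplings, and Griffiths-type positivity should give $J'\ge0$. Alternatively, the vanishing of the Pfaffian defect on $W$ itself could force $|\vp_w|$ to be deterministic for $w\in W$. We expect the main obstacle to be this identification of the effective interaction as a genuine pairwise Ising model on $G'$. In particular, the reduction from "no crossing occurs with positive probability" to the exact combinatorial planarity plus edge-wise representation is hard, because degenerate cases where a coupling vanishes or paths are only probabilistically blocked need care. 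The route we would try first is an induction on the number of non-$U\cup W$ vertices, removing them one at a time via series/parallel/leaf moves.
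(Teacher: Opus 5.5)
Your first two steps are correct and match the paper's starting point. The four-point defect is a positive multiple of $\E[\vp^+_{v_1}\vp^+_{v_3}\vp^-_{v_2}\vp^-_{v_4}]$, i.e.\ of the weighted probability of the joint event $\{v_1\overset{\w^+}{\longleftrightarrow}v_3,\ v_2\overset{\w^-}{\longleftrightarrow}v_4\}$; your phrase ``must not connect'' should read ``connects''. Its vanishing is equivalent to every $v_1$--$v_3$ path meeting every $v_2$--$v_4$ path at a vertex of $\Vis$ (Lemma~\ref{lemma:keylemma}).

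The gap is everything after this. First, $U=\Vis$ is the wrong choice, and the two-paths theorem does not say what you claim. It gives a disk embedding only up to separations of order at most three from the terminals: arbitrary graphs may be glued onto vertices, edges and triangles of the planar part. For $|W|>4$ you would also need its multi-terminal version. Concretely, take the Ising model on a $4$-cycle $w_1w_2w_3w_4$ together with a $K_5$ on $\{w_1,w_2,w_3,x,y\}$. It has the crossing property, since every path to $w_4$ passes through $w_1$ or $w_3$, so it satisfies the Pfaffian relations. Yet its path-induced graph on $\Vis\cup W=V$ is $G$ itself, which is non-planar. The same obstruction appears on the spin side. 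A non-Ising vertex whose four Ising neighbours reach $W$ only through three vertices is compatible with the crossing property, but integrating it out in general creates a four-body interaction (from the fourth cumulant of $\lambda_v$). So your effective model on $\Vis\cup W$ is not pairwise. One must instead repeatedly cut off everything separated from $W$ by at most three vertices. Series/parallel/leaf moves do not suffice; the three-vertex (star--triangle type) reductions are essential. In general $U$ is then a proper subset of $\Vis$, and only for the resulting fully reduced graph does the Robertson--Seymour statement (Proposition~\ref{prop:main}) give a disk embedding.

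Second, you are missing the analytic lemma that makes each such cut legitimate for the spin model. For $|H|\le 3$, the correlations on $H$ of any ferromagnetic spin model agree, after rescaling each non-Ising spin, with those of an Ising model on $H$ with nonnegative pair couplings, with trivial rescaling at Ising vertices (Lemma~\ref{lemma:isingreplacement}). Multi-body terms are excluded by evenness together with $|H|\le 3$, not by the crossing property. Nonnegativity comes from the Griffiths inequality $\langle\vp_{u_1}\vp_{u_2}\rangle\ge\langle\vp_{u_1}\vp_{u_3}\rangle\langle\vp_{u_2}\vp_{u_3}\rangle$ when $\vp_{u_3}^2\equiv 1$. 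This fails for four points (Lebowitz), which is why separators must have size at most three.

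You then need Menger's theorem plus the crossing property to show such separators exist, e.g.\ that every non-Ising vertex is cut off from $W$ by at most three Ising vertices. That argument requires $W\subset\Vis$ first, so that $\Vis$-disjoint paths end at distinct points of $W$. Your fallback for non-Ising vertices of $W$ is false: the Pfaffian relations do not force $|\vp_w|$ to be deterministic, as Remark~\ref{remark:expexamp} shows with arbitrary potentials at $v_2,v_4$. What you need instead, for each non-Ising $w\in W$, is a set $S\ni w$ with $|S|\le 3$ and $S\subset\Vis\cup W$, such that all neighbours of $w$ lie in $S$ or in the part cut off by $S$. Then $w$ can be replaced by a rescaled Ising spin without changing the $W$-correlations. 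Finding $S$ requires a genuine case analysis, and it must be done before removing the interior non-Ising vertices, since otherwise the reduction can get stuck.
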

That is, we show that if the correlation functions of a classical spin model on a finite graph $G = (V, E)$
satisfy Pfaffian relations on a subset of $n$ vertices, then they match those of a ferromagnetic Ising model on a planar graph with $n$ boundary vertices.
Moreover, we explicitly construct such an Ising model by showing that there must have \emph{already existed} a subset of vertices of Ising type (see below) which together with $W$ induced a planar graph (whose connectivities come naturally from $G$). 
Such a result is, to the best of our knowledge, unattainable using the known methods where precise and special combinatorial properties of the Ising model are used from the start.

\begin{remark}
 Interestingly, it is not true that the vertices in $W$ must be of Ising type themselves for the Pfaffian relations to hold. See Remark \ref{remark:expexamp} for an explicit example.
\end{remark}

An interesting corollary of (the proof of) Theorem \ref{thm:pfaffian} is the following.
\begin{corollary}\label{cor:thm1}
	Let $\left< \cdot \right>_{G}$ be a classical spin model on $G = (V,E)$ and let $W \subset V$. Suppose that for each subset $W' \subset W$ of size $4$ the Pfaffian relation on $W'$ (i.e.~equation \eqref{eq:pfaff4}) is satisfied. Then, $\left< \cdot \right>_{G}$ satisfies the Pfaffian relations on the entire set $W$.
\end{corollary}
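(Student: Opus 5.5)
The plan is to read the corollary off the structure of the proof of Theorem~\ref{thm:pfaffian}, checking that the proof only ever uses the four-point relations \eqref{eq:pfaff4} on quadruples $W'\subset W$ and never the relations for larger subsets. With that in hand, the conclusion of the theorem holds under the weaker hypothesis: there is a set $U$ of Ising-type vertices whose path-induced graph $G'$ on $U\cup W$ is planar with $W$ on one face, and a ferromagnetic Ising model on $G'$ whose correlations agree with those of $\langle\cdot\rangle_G$ on $W$. By the theorem of Groeneveld, Boel and Kasteleyn \cite{GBK}, boundary correlations of a planar ferromagnetic Ising model satisfy the Pfaffian relations for every subset of $W$. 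Transferring this through the equality of correlations gives the Pfaffian relations for $\langle\cdot\rangle_G$ on all of $W$.

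The core work is the four-point analysis in the double cluster swapping representation. First, write the truncated quantity
\[
\langle\vp_{v_1}\vp_{v_2}\vp_{v_3}\vp_{v_4}\rangle-\langle\vp_{v_1}\vp_{v_2}\rangle\langle\vp_{v_3}\vp_{v_4}\rangle
\]
and its cyclic analogues as probabilities of connection events for $(\w^+,\w^-)$. The difference between the two sides of \eqref{eq:pfaff4} then becomes a sum of nonnegative terms. Each term is either a probability of a ``crossing'' event, such as $v_1\leftrightarrow v_3$ and $v_2\leftrightarrow v_4$ in a configuration where these would have to intersect, or a term that is strictly positive unless a vertex carrying an intermediate cluster is of Ising type. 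This is the $\sgn/|\vp|$ decomposition: non-Ising randomness of $|\vp_u|$ produces strictly positive slack. So equality in \eqref{eq:pfaff4} forces two things: every vertex lying on a path in $G$ between points of $W$ that is relevant to the connections is of Ising type, and a.s.\ no pair of disjoint paths realises the crossed pairing $\{v_1v_3\},\{v_2v_4\}$ in the cyclic order.

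Let $U$ be the set of vertices forced to be of Ising type; vertices outside $U$ only appear inside detachable pieces that are integrated out to produce effective couplings. The absence of disjoint crossed paths for every quadruple of $W$, in cyclic order, is a purely graph-theoretic condition on the path-induced graph $G'$. A two-linkage (Jung/Seymour–Thomassen type) characterisation then shows that $G'$ with $W$ added on an outer cycle is planar, that is, $W$ lies on a single face of $G'$. Since each such condition involves only four terminals, only quadruple relations were used.

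The main obstacle is the middle step. One must make sure the strict positivity argument for the slack in \eqref{eq:pfaff4} isolates exactly the vertices of $U$ and the crossing obstructions from four-point information alone. The other possible failure is that the proof of the theorem determines the effective couplings $J'$ using higher Pfaffians; if so, I would instead define $J'$ by matching two-point functions, which the four-point relations already constrain. Once this is verified, the corollary follows immediately from \cite{GBK}.
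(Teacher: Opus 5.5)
Your outer reduction is sound in principle. The proof of Theorem~\ref{thm:pfaffian} uses the Pfaffian hypothesis only to obtain the $(W,\Vis)$-crossing property. The matching of all boundary correlations comes from Lemmas~\ref{lemma:isingreplacement}, \ref{lemma:firstmove} and \ref{lemma:secondmove}, which use only the Griffiths inequality. So four-point input followed by \cite{GBK} would work.

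The gap is in the step you yourself flag as the core, which you describe incorrectly. Expanding $\vp^\pm=(\vp\pm\vp')/2$ (zero field, so odd moments vanish) shows that the left-hand side minus the right-hand side of \eqref{eq:pfaff4} is a single nonnegative term,
\[
8\,\E\big[\vp^+_{v_1}\vp^+_{v_3}\vp^-_{v_2}\vp^-_{v_4}\big]=8\,\E\big[\L^+_{v_1}\L^+_{v_3}\L^-_{v_2}\L^-_{v_4}\,\i\big(v_1\overset{\w^+}{\longleftrightarrow}v_3,\ v_2\overset{\w^-}{\longleftrightarrow}v_4\big)\big].
\]
It is not a sum of crossing probabilities plus ``slack'' produced by the randomness of $|\vp_u|$. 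Its vanishing is equivalent (this is the proof of Lemma~\ref{lemma:keylemma}) to the statement that every path from $v_1$ to $v_3$ meets every path from $v_2$ to $v_4$ at a vertex of $\Vis$.

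It does \emph{not} force the vertices on relevant paths to be of Ising type: in Remark~\ref{remark:expexamp}, $v_2,v_4\in W$ are non-Ising and the relation holds. So your set $U$ of ``forced'' Ising vertices is not well defined. The claim that all other vertices sit in detachable pieces is exactly what the three-stage reduction in Section~\ref{sec:proof1} (with Move~2 for non-Ising vertices of $W$) has to establish.

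Your fallback of choosing $J'$ to match only two-point functions would be circular. \cite{GBK} would then identify the Ising model's higher correlations with $\Pf(M_I)$, but would say nothing about $\langle\vp(I)\rangle_G$ for $|I|\geq 4$, which is what you need.

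Once the crossing property is correctly extracted, the detour through the reduction algorithm, Proposition~\ref{prop:main} and \cite{GBK} is unnecessary. The paper's proof (the remark following Lemma~\ref{lemma:keylemma}) is that lemma read in both directions:
\begin{enumerate}[(i)]
\item Applying the identity above to each quadruple $v_i,v_j,v_k,v_l$ with $i<j<k<l$ uses only \eqref{eq:pfaff4} on that quadruple. It yields the $(W,\Vis)$-crossing property for all of $W$.
\item Conversely, take any non-planar pair $(A,B)$ of subsets of $W$. On $\F^+_A\cap\F^-_B$ there must be a $\w^+$-path and a $\w^-$-path joining interleaved pairs of points of $W$; otherwise pairing points within clusters would exhibit $(A,B)$ as planar.
\item By the crossing property these two paths share a vertex $u\in\Vis$. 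This is impossible: $\vp^+_u\vp^-_u=0$ by \eqref{eq:isingext}, so $u$ cannot carry open edges of both $\w^+$ and $\w^-$.
\end{enumerate}
Hence $\P(\F^+_A\cap\F^-_B)=0$ for every non-planar pair, and Corollary~\ref{cor:pfaff} gives the Pfaffian relations on all of $W$.
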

\subsection{Proof strategy. }The first step is to prove a novel geometric characterisation of the Pfaffian relations (Lemma \ref{lemma:keylemma}). The geometric condition concerns the probability of certain connection events for the pair $(\w^+,\w^-)$ vanishing. This already has many consequences:
\begin{itemize}
	\item It leads to a new proof of Pfaffian relations for the planar Ising model using the ideas from \cites{Lis22a}. This is based on the key observation that $\w^+$ and $\w^-$ may only meet at a vertex which is not of Ising type.
	\item It immediately proves Corollary \ref{cor:pfaff}.
	\item Most importantly, it forces strong constraints on the geometry of $G$ whenever $\left< \cdot \right>_{G}$ is a spin model on $G$ satisfying the relations. 
Roughly, if the Pfaffian relations are to hold then the Ising type vertices must be situated in order to `block' certain paths of $\w^+$ and $\w^-$ from crossing over one another. 
\end{itemize}
This geometric criterion could also be proved using the random current if we assumed to begin with that $\left< \cdot \right>_{G}$ is an Ising model, the real strength of using the new representation is that we \emph{do not assume it}. 

The rest of the argument is an intricate combination of both graph theory and properties of classical spin models. We develop an algorithm which applies certain reduction moves to the graph
\[
	G \to G^{(1)} \to G^{(2)} \to \ldots
\]
At each step, we crucially prove that the spin model $\left< \cdot \right>_{G}$ itself can be transformed onto $G^{(k)}$ whilst preserving the boundary correlation functions. This is significantly complicated by the fact that the vertices of $W$ themselves are not of Ising type (which can happen, see Remark \ref{remark:expexamp}). 
We must use the constraints on the geometry of the graph to prove that it is possible to remove all non-Ising vertices, so that the algorithm produces in finite time a fully reduced graph $G'$ and an Ising model on $G'$ with the same correlation functions as we started with.  
Finally, we show the final graph $G'$ to be planar using a purely combinatorial result (Proposition \ref{prop:main}) originally due to \cite{RobSey}. A different proof of this is included in Appendix \ref{sec:appendix}. 
\subsection{Application II: Sharpness.}\label{sec:app1}\mbox{}\par

\paragraph{Background. }(Subcritical) sharpness may be considered the statement that there is no \emph{intermediate phase}: the system transitions immediately from a strongly disordered phase to a phase with long-range order. 
It was first proved for the Ising model (and then extended to the $\vp^4$ and GS class) in \cite{ABF}. An alternative simpler argument was given in \cite{DCT}, which we briefly describe below. In \cite{duminil2019sharp}, a new robust approach was developed using a tool called the OSSS inequality. This has lead to a host of sharpness results including for the Blume-Capel and Ashkin-Teller models \cite{gunaratnam2024existence,aoun2024phase}. Still, Theorem~\ref{thm:sharpness} covers many cases for which the result was not previously known (see Remark~\ref{remark:panagiotis}). 
We stress that the supercritical counterpart of sharpness, i.e.~exponential decay of truncated correlations at $\beta > \beta_c$, is considered significantly more difficult and is only known for the Ising model \cite{duminil2020exponential}, although there has been some recent progress for other spin models \cite{GPPS2,gunaratnam2026supercriticalsharpnessrandomcluster}. We hope that the representation developed here could also be useful in attacking this problem, especially in light of \eqref{eq:corr2} and \eqref{eq:isingtrunc}. 

\begin{definition}[]
	We consider the class $\Ec$ of potentials $P$ which can either be written as 
	\begin{equation}\label{eq:Pdef}
	P(x) = \begin{cases}
		C \int_0^{|x|}{ g(t) \d t} \quad & |x| \leq A\\
		\infty \quad & |x| > A
	\end{cases} 
	\end{equation} 
	for some $A \in (0,\infty]$, $C > 0$ and a convex function $g : [0,A] \to \R$ with $g(0) = 0$, or are the weak limit of such measures. 
\end{definition}
\noindent 
If $P$ is three-times continuously differentiable then the condition $P \in \Ec$ is equivalent to
\[
	P'''(x) \geq 0 \quad \text{ for all $x \geq 0$.}
\]
The geometric interpretation of this requirement should be that the potential $P$ is at most \emph{double-welled} (see Figure \ref{fig:doublewell}). 
Examples of potentials in this class include: 
\begin{itemize}
	\item The Ising model (in fact, this is the only measure in $\Ec$ not as in \eqref{eq:Pdef}) 
	\item The $\vp^4$ model, and more generally any even polynomial $P$ with non-negative coefficients of degree four and higher.
	\item The square well potential $\exp(-P(x))\d x \propto \i_{\left[ -1,1 \right]}$.
\end{itemize}
The class $\Ec$ was introduced in \cite{ellis1976ghs} and it is also the class of $P$ for which the Lebowitz/GHS inequalities hold under the potential $P + \alpha \vp^2$ for every $\alpha \in \R$ \cite[Theorem 2]{newman1976rigorous}. We stress that a major difference compared to the Griffiths-Simon class is that it is \emph{easy to verify} whether a given potential belongs to $\Ec$.
\begin{figure}[h]
	\centering
	\includegraphics[scale=0.6]{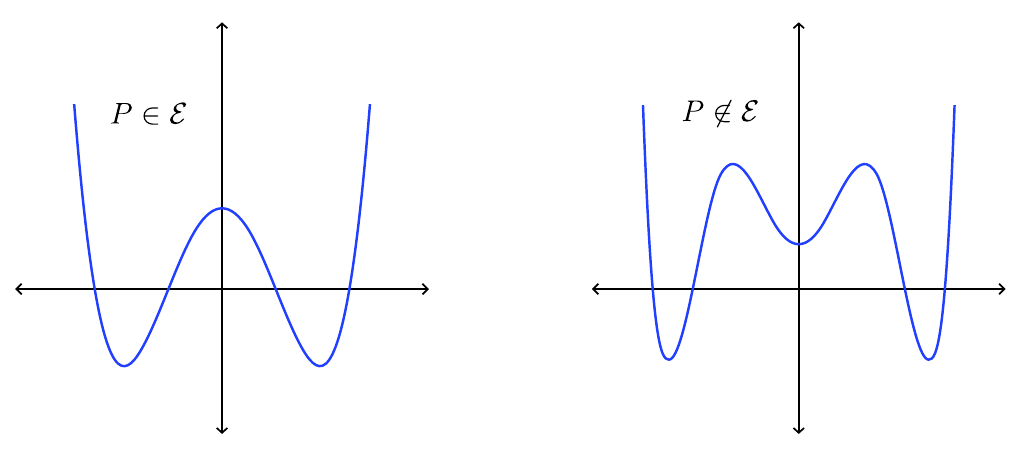}
	\caption{Left: a polynomial belonging to $\Ec$ since it has two wells (i.e.~local minima). Such potentials are expected in broad generality to exhibit behaviour consistent with the Ising model. Right: A polynomial which would not belong to $\Ec$ since it has three wells. Examples of such cases include some $\vp^6$ models (i.e.~when $P$ is a sextic polynomial) which are predicted to lie outside the Ising universality class. }
	\label{fig:doublewell}
\end{figure}
\begin{remark}
	It follows from the result mentioned above that the class $\Ec$ contains considerably many polynomials and other potentials which cannot belong the Griffiths-Simon class. As a concrete example, the potential 
	\[
	P(\vp) = \alpha \cosh(\vp) + \gamma \vp^2,
	\] 
	belongs to the class $\Ec$ for all values of $\alpha > 0$ and $\gamma \in \R$ but not to the GS class \cite{newman1976rigorous}. This model is the lattice version of an important quantum field theory called the sinh-Gordon model~\cite{SinhGordon,guillarmou20262d}.
\end{remark}

\subsection{Setup and result. }Fix $\Gamma$ to be an infinite (locally finite) transitive graph, a vertex $o \in \G$ and $J : \G \times \G \to \left[0,\infty\right)$ a family of translation-invariant coupling constants with
\begin{equation}\label{eq:Jasum}
	\sum_{x \in \G} J_{ox} < \infty.
\end{equation} 
We say that $J$ is \emph{finite-range} if there exists $L \geq 1$ such that $J_{xy}$ vanishes whenever $d(x,y) \geq L$.
We also fix an exhaustion of $\G$ by finite graphs $(G_n)_{n \geq 1}$ containing $o$.  
\begin{asum}\label{asum:two}
	We assume that $\G$ and $J$ are such that the weak limit
	\[
	\left< \cdot \right>_{\G,\beta,h} = \lim_{n \to \infty}  \left< \cdot \right>_{G_n,\beta,h} 
	\] 
	is well-defined, translation-invariant and
	\[
		\left< |\vp_o|^2 \right>_{\G,\beta,h} < \infty,
	\]
	for every $\beta \geq 0$, $h \geq 0$ and $P$ satisfying Assumption \ref{asum:one}.
\end{asum}
This assumption is known to be true when $\G = \Z^d$ for $d \geq 1$ \cite{ruelle1976probability, lebowitz1976statistical} and was recently extended to general $\G$ under a further decay assumption on $J$ \cite{panagiotis2026regularity}. 
The limit
\[
	\left< \cdot \right>^{+}_\beta = \lim_{h \downarrow 0}  \left< \cdot \right>_{\G,\beta,h} 
\] 
then exists by a standard monotonicity argument.
We define the \emph{susceptibility} to be
\[
	\chi(\beta) = \sum_{x \in \G} \left< \vp_o \vp_x \right>^+_{\beta}
\] 
which may or may not be finite. 
\begin{theorem}[]\label{thm:sharpness}
	Let $P$ belong the class $\Ec$. Then, there exists $\beta_c \in (0,\infty]$ such that
	\begin{itemize}
		\item (Disordered phase) if $\beta < \beta_c$, then there is finite susceptibility $\chi(\beta) < \infty$. Moreover, if $J$ is finite-range then there exists $c = c(\beta) > 0$ such that
			\[
			\left< \vp_o \vp_x \right>_{\beta}^+ \leq \exp( - c \, d(0,x)).
			\] 
		\item (Ordered phase) there is $c > 0$ such that for all $\beta \geq \beta_c$, the susceptibility is infinite and
			\begin{equation}\label{eq:exponent}
			\left< \vp_o \right>_{\beta}^+ \geq c  \,\frac{\beta - \beta_c}{\beta}.
			\end{equation} 
	\end{itemize}
\end{theorem}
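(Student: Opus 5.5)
We follow the Duminil-Copin--Tassion scheme \cite{DCT}, with the double random current replaced by the double cluster swapping pair $(\w^+,\w^-)$. For a finite set $S\ni o$ we introduce the quantity
\[
\varphi_\beta(S) \;=\; \beta\sum_{x\in S,\,y\notin S} J_{xy}\,\big\langle \vp_o\vp_x\big\rangle_{S,\beta},
\]
and define $\tilde\beta_c := \sup\{\beta : \exists\, S\ni o \text{ finite with } \varphi_\beta(S)<1\}$. We then prove the two items with $\tilde\beta_c$ in place of $\beta_c$. The definition of $\tilde\beta_c$ may need an extra weight such as $\langle\vp_x^2\rangle$ or a factor coming from the single-site measure; we fix this once the key inequality is available.

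\textbf{Subcritical part.} The first step is a Simon--Lieb type inequality for $\Ec$ potentials:
\[
\langle\vp_o\vp_z\rangle_{\G} \le C_\l\sum_{x\in S,\,y\notin S}\beta J_{xy}\langle\vp_o\vp_x\rangle_S\,\langle\vp_y\vp_z\rangle_{\G}.
\]
Such an inequality is known for the GS class via Lieb/Rivasseau. To prove it here we use the percolation interpretation of the truncated two-point function: $\langle\vp_o\vp_z\rangle$ should be expressed through the connection event $\{o\leftrightarrow z\}$ in the pair $(\w^+,\w^-)$, coupled with $\vp^\pm$. We then explore the cluster from $z$ up to the first edge $xy$ leaving $S$. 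At that edge we use FKG for $(|\vp^+|,-|\vp^-|,\w^+,-\w^-)$, which is valid in $\Ec$, to compare the inner cluster with the model restricted to $S$. This yields the above with a constant related to moments of $\l$.

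Iterating the inequality over translates of $S$ gives exponential decay in the finite-range case. In general, summing it gives $\chi<\infty$.

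\textbf{Supercritical part.} We need a differential inequality for the magnetisation $M(\beta,h)=\langle\vp_o\rangle_{\G,\beta,h}$, namely
\[
\frac{\partial M}{\partial\beta}\ \ge\ \frac{c}{\beta}\,\Big(\inf_{S\ni o}\varphi_\beta(S)\Big)\,(1-M)\quad\text{or a suitable analogue,}
\]
on finite volumes with a ghost vertex representing the field $h$. This follows \cite{DCT}. We write $\partial_\beta M$ as a sum of truncated correlations $\langle\vp_o;\vp_x\vp_y\rangle$, which are represented by $\w^\pm$ connection events. We then condition on the set $\mathcal S$ of vertices not connected to the ghost in $\w^+$. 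The spatial Markov property of the coupled representation together with FKG/monotonicity (conditioning on a cluster boundary yields a free-boundary model on $\mathcal S$, dominating below) gives the factor $\varphi_\beta(\mathcal S)$. Here it is crucial that the GHS inequality holds for $\Ec$ \cite{ellis1976ghs}, so that $M$ is concave in $h$ and the free-boundary comparison goes in the right direction.

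Integrating the differential inequality gives \eqref{eq:exponent} for $\beta>\tilde\beta_c$, hence $\beta_c\le\tilde\beta_c$. Combined with the first part, this gives $\beta_c=\tilde\beta_c$, and infinite susceptibility at $\tilde\beta_c$ follows by the usual argument that $\varphi_\beta(S)\ge1$ for all $S$ forces $\chi=\infty$. Positivity of $\beta_c$ comes from $\varphi_\beta(\{o\})<1$ for small $\beta$.

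\textbf{Main obstacle.} The hardest step is the Lieb-type and conditioning inequalities. Unlike Ising, the spins are unbounded, the two-point function is not a pure connection probability, and conditioning on a cluster only fixes $\mathrm{sgn}(\vp^\pm)$, not the magnitudes. The first thing to try is to condition on $(|\vp^+|,|\vp^-|)$, which reduces the problem to an Ising-type FK model with random couplings. FKG for the quadruple then controls the dependence on magnitudes, and Assumption~\ref{asum:one} bounds the constants.
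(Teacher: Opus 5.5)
Your overall architecture ($\varphi_\beta(S)$, the threshold $\widetilde{\beta_c}$ defined by existence of $S$ with $\varphi_\beta(S)<1$, a Simon--Lieb criterion below it, and a differential inequality obtained by conditioning on the set $\S$ of vertices not $\w^+$-connected to the ghost) matches the paper's. Your supercritical conditioning step is essentially its Step~1.

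\textbf{The gap is the subcritical input.} You propose to prove a Simon--Lieb inequality from the representation itself: explore a cluster up to the first edge leaving $S$, then use FKG for $(|\vp^+|,-|\vp^-|,\w^+,-\w^-)$ to compare the inside with $\langle\cdot\rangle_S$. FKG does not give the direction you need.
\begin{itemize}
\item Any conditioning on outside information that is compatible with $o$ being connected to $z$ leaves the inner region coupled to the outside. The coupling runs through unexplored edges, which may be open, and through positive magnitude interactions.
\item Hence the conditional inner law dominates the free one in the order relevant for inner connections. This is exactly why the paper's Step~1 yields a \emph{lower} bound $\geq\frac12\langle\vp_o\vp_u\rangle_{\S,\beta,0}$.
\item The only conditionings that make the inside dominated by the free measure reveal a closed $\w^+$-boundary around it. That is incompatible with $o$ lying in the cluster of $z$.
\item You would also need to decouple the exit edge and the outer connection $y\leftrightarrow z$ from the inner event. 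That is a BK- or switching-type statement, and no positive-association inequality provides it.
\end{itemize}
The paper does not attempt this. It imports the Simon--Lieb inequality, with no extra constant and a ghost term for $h\geq 0$, from \cite[Theorem 6.1]{BFS}; the hypothesis of that theorem is verified for $\Ec$ in \cite{brydges1983random}. That proof rests on the random-walk representation and Gaussian-domination-type bounds, not on double cluster swapping. As it stands, your disordered phase is unproven. A multiplicative constant $C_\lambda$ would actually be harmless, since the linear-exponent argument only needs $\inf_S\varphi_\beta(S)$ bounded below by a positive constant above the threshold.

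\textbf{Two ingredients are missing on the supercritical side.}
\begin{itemize}
\item After using \eqref{eq:corr3}, you get $\partial_\beta\langle\vp_o\rangle_{G_n,\beta,h}=\sum_{u,v}J_{uv}\E^{+/+}_\beta[|\vp^-_o\vp^-_u\vp^+_v|\,\i(o\overset{\w^-}{\longleftrightarrow}u,\,v\overset{\w^+}{\longleftrightarrow}\g)]$. So besides the free-boundary comparison you need $\E^{+/+}_\beta[|\vp^+_v|\mid\S]\geq c$ for $v\notin\S$, uniformly in $n$, $\S$ and $\beta\leq 1/\eps$. Without this the constant degenerates as $n\to\infty$.
\item This is Lemma~\ref{lemma:technical}. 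The paper proves it by showing that $\P(\cdot\mid\S=S)$ stochastically dominates an auxiliary measure $\mu^{(n)}_{\beta;S}$ in the \emph{reversed} order $(\L^+_X,-\L^+_S,-\L^-)$. It then uses uniform moment bounds for the potential $2P$.
\item The factor in the differential inequality is not $1-M$ but $1-2\langle\vp_o\rangle/\langle|\vp_o|\rangle$. It comes from FKG for $(\L^+,\w^+)$ applied to $\P(o\notin\S)$, and it integrates to the linear bound only thanks to the uniform bound of Lemma~\ref{lemma:massive}.
\end{itemize}
Finally, GHS is not what orients the free-boundary comparison; that comes from FKG together with Lemma~\ref{lemma:stochdom}. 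In the paper GHS is used only to show that the boundary error $\varepsilon_\beta(G_n,h)$ tends to $0$.
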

Theorem \ref{thm:sharpness} implies the following more qualitative statement about Gibbs measures for the potential~$P$.
\begin{corollary}
The pure state $\left< \cdot \right>_{\beta}^+$ coincides with the free-boundary limit
\[
\left< \cdot \right>_{\beta}^0 = \lim_{n \to \infty} \left< \cdot \right>_{G_n,\beta,0},
\] 
for $\beta < \beta_c$ but for $\beta > \beta_c$, there is a symmetry breaking meaning that $\left< \cdot \right>_{\beta}^0 \neq \left< \cdot \right>_{\beta}^+.$
\end{corollary}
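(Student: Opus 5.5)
The corollary should follow from Theorem~\ref{thm:sharpness} together with standard monotonicity and symmetry facts. I will treat the two regimes $\beta<\beta_c$ and $\beta>\beta_c$ separately.

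\emph{Symmetry breaking for $\beta>\beta_c$.} Since $\l$ is symmetric and $h=0$, each finite-volume measure $\left<\cdot\right>_{G_n,\beta,0}$ is invariant under $\vp\mapsto-\vp$. Hence $\left<\vp_o\right>_{G_n,\beta,0}=0$ for every $n$. To pass to the limit I use Assumption~\ref{asum:two}: it gives uniform bounds on second moments, which make $\vp_o$ uniformly integrable, so $\left<\vp_o\right>^0_\beta=0$. On the other hand, \eqref{eq:exponent} gives $\left<\vp_o\right>^+_\beta\ge c(\beta-\beta_c)/\beta>0$. Therefore $\left<\cdot\right>^0_\beta\ne\left<\cdot\right>^+_\beta$.

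\emph{Uniqueness for $\beta<\beta_c$.} The plan is to show that the magnetisation vanishes, $\left<\vp_o\right>^+_\beta=0$, and then to conclude that the two states agree.

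First I show $m(\beta):=\left<\vp_o\right>^+_\beta=0$. By the finite-volume GHS inequality (valid for $P\in\Ec$ by \cite{ellis1976ghs}), the map $h\mapsto\left<\vp_o\right>_{G_n,\beta,h}$ is concave on $h\ge0$, so
\[
\left<\vp_o\right>_{G_n,\beta,h}\le h\,\partial_h\left<\vp_o\right>_{G_n,\beta,h}\big|_{h=0}=h\sum_x\left<\vp_o\vp_x\right>_{G_n,\beta,0}.
\]
By Griffiths' inequality the right-hand side is at most $h\,\chi(\beta)$, since free-boundary finite-volume correlations are dominated by plus-state correlations. Letting $n\to\infty$ and then $h\downarrow0$ gives $m(\beta)=0$, because $\chi(\beta)<\infty$ by Theorem~\ref{thm:sharpness}.

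Next I deduce $\left<\cdot\right>^+_\beta=\left<\cdot\right>^0_\beta$. By Griffiths' second inequality, correlations $\left<\vp_A\right>_{G_n,\beta,h}$ are increasing in both $n$ and $h$. Hence $\left<\vp_A\right>^0_\beta\le\left<\vp_A\right>^+_\beta$ for every product $\vp_A$ of spins. For $P\in\Ec$ the measures satisfy the FKG lattice condition, so it suffices to compare increasing functions. The standard route from here is the Lebowitz inequality applied to a coupling of the two measures, the monotone coupling obtained from FKG. The two measures are stochastically ordered, $\left<\cdot\right>^0\preceq\left<\cdot\right>^+$, and they have the same one-point marginal expectation, since $\left<\vp_x\right>^+=0=\left<\vp_x\right>^0$ by translation invariance. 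Under a monotone coupling $\vp^0\le\vp^+$ with $\E[\vp^+_x-\vp^0_x]=0$ at every $x$, the coordinates agree almost surely, so the two measures coincide.

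The main obstacle is this final comparison step. One must check that FKG and the monotone coupling hold for the infinite-volume limits under only Assumption~\ref{asum:two}, with unbounded spins. I would handle this using the uniform second-moment bounds, which should justify taking the limits needed for the coupling.
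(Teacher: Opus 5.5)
Your proposal is correct and follows essentially the same route as the paper. Below $\beta_c$, the GHS concavity bound $\left<\vp_o\right>_{G_n,\beta,h}-\left<\vp_o\right>_{G_n,\beta,0}\le h\,\chi(\beta)$ gives equal magnetisations, and combining this with the stochastic domination $\left<\cdot\right>^0_\beta\preceq\left<\cdot\right>^+_\beta$ (which the paper simply cites from Simon's book) gives equality of the states; above $\beta_c$, the lower bound \eqref{eq:exponent} against the symmetric free state gives symmetry breaking. The one thing to tidy is the justification of that domination: it comes from the FKG/Holley argument for the increasing density $\exp(h\sum_u\vp_u)$ in finite volume, which holds for any ferromagnetic model and not only $P\in\Ec$, followed by weak limits along bounded increasing local functions; neither the Lebowitz inequality nor FKG for the infinite-volume states is actually needed.
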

\begin{proof}
	By Theorem \ref{thm:sharpness}, we need only prove that $\chi(\beta) < \infty$ implies $\left< \cdot  \right>_{\beta}^{+} = \left< \cdot \right>_{\beta}^{0}$. The argument is classical. Since the GHS inequality is valid for $P \in \Ec$ \cite{ellis1976ghs}, we have that
	\[
	\left< \vp_o \right>_{G_n,\beta,h} - \left< \vp_o \right>_{G_n,\beta,0} \leq h \, \chi(\beta).
	\] 
	It follows that if $\chi(\beta) < \infty$ then, after taking the supremum over $n \geq 1$ and sending $h \to 0$, we have
	\begin{equation}
		\left< \vp_o \right>_{\beta}^{+} = \left< \vp_o \right>_{\beta}^{0}.
	\end{equation}
	Combined with the stochastic domination $\left< \cdot \right>_{\beta}^{+} \succeq \left< \cdot \right>_{\beta}^{0}$ \cite[Theorem 2.6.1]{simon2026phase} this is sufficient to prove that $\left< \cdot \right>_{\beta}^{+} = \left< \cdot \right>_{\beta}^{0}$. 
	
\end{proof}
\begin{remark} Again, the statement of the theorem deserves a few remarks. 
	\begin{itemize}	
\item The non-triviality of the phase transition (i.e.~$\beta_c < \infty$) is known to be true under quite general assumptions on the graph \cite{duminil2020existence,easo2025counting} and will not be discussed here. 
\item The exponent $\beta-\beta_c$ in \eqref{eq:exponent} does not, unlike the result of \cite{DCT}, match the mean-field exponent $(\beta-\beta_c)^{1 / 2}$ which is known for the Ising model when $d \geq 4$ \cite{AF} and we would expect to hold for each $P \in \Ec$.
\item Using the same method as in \cite[Lemma 2.8]{DCT}, our results also imply that
	\[
		\left< \vp_o \right>_{\beta_c,h}^{+} \gtrsim h^{1 / 2}
	\] 
	for $h \geq 0$ sufficiently small. Again, in this case the mean-field exponent should be $1 / 3$ rather than $1 / 2$. 
	\end{itemize}
\end{remark}
\subsection{Proof strategy. }We generalise the proof of Duminil-Copin and Tassion \cite{DCT}. The argument has two key inputs:

The first is a \emph{differential inequality} relating the magnetisation to a finitary quantity denoted $\vp_\beta(S)$. 
The inequality for the Ising model in \cite{DCT} (and similar differential inequalities in \cite{ABF}) was proved using the random current and its switching lemma. 
We prove an analogous inequality for any potential $P \in \Ec$ by utilising our new representation instead. The key step is an application of the FKG property of the representation. Even in the case of the Ising model, the argument itself is new. We also need an extra input (Lemma \ref{lemma:technical}) when the spins are continuous to show the constant appearing in the inequality does not degenerate. Whilst this second step may not be a surprising result we feel that the argument (based on stochastic domination) is quite novel.

The other input to the argument is an implication relating $\vp_\beta(S)$ with the model being in a disordered phase. This is proved using (a version of) the \emph{Simon-Lieb inequality} \cite{simon1980correlation,Lieb}, of which there are many different variants (see \cite{simon2026phase} for a historical discussion). 
The one that we need is contained in \cite{BFS,brydges1983random}. 
We stress that the validity of this inequality is a strong indicator that the phase transition is continuous, since it readily implies continuity of the mass gap. As such, this strategy is intrinsically related to the mechanism of \emph{continuous} phase transitions. 

\begin{remark}\label{remark:panagiotis}
	During the writing of this manuscript, Panagiotis and Veitch proved subcritical sharpness for any measure $\l$ (i.e.~without the restriction on $P \in \Ec$) when $J$ is finite-range \cite{panagiotis2026subcritical}. Their argument uses the machinery of the OSSS inequality~\cite{duminil2019sharp}, bypassing the incompatibility of the \cite{DCT} approach with first-order phase transitions. 
\end{remark}





\noindent\emph{Acknowledgements.}
LH and ML were supported by FWF Standalone grant ``Spins, Loops and Fields'' P 36298, and ML was supported by FWF SFB ``Discrete Random Structures'' 10.55776/F1002.

\section{The representation}\label{sec:therep}
\subsection{Summary. }The representation will associate to each spin system $\left< \cdot \right>_{G,\beta,h}$ a pair of coupled percolation models on $G$. The idea will be to start with two independent samples $\vp,\vp'$ and perform the Ginibre rotation leading to a coupled pair $(\vp^+,\vp^-)$.
The crucial observation is then that conditionally on both $|\vpp|$ and $|\vpm|$, the signs
\[
\sgn(\vpp) \quad\text{ and }\quad \sgn(\vpm)
\] 
are \emph{conditionally independent} ferromagnetic Ising models with $|\vp^{\pm}|$-dependent coupling constants.
Thus, they admit conditionally independent FK-Ising representations which we denote by $\w^+$ and $\w^-$.
The two percolations $\w^+$ and $\w^-$ have the property, essentially by construction, that applying the operations 
\[
	\vp^+ \to -\vp^+ \quad \text{ and }\quad \vp^- \to -\vp^-
\]
at every vertex in any cluster of $\w^+$ and $\w^-$ respectively is measure-preserving. 
\\[1em]
\noindent The representation has two particularly relevant cases, the Ising model and the Gaussian free field, which we describe in more detail below. For now, let us briefly mention that in these two cases the percolations $(\w^+,\w^-)$ are vertex-disjoint (for the Ising model) and independent (for the GFF) and this difference in behaviour can be seen as a manifestation of the fermionic and bosonic nature of the spin models (see Remark \ref{remark:wickandpfaff}).

\subsection{The representation. }We start with some notation. 
Let $G = (V,E)$, $J$, $h$ and $\l$ be as in Definition \ref{def:spinmodel}.
Given any subset $S \subset V$, we let $E(S)$ be the subset of edges in $E$ with both endpoints contained in $S$. We will often identify $S$ with the subgraph $(S,E(S))$ of $G$. We define
\[
\left( \vp, J \vp \right)_S = \sum_{xy \in E(S)} J_{xy} \vp_x \vp_y \quad\text{ and }\quad (h,\vp)_S = \sum_{x \in S} h_x \vp_x,
\] 
and write $(\vp,J\vp) = (\vp,J\vp)_V$ and $(h,\vp) = (h,\vp)_V$. 
We let $(\vp,\vp')$ be sampled from
\[
\left< \cdot \right>_{G,\beta,h} \otimes \left< \cdot \right>_{G,\beta,h}.
\]
The joint density of $(\vp,\vp')$ may now be written as
\[
	\exp\big( \beta (\vp,J\vp) + \beta (\vp',J\vp') + (h,\vp) + (h,\vp')\big) \d\l^V(\vp)\d\l^V(\vp'),
\] 
where we use the shorthand $\l^V = \otimes_{u \in V} \l$. 
In order to deal with a non-zero magnetic field, we will use a ghost vertex. We define the extended graph $ \overline{G} = ( \overline{V}, \overline{E})$ where 
\[
			\overline{V} = V \cup \left\{ \g \right\} \quad\text{ and }\quad \overline{E} = E \cup \left\{ v\g \mid v \in V \right\},
\]
and more generally we set $ \overline{S} = S \cup \left\{ \g \right\} $ and $ \overline{E}(S) = E \cup \left\{ v\g \mid v \in S \right\}$ for any $S \subset V$. 
\paragraph{A Ginibre rotation. }
Let $\vp$ and $\vp'$ be as above. 
We set
\begin{equation}\label{eq:repdef}
	\vp^+ = \frac{\vp + \vp'}{2} \quad\text{ and }\quad \vp^- = \frac{\vp - \vp'}{2},
\end{equation} 
which are the two coupled spin systems $\vp^+,\vp^-\in \R^V$ and their joint density may be written as 
\begin{equation}\label{eq:rotden}
	\exp( 2\beta (\vpp,J\vpp) + 2\beta (\vpm,J\vpm) + 2(h,\vpp)) \d\l^V(\vpp + \vpm)  \d\l^V(\vpp -  \vpm).
\end{equation} 
Note that the measure on $(\vpp,\vpm) \in \R^2$ with density $\d\l(\vpp + \vpm) \,\d\l(\vpp - \vpm)$ is invariant under each of the three maps:
\[
	(\vpp,\vpm) \to (-\vpp,\vpm), \quad (\vpp,\vpm) \to (\vpp,-\vpm), \quad (\vpp,\vpm) \to (\vpm,\vpp).
\] 
These are all simple consequences of the fact that $\l$ is symmetric. 

\paragraph{Two coupled FK-representations. }
From now on, we will set 
\[
\L^+ = |\vpp| \quad\text{ and }\quad \L^- = |\vpm|.
\]
The density \eqref{eq:rotden} relative to $\l^V(\vpp+\vpm)\otimes\l^V(\vpp-\vpm)$ can be written as
\[
	\exp\Big( 2\beta \sum_{xy \in E} J_{xy} \sgn(\vpp_x) \sgn(\vpp_y) \Lp_x \Lp_y + 2h\sum_{x \in V}h\sgn(\vp_x) \Lp_x + 2\beta \sum_{xy \in E} \sgn(\vpm_x) \sgn(\vpm_y) \Lm_x \Lm_y)\Big) 
\] 
By the symmetries of $\l(\vpp + \vpm) \otimes \l(\vpp - \vpm)$ discussed above, it is clear that the law of $\sgn(\vpp)$ and $\sgn(\vpm)$ conditional on $\Lp$ and $\Lm$ is that of two independent and ferromagnetic Ising models. Therefore they both admit typical (conditional) FK expansions which we now construct. 
\begin{definition}[]\label{def:def}
	Let $\P_{G,\beta,h}$ be the probability measure on $\R^V \times \R^V \times \left\{ 0,1 \right\}^{ \overline{E}} \times \left\{ 0,1 \right\}^E$ defining the law of $(\vpp,\vpm,\w^+,\w^-)$ in the following construction. 
	\begin{enumerate}[(1)]
	\item Sample $\vp$ and $\vp'$ from $\left< \cdot \right>_{G,\beta,h} \otimes \left< \cdot \right>_{G,\beta,h}$ and then define $\vp^+$ and $\vp^-$ as above. 
	\item We now construct the percolation model $\w^+ \in \left\{ 0,1 \right\}^{ \overline{E}}$. For each $uv \in E$, we set $\w^+_{uv} = 0$ (i.e.~close the edge) if $\vpp_u\vpp_v \leq 0$ and otherwise set $\w^+_{uv} = 1$ (i.e.~open the edge) with probability
		\[
			1 - \exp(-4\beta \xi^+_u \xi^+_v).
		\] 
		We also set $\w^+_{u\g} = 0$ if $\vpp_u \leq 0$ and otherwise set $\w^+_{u\g} = 1$ with probability $1-\exp(-4h\xi^+_u)$ for each $u \in V$. 
	\item We construct $\w^- \in \left\{ 0,1 \right\}^E$ via a similar procedure. That is, we open $\w^-_{uv}$ on an edge $uv \in E$ if and only if $\vp^-_u\vp^-_v > 0$ and then with probability $1 - \exp(-4\beta\xi^-_u\xi^-_v)$.
	\end{enumerate}
	The additional randomness used to determine if an edge of $\w^+$ or $\w^-$ is opened are all independent from one another.
\end{definition}
\noindent We will use the notation $\P^{+ / +}_\beta = \P_{G,\beta,h}$ and write $\P^{0 / 0}_\beta = \P_{G,\beta,0}$ for when there is no magnetic field. 
Note that an edge $\w^\pm_{uv}$ is $\P_{G,\beta,h}$-a.s.~only open if $\vp^\pm_u\vp^\pm_v > 0$. That is, the sign of $\vp^+$ and $\vp^-$ is constant on each cluster of $\w^+$ and $\w^-$ respectively, and moreover the sign of $\vp^+$ is always positive on the cluster of the ghost in $\w^+$.

The central property of this representation is the following.
\begin{proposition}[]\label{prop:signflip}
	Conditionally on $(\L^+,\L^-,\w^+,\w^-) \sim \P_{\beta}^{+ / +}$, the law of $(\vp^+,\vp^-)$ is invariant under flipping the sign of $\vp^\pm$ on any cluster of $\w^\pm$ (except for the cluster of the ghost in $\w^+$ for which the sign of $\vp^+$ is always fixed to be $+1$).
\end{proposition}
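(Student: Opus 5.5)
Write the joint law of $(\vpp,\vpm,\w^+,\w^-)$ explicitly, in the spirit of the Edwards--Sokal coupling, and check invariance directly on that density. We write $\sigma^\pm=\sgn(\vp^\pm)$, so that $\vp^\pm=\sigma^\pm\L^\pm$; on the null set where some $\vp^\pm_u=0$ the sign is irrelevant. The base measure $\d\l^V(\vpp+\vpm)\,\d\l^V(\vpp-\vpm)$ is a product over vertices. At each vertex it is invariant under $\vpp_u\mapsto-\vpp_u$ and, separately, under $\vpm_u\mapsto-\vpm_u$, by the three symmetries noted after \eqref{eq:rotden}. Flipping signs on an arbitrary vertex set $A$ (for $\vp^+$) and $B$ (for $\vp^-$) therefore preserves the base measure and leaves $(\L^+,\L^-)$ unchanged.

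Next, I combine the Boltzmann weight \eqref{eq:rotden} with the edge probabilities of Definition \ref{def:def}. For each $uv\in E$ I use the identity
\[
e^{2\beta J\sigma_u\sigma_v\L_u\L_v}=e^{-2\beta J\L_u\L_v}\Big(e^{-4\beta J\L_u\L_v}\mathbf 1_{\w_{uv}=0}+(1-e^{-4\beta J\L_u\L_v})\mathbf 1_{\w_{uv}=1}\mathbf 1_{\sigma_u=\sigma_v}\Big)
\]
and the analogous identity for the ghost edges, with $2h\sigma_u\L^+_u$ and $\sigma_\g:=+1$. I will take the constants in Definition \ref{def:def} to match this, inserting $J_{uv}$ where it is omitted. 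The joint density of $(\vpp,\vpm,\w^+,\w^-)$ then equals the base measure times
\[
F(\L^+,\L^-,\w^+,\w^-)\prod_{uv:\w^+_{uv}=1}\mathbf 1_{\sigma^+_u=\sigma^+_v}\prod_{uv:\w^-_{uv}=1}\mathbf 1_{\sigma^-_u=\sigma^-_v},
\]
where $F$ depends only on the absolute values and on the edge configurations, and the first product includes the ghost edges.

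Summing this expression over $\w^\pm$ recovers \eqref{eq:rotden}. Conditionally on $\vp^\pm$, the edges are independent with exactly the probabilities prescribed in Definition \ref{def:def}, namely opened with probability $1-e^{-4\beta J\L_u\L_v}$ when $\sigma_u=\sigma_v$ and closed otherwise. So this density is indeed that of $\P^{+/+}_\beta$. Checking the constant conventions here is the only real work.

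Invariance now follows. Fix $(\L^\pm,\w^\pm)$ and let $C$ be a cluster of $\w^+$ not containing $\g$, or a cluster of $\w^-$. Flipping $\sigma^+$ (respectively $\sigma^-$) on all of $C$ preserves every indicator $\mathbf 1_{\sigma_u=\sigma_v}$ over open edges, since such edges lie either inside $C$ or entirely outside it. The flip leaves $F$ untouched and preserves the base measure. The map is an involution, so the conditional law of $(\vpp,\vpm)$ given $(\L^+,\L^-,\w^+,\w^-)$ is invariant. For the ghost cluster, the open ghost edges force $\sigma^+=+1=\sigma_\g$, so the sign there is fixed.
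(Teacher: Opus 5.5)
Your proof is correct and follows the paper's argument, just made explicit: the paper simply cites the Edwards--Sokal/FK coupling for the two conditionally independent Ising sign fields given $(\L^+,\L^-)$, whereas you write out the joint density and verify the cluster-flip invariance by hand, with the vertex-wise flip symmetry of the base measure supplying the conditional-independence input.

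There are two slips, neither affecting invariance. First, the prefactor in your edge identity should be $e^{+2\beta J_{uv}\L_u\L_v}$; as written, the $\w_{uv}$-summed right-hand side is off by $e^{4\beta J_{uv}\L_u\L_v}$, which depends only on $\L$ and so cannot affect invariance, but it would prevent you from recovering \eqref{eq:rotden}. Second, $\{\vp^\pm_u=0\}$ is not a null set in general (at Ising-type vertices one of $\vpp_u,\vpm_u$ always vanishes). This is harmless, since such a vertex is isolated in $\w^\pm$ and flipping $0$ does nothing.
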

\begin{proof}
	As explained, conditional on $(\Lp,\Lm)$ the law of $\sgn(\vp^\pm)$ is coupled to $\w^\pm$ respectively exactly as in the FK-representation of the Ising model \cite{FK}. In particular, the conditional law of $\sgn(\vp^\pm)$ under $(\Lp,\Lm,\w^\pm)$ is invariant under flipping the sign on any (non-ghost) cluster of $\w^\pm$. Since they are conditionally independent, this extends to the joint law of both $\sgn(\vpp)$ and $\sgn(\vpm)$ under $(\Lp,\Lm,\w^+,\w^-)$. 
\end{proof}

\begin{remark}
	Since the map $\vp^- \to -\vp^-$ is equivalent to exchanging $\vp$ and $\vp'$, we see that the law of $(\vp,\vp')$ is invariant under swapping $\vp$ and $\vp'$ on any cluster of $\w^-$. It is for this property (termed `cluster swapping') that Sheffield introduced $\w^-$ in \cite{She}.
\end{remark}

\noindent 
 \begin{remark}\label{remark:extensions}
	The coupling can be extended in many ways and one possibility is to couple different values of the magnetic field. Indeed, we may take
		\[
					(\vp , \vp') \sim \left< \cdot \right>_{G,\beta,h} \otimes \left< \cdot \right>_{G,\beta,0},
		\]
		where $h>0$ and proceed as before. We leave of the details of defining the coupling, which we denote by $\P^{+ / 0}_\beta$, to the reader other than to note that in this case both $\w^+$ and $\w^-$ live on the extended graph and the sign of $\vp^\pm$ is always $+1$ on the $\w^\pm$ cluster of the ghost.
\end{remark}
\paragraph{The $\I$ function. }
Although not strictly necessary for the definition of the representation, we now introduce some notation which will be useful in Part \ref{part:2} and also help with developing an intuition for the behaviour for different choices of $P$.
Suppose that $P$ is a continuous potential on an interval $\left[ -A,A \right]$ for some $0 < A \leq \infty$, with $P = \infty$ outside $\left[ -A,A \right]$. In this case, we denote by
\[
	\l_2 = \l^{(2P)}
\] 
and make the following important definition.	
\begin{definition}[]
	We define the function $\I : \R \times \R \to \R \cup \left\{ \infty \right\} $ given by 
\begin{equation}\label{eq:Idef}
	\I(x,y) = 
\begin{cases}
	P(x+y) + P(x-y) - 2 P(x) -  2 P(y) \quad & \text{ if } |x| + |y| \leq A,\\ 
	\infty \quad & \text{ otherwise,}
\end{cases}
\end{equation} 
so that{\hypersetup{linkcolor=black}\footnote{The choice to write the measure relative to $\d\l_2 \otimes \d\l_2$ rather than $\d\l \otimes \d\l$ is arbitrary for now but will be useful later on.}}
\[
	(\d\l \otimes \d\l)(\vp,\vp') = \exp( -\I(\vpp,\vpm)) (\d\l_2 \otimes \d\l_2)(\vpp,\vpm).
\]
\end{definition}
\noindent The important properties of $\I$ are that it is symmetric in $x$ and $y$ and only depends on $|x|$ and $|y|$. Thus, \eqref{eq:rotden} may be rewritten as 
\begin{equation}\label{eq:rotden2}
	\exp( 2\beta (\vpp,J\vpp) + 2\beta (\vpm,J\vpm) + (2h,\vpp)) \exp\big( -\textstyle\sum_{u \in V} \I(|\vpp_u|,|\vpm_u|)\big) \d\l_2^G(\vpp) \d\l_2^G(\vpm).
\end{equation}
We view $\I$ as \emph{capturing all of the interaction} between $\vpp$ and $\vpm$ in \eqref{eq:rotden2}.
In particular, one can think of $(\vpp,\vpm)$ as two initially independent spin systems which are coupled together at each site $u \in V$ by the value of $\I(|\vpp_u|,|\vpm_u|)$. 
\subsection{Correlation functions.}
As a first consequence, we have expressions for the standard correlation functions, 
\begin{equation}\label{eq:corr1}
	\left< \vp_x \vp_y \right>_{\beta, h} = 2\E^{+ / +}_\beta [ \Lp_x \Lp_y \i( x \overset{\w^+}{\longleftrightarrow} y) ],
\end{equation} 
where as usual $x \overset{\w^\pm}{\longleftrightarrow} y$ means that $x$ and $y$ belong to the same connected component of $\w^\pm$, and similar expression for higher order correlations.
The real advantage is that we may also express \emph{truncated} correlations, such as 
\begin{equation}\label{eq:corr2}
	\left< \vp_x \vp_y \right>_{\beta,h} - \left< \vp_x \right>_{\beta,h} \left< \vp_y \right>_{\beta,h} = 2\E^{+ / +}_\beta[ \Lm_x \Lm_y \i(x \overset{\w^-}{\longleftrightarrow} y) ] 
\end{equation}
for $x,y \in V$. 
Another example (which we will use later) is
\begin{equation}\label{eq:corr3}
	\left< \vp_x \vp_u \vp_v \right>_{\beta,h} - \left< \vp_x \right>_{\beta,h} \left<\vp_u \vp_v \right>_{\beta,h} = 2\E^{+ / +}_\beta[ \Lm_x\Lm_u\Lp_v \i( x \overset{\w^-}{\longleftrightarrow} u, v \overset{\w^+}{\longleftrightarrow} \g )  ] + (u \leftrightarrow  v),
\end{equation}
for $x,u,v \in V$, where the last term denotes the previous term with the roles of $u$ and $v$ swapped.
One can also compute differences in correlations between two measures (using Remark \ref{remark:extensions}), such as
\begin{equation}\label{eq:corr4}
	\left< \vp_x \vp_y \right>_{\beta,h} - \left< \vp_x \vp_y \right>_{\beta,0} = 2 \E^{+ / 0}_\beta[ \Lp_x\Lm_y \i(x \overset{\w^+}{\longleftrightarrow} \g, y \overset{\w^-}{\longleftrightarrow} \g) ].
\end{equation} 
All these expression are reminiscent of similar identities for Ising correlations in terms of random currents.
\begin{proof}[Proof of \eqref{eq:corr1}--\eqref{eq:corr4}]
	Each may be derived in the same manner by first reexpressing the left-hand side in terms of joint correlations for $\vp$ and $\vp'$, and then in terms of $\vpp$ and $\vpm$ before applying Proposition \ref{prop:signflip}.
	As one example,
	\begin{equation}\label{eq:corrdev}
		\left< \vp_x \vp_y \right>_{\beta,h}^{} - \left< \vp_x \right>_{\beta,h}^{} \left< \vp_y \right>_{\beta,h}^{}  = 2\E^{+ / +}_\beta[ \vp^-_x \vp^-_y ] 
		= 2\E^{+ / +}_\beta[ \Lm_x\Lm_y \i(x \overset{\w^-}{\longleftrightarrow} y) ],
	\end{equation}
	where the second equality is just \eqref{eq:repdef} and the final inequality is Proposition \ref{prop:signflip}, which gives exactly \eqref{eq:corr2}. Similarly, we can derive \eqref{eq:corr3} by using the identity
	\[
		\vp_x( \vp_u \vp_v - \vp'_u\vp'_v) = 2(\vpp_x + \vpm_x)(\vpm_u\vpp_v + \vpp_u\vpm_v),
	\]
	noting that any terms with just one $\vpm$ factor may be dropped since $(\vpp,\vpm)$ and $(\vpp,-\vpm)$ are equidistributed under $\P^{+ / +}_\beta$ and then proceeding as in \eqref{eq:corrdev}. 
\end{proof}
\noindent\emph{Example: The Ising model.} In fact, the identities are of interest even in this case. 
Let $\sigma$ and $\sigma'$ (instead of $\vp$ and $\vp'$) be two independent samples from an Ising model. Then, we can alternatively write $\vp^\pm_u = (\sigma_u \pm \sigma'_u) / 2$ as
\begin{equation}\label{eq:taurole}
\vp^\pm_u = \sigma_u \i( \tau_u = \pm 1), 
\end{equation} 
where $\tau = \sigma \sigma'$ is the product, which is the so-called XOR-Ising model. In particular,
\[
1 - \exp(-4\beta \xi^\pm_u \xi^\pm_v) = \begin{cases}
	1 - \exp(-4\beta) \quad & \text{ if }\tau_u = \tau_v = \pm 1\\
	0 \quad & \text{ otherwise,}
\end{cases}
\]
and $\vp^\pm_u\vp^\pm_v > 0$ if and only if both $\tau_u = \tau_v = \pm 1$ and $\sigma_u = \sigma_v$.
Therefore, Definition \ref{def:def} coincides with the definition of $\w$ in \cite[Definition 2.6]{AHL}.
It follows from \eqref{eq:taurole} that if $\l$ is supported on $\left\{ \pm 1 \right\}$, then always either
\begin{equation}\label{eq:isingext}
	\vp^+_x = 0 \quad \text{ or }\quad \vp^-_x = 0,
\end{equation} 
for every vertex $x \in V$. This leads to a great simplification in the identities above since either $|\vp_x^\pm|$ is exactly $1$ or $x$ is incident to no open edge of $\w^\pm$. Moreover, no vertex is incident to an edge of both $\w^+$ and $\w^-$ so we may treat $\w^+$ and $\w^-$ as a partition of the connected components of a single percolation model $\w = \w^+ \cup \w^-$. Using this, \eqref{eq:corr1} reduces to
\[
	\left< \sigma_x \sigma_y \right>_{\beta,0} = \P^{0 / 0}_\beta\big(x \overset{\w}{\longleftrightarrow} y\big), 
\] 
and more generally $\w$ shares the same Edwards-Sokal property as the FK-Ising model \cite[Theorem 2.7]{AHL}. Similarly, \eqref{eq:corr2} gives a simple expression for the truncated correlation{\hypersetup{linkcolor=black}\footnote{One which is arguably simpler than the random current representation for the same quantity where the relevant (dis)connection event is under a measure which depends on $x$ and $y$}} 
\begin{equation}\label{eq:isingtrunc}
	\left< \sigma_x; \sigma_y \right>_{\beta,h} = 2\P^{+ / +}_\beta\big(  x \overset{\w^-}{\longleftrightarrow} y \big),
\end{equation}
We refer to \cite{AHL} for a greater discussion on this case.
In the planar setting, it turns out that the law of $\w$ is dual to the \emph{double random current} \cite[Proposition 2.8]{AHL}, 
and it is closely related to many representations of the six-vertex model \cite{GlaPel,Lis22b,glazman2025delocalisation}. 
The percolation $\w^-$ and the identities \eqref{eq:corr1} and \eqref{eq:corr2} appeared in the disagreement percolation approach of \cite{aizenman2020exponential}.
\\[1em]
\noindent\emph{Example: The $\vp^4$ model.} Another important example is when $P$ is a quartic even polynomial
\[
P(\vp) = \alpha \vp^2 + g \vp^4
\] 
with $g > 0$. In this case, the function $\I$ is given by 
\begin{equation}\label{eq:phi4I}
\I(x,y) = 12g x^2y^2.
\end{equation} 
It is not hard to guess that because of \eqref{eq:phi4I}, the reweighting by $\exp(-\I)$ in \eqref{eq:rotden2} will lead to the pair $\Lp$ and $\Lm$ being negatively associated with each other. This is a special case of Proposition \ref{prop:FKG} which holds for any potential $P \in \Ec$. We stress that this representation is distinct to those appearing in \cite{GPPS,GPPS2}.

In the degenerate example where $P$ is only quadratic, the function $\I$ vanishes and there is no interaction between $\vpp$ and $\vpm$ owing to the fact that the rotation of two independent Gaussians preserves said independence. The $\vp^4$ model thus interpolates between two extremal cases of Proposition~\ref{prop:FKG}: the Gaussian limit where $\vp^+$ and $\vp^-$ are independent and the Ising limit with the hard-core constraint \eqref{eq:isingext}.

\subsection{The FKG inequality.} The FKG inequality is an important correlation inequality which has come to play a major role in statistical mechanics. The usefulness of Definition~\ref{def:def} stems in part from the FKG inequality stated below. Proposition \ref{prop:FKG} was first proved for the Ising model in \cite[Proposition 3.1]{AHL}. The proof of Proposition \ref{prop:FKG} may be found in Section \ref{sec:corrineq} and is based on the ideas of \cite[Theorem 2.8]{LamOtt}. 
\begin{proposition}[]\label{prop:FKG}
	Let $P \in \Ec$. Then, the quadruple 
	\[
		(\L^+,-\L^-,\w^+,-\w^-)
	\] 
	satisfies the FKG inequality under $\P_{G,\beta, h}$. In particular, $\w^+$ and $\w^-$ are negatively associated with each other. 
\end{proposition}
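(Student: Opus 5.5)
We will prove Proposition~\ref{prop:FKG} by verifying the FKG lattice condition (Holley's criterion) for the joint law of $(\L^+,-\L^-,\w^+,-\w^-)$, first under a discretisation and then by passing to a weak limit. Throughout we take $P$ of the form \eqref{eq:Pdef}, since the Ising case and other weak limits follow by closedness of FKG under weak convergence. First we compute the joint law. We integrate out the signs of $(\vpp,\vpm)$ using Proposition~\ref{prop:signflip}, exactly as in the Edwards--Sokal computation, starting from \eqref{eq:rotden2}. This gives a density for $(\Lp,\Lm,\w^+,\w^-)$ relative to $\prod_u \d\l_2(\Lp_u)\d\l_2(\Lm_u)\otimes$ counting measure, proportional to
\[
\prod_{uv:\w^+_{uv}=1}\!\big(e^{4\beta J_{uv}\Lp_u\Lp_v}-1\big)\prod_{uv:\w^-_{uv}=1}\!\big(e^{4\beta J_{uv}\Lm_u\Lm_v}-1\big)\prod_{u:\w^+_{u\g}=1}\!\big(e^{4h\Lp_u}-1\big)\, 2^{k(\w^+)+k(\w^-)}\, e^{-\sum_u \I(\Lp_u,\Lm_u)},
\]
up to harmless factors $e^{-2\beta\sum J\Lp\Lp}$-type exponentials. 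Here $k$ counts non-ghost clusters.

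We then check the lattice condition factor by factor, discretising $\Lp,\Lm$ to a finite grid so that the state space is a finite distributive lattice under the order that is increasing in $\Lp,\w^+$ and decreasing in $\Lm,\w^-$. Since $\log$-supermodularity is preserved under products, it suffices to check each factor. The edge weight $(e^{4\beta J xy}-1)^{\omega}$ is log-supermodular in $(x,y,\omega)$ on $[0,\infty)^2\times\{0,1\}$, because $\log(e^{axy}-1)$ is supermodular in $(x,y)$ and increasing. The same holds for $-\Lm,-\w^-$ after reversing all three coordinates. The factor $2^{k(\w)}$ is the standard FK cluster weight with $q=2\ge1$, which is log-supermodular in $\w$, and also in $-\w$ since reversing the order preserves the lattice condition. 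The vanishing constraint (edges may only be open when $\xi>0$) is compatible with this. The $\w^+$ and $\w^-$ factors do not interact, so the only coupling between the plus and minus sides comes from $\I$.

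The key step, which we expect to be the main obstacle, is showing that $(x,y)\mapsto -\I(x,-y)$ is supermodular on the relevant domain. Equivalently, $\I(x,y)$ must be supermodular in $(x,y)\in[0,A]^2$ (with the value $\infty$ outside $|x|+|y|\le A$, which gives a decreasing ``down-set'' constraint compatible with the lattice order $(x,-y)$). Concretely, we need $\partial_x\partial_y\I\ge0$ in the smooth case. Writing $\I(x,y)=P(x+y)+P(x-y)-2P(x)-2P(y)$, this reduces to
\[
\partial_x\partial_y \I = P''(x+y)-P''(x-y)\ge 0 \quad\text{for } x,y\ge0,
\]
with $P''(t)=Cg'(|t|)$. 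Since $g$ is convex, $g'$ is nondecreasing on $[0,A]$, and $|x+y|\ge|x-y|$, the inequality follows. For non-smooth convex $g$ we use the integrated form: we check $\I(x',y')+\I(x,y)\ge \I(x',y)+\I(x,y')$ for $x\le x'$, $y\le y'$ via a double integral of $g'$ increments, or by mollifying $g$. The support constraint must also be checked: the set $\{x+y\le A\}$ is a lattice in the order on $(x,-y)$, since it is closed under coordinatewise max of $x$ and min of $y$ whenever both points lie in the set. This needs a short case check, which may be the most delicate point. Once the lattice condition holds on each grid, Holley/FKG gives the inequality for the discretised measures. Letting the mesh go to zero, the discretised measures converge weakly, and continuity of increasing local functions then yields the claim. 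Negative association of $\w^+$ and $\w^-$ is the special case of increasing functions depending only on $\w^+$ and on $-\w^-$.
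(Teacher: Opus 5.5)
The gap is the sentence calling the quadratic exponentials ``harmless''. With the Edwards--Sokal weights written as you write them, the exact joint density is your product times $\exp\big(-2\beta(\Lp,J\Lp)-2\beta(\Lm,J\Lm)-2(h,\Lp)\big)$. The quadratic factors have mixed second derivatives $-2\beta J_{uv}\le 0$, so they are log-\emph{sub}modular in $\Lp$ and in $\Lm$; reversing the order on a whole block does not change this. They cannot be absorbed into the edge factors either, since on $\{\w^+\equiv 0\}$ there are no open-edge factors at all.

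Concretely, let $f$ be the joint density. Fix an edge $uv$ with $\beta J_{uv}>0$ and values $0<s<t$. Take two configurations that coincide (same $\Lm$, same $\w^-$, and $\w^+\equiv 0$) except that $(\Lp_u,\Lp_v)=(s,t)$ in the first and $(t,s)$ in the second. All $\I$-, field-, $2^{k}$- and other-edge terms are constant or single-coordinate functions of this pair, so they cancel, leaving
\[
\frac{f(\text{join})\,f(\text{meet})}{f(\text{first})\,f(\text{second})}=e^{-2\beta J_{uv}(t-s)^2}<1 .
\]
So the joint law of $(\Lp,-\Lm,\w^+,-\w^-)$ does \emph{not} satisfy the FKG lattice condition, on any grid containing two positive values. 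No factor-by-factor Holley argument can therefore prove the proposition: positive association is true, but the lattice condition for the quadruple is false.

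This is why the paper uses the chain rule instead. Summing out $\w^\pm$, the density of $(\Lp,\Lm)$ is proportional to $\mathcal{Z}^+(\Lp)\,\mathcal{Z}^-(\Lm)\,e^{-\sum_u\I(\Lp_u,\Lm_u)}$. Here $\mathcal{Z}^+(\Lp)=Z^{\overline{G}}(\Lp)\,e^{2\beta(\Lp,J\Lp)+2(h,\Lp)}$ is the partition function of an Ising model with couplings $2\beta J_{uv}\Lp_u\Lp_v$ and fields $2h\Lp_u$, and $\mathcal{Z}^-$ is the same with $h=0$.

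The log-supermodularity of $\mathcal{Z}^+$ in $\Lp$ is the genuine input. The paper refers to Lammers--Ott; it also follows from Griffiths' second inequality, since the mixed partials of $\log\mathcal{Z}^+$ are nonnegative combinations of $\langle\sigma_a\sigma_c\rangle$ and truncated correlations $\langle\sigma_A;\sigma_B\rangle$. Note that it is the product, i.e.\ the paper's Terms~1 and~2 together, that has this property. $Z^G$ alone does not: for a single edge $Z^G=2+2e^{-4\beta J_{uv}\L_u\L_v}$, which is the same sign issue you ran into.

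Your computation $\partial_x\partial_y\I=P''(x+y)-P''(x-y)\ge 0$ and your sublattice check for $\{|x|+|y|\le A\}$ are correct. They are exactly the paper's Term~3 via Lemma~\ref{lemma:Iprops}, and together with the above they give FKG for $(\Lp,-\Lm)$.

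Finally, given $(\Lp,\Lm)$, $\w^+$ and $\w^-$ are independent FK-Ising configurations with edge probabilities $1-e^{-4\beta J_{uv}\L^\pm_u\L^\pm_v}$ (and $1-e^{-4h\Lp_u}$ on ghost edges). Each satisfies FKG and is stochastically increasing in $\Lp$, respectively $\Lm$. So for increasing $F,G$ one gets $\E[FG]\ge\E\big[\E[F\mid\Lp,\Lm]\,\E[G\mid\Lp,\Lm]\big]\ge\E[F]\,\E[G]$.
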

See Section \ref{sec:corrineq} for a precise statement and introduction to the theory of the FKG inequality. The content of Proposition \ref{prop:FKG} is that individually $(\L^+,\w^+)$ and $(\L^-,\w^-)$ are positively associated, but they are \emph{negatively associated} with each other{\hypersetup{linkcolor=black}\footnote{We point out that quite differently to the rotated system, the absolute value of just a single unrotated spin $|\vp|$ with $\vp \sim \left< \cdot \right>_{G,\beta,h,P}$ satisfies the FKG inequality for any potential $P$.}}.
We also record an immediate corollary of this result, which may be of independent interest.
\begin{corollary}[]
	Let $P \in \Ec$ and $(\vp,\vp') \sim \left< \cdot \right>_{G,\beta,h} \otimes \left< \cdot  \right>_{G,\beta ,h}$. Then, the pointwise product
	\[
	\vp \times \vp' \in \R^V
	\] 
	satisfies the FKG inequality.
\end{corollary}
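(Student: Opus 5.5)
The plan is to express the product through the Ginibre rotation and reduce to Proposition~\ref{prop:FKG}. Since $\vp = \vpp + \vpm$ and $\vp' = \vpp - \vpm$, we have, for every $u \in V$,
\[
\vp_u \vp'_u = (\vpp_u)^2 - (\vpm_u)^2 = (\Lp_u)^2 - (\Lm_u)^2 .
\]
Thus $\vp \times \vp'$ is a deterministic function $F(\Lp,\Lm)$ of the pair $(\Lp,\Lm)$. Each coordinate $F_u(\Lp,\Lm) = (\Lp_u)^2 - (\Lm_u)^2$ depends only on $(\Lp_u,\Lm_u)$. Since $\Lp,\Lm \geq 0$, it is coordinatewise nondecreasing in $\Lp_u$ and nonincreasing in $\Lm_u$. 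Equivalently, each $F_u$ is nondecreasing as a function of the quadruple coordinates $(\Lp,-\Lm)$.

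Now take two increasing functions $f,g$ on $\R^V$. The compositions $f\circ F$ and $g \circ F$ are increasing functions of $(\Lp,-\Lm)$, because a composition of increasing maps is increasing. Regarded as functions of the full quadruple $(\Lp,-\Lm,\w^+,-\w^-)$, they simply do not depend on the percolation coordinates, so they remain increasing in the partial order used in Proposition~\ref{prop:FKG}. That proposition, applied under $\P_{G,\beta,h}$ (which is exactly the law obtained from $(\vp,\vp') \sim \left< \cdot \right>_{G,\beta,h}^{\otimes 2}$), then gives
\[
\E[f(\vp\times\vp')\,g(\vp\times\vp')] \geq \E[f(\vp\times\vp')]\,\E[g(\vp\times\vp')].
\]
This is the claimed FKG inequality.

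The only point needing care is the technical one: the class of test functions in the precise FKG statement of Section~\ref{sec:corrineq}, which may require bounded or integrable functions, or a lattice structure on the mixed continuous/discrete space. I would handle this by restricting to bounded increasing $f,g$ and noting that the marginal on $(\Lp,\Lm)$ already inherits FKG. Integrability is not an issue for bounded test functions, and the extension to unbounded square-integrable increasing functions follows by truncation together with Assumption~\ref{asum:one}. I expect no genuine obstacle: the substance lies entirely in Proposition~\ref{prop:FKG}, and this corollary is the observation that $\vp_u\vp'_u = (\Lp_u)^2 - (\Lm_u)^2$ is monotone in the right directions.
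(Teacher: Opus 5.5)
Your proof is correct and is essentially the paper's own argument: write each coordinate of the product as $\vp_u\vp'_u = (\Lp_u)^2 - (\Lm_u)^2$, observe that this is increasing in $(\Lp,-\Lm)$, and invoke Proposition~\ref{prop:FKG}. Your identity is the right one; the paper writes $\vp\vp'/4$ on the left-hand side, a harmless stray factor that has no effect on monotonicity.
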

\begin{proof}
	We have $\vp\vp'/4  =  |\vpp|^2 - |\vpm|^2$, which is an increasing function of the pair $(|\vpp|,-|\vpm|)$.
\end{proof}
\noindent As a first application of this FKG inequality, we have that
\begin{equation}\label{eq:lebowpm1}
\E[ \L^+_x \L^+_y \L^-_z \L^-_w \i(x \overset{\w^+}{\longleftrightarrow} y, z \overset{\w^-}{\longleftrightarrow} w) ] \leq \E[ \L^+_x \L^+_y \i(x \overset{\w^+}{\longleftrightarrow} y )] \E [ \L^-_z \L^-_w \i(z \overset{\w^-}{\longleftrightarrow} w) ],
\end{equation} 
or equivalently,
\begin{equation}\label{eq:lebowpm}
\E[ \vp^+_x \vp^+_y \vp^-_z \vp^-_w ] \leq \E[ \vp^+_x \vp^+_y ] \E[ \vp^-_z \vp^-_w ].
\end{equation} 
for any $x,y,z,w \in V$. Rearranging \eqref{eq:lebowpm} gives 
\begin{equation}\label{eq:lebow}
	\left< \vp_x \vp_y \vp_z \vp_w \right> \leq \left< \vp_x \vp_y  \right> \left< \vp_z \vp_w \right> + \left<\vp_x \vp_z \right> \left<\vp_y \vp_w \right> + \left<\vp_x \vp_w \right>\left<\vp_y \vp_z \right>
\end{equation}
which is the classical \emph{Lebowitz inequality}. This is equivalent to the \emph{GHS inequality}, i.e.~the concavity of the map 
\[
h \mapsto \left< \vp_u \right>_{G,\beta,h}
\] 
for $h \geq 0$ and $u \in G$. 
A proof of \eqref{eq:lebowpm} and so the Lebowitz inequality for $P \in \Ec$ 
can also be deduced by other means, see \cite{simon2026phase} for a historic overview.
As mentioned above, the converse to this result is that if the Lebowitz inequality holds for the potential $P + \alpha \vp^2$ for all $\alpha \in \R$ then $P \in \Ec$ \cite[Theorem 2]{newman1976rigorous}. 
Many other correlation inequalities, for instance
\[
\left< \vp_x \vp_y \right>_{\beta,h}^{} - \left< \vp_x \vp_y \right>_{\beta,0}^{} \leq \left< \vp_x \right>_{\beta,h} \left< \vp_y  \right>_{\beta,h}^{} \quad \text{ for }h \geq 0 \text{ and }P \in \Ec,
\] 
can be proven using double cluster swapping. One can also prove the MMS inequalities \cite{MMS} using a variant of the representation where $\vp \sim \left< \cdot \right>_{G,\beta,0}^{}$ and $\vp'$ is not sampled independently but instead defined by $\vp'_u = \vp_{Ru}$ for a suitable `reflection' $R$. The rest of the argument is left to the reader. 
\begin{remark}
	The definition and results of this section may be easily generalised to the case where the potential $P = P_v$ or the magnetic field $h = h_v$ varies across the vertices of $v \in V$. For instance, Proposition~\ref{prop:FKG} continues to hold so long as each $P_v \in \Ec$ and $h_v \geq 0$.
\end{remark}

\subsection{The representation in infinite-volume. }In this work we will only utilise this representation on a finite graph yet we briefly note there are simple infinite-volume versions of these identities which may shed light on various aspects of the phase transition. As one example, 
\begin{equation}\label{eq:infvolcor}
	\left< \vp_x \vp_y \right>^{+} -\left< \vp_x \vp_y \right>^{0} = 2 \E^{+ / 0}_\beta[ \L^+_x \L^-_y \i(  x \overset{\w^+}{\longleftrightarrow} \infty , \, y \overset{\w^-}{\longleftrightarrow} \infty   )].
\end{equation} 
The vanishing of the left hand side has been proved for the Ising and $\vp^4$ models at any temperature and on any transitive amenable graph of polynomial growth \cites{Rao17,GPPS}, leading to the statement that
\[
	\left< \cdot \right>^0_\beta = \tfrac{1}{2}\left< \cdot \right>^+_\beta + \tfrac{1}{2}\left< \cdot \right>^{-}_\beta.
\]
We expect this to hold more generally for any potential $P \in \Ec$, which in view of \eqref{eq:infvolcor} would corresponds to a \emph{non-coexistence result} for the two negatively associated percolation processes $\w^+$ and $\w^-$. 
So far we were not able to prove this.

\part{Pfaffian relations}\label{part:1} 

We adopt the setup in Section \ref{sec:app2} and, in particular, denote by $ \left< \cdot \right>_{G} $ the spin model defined in Definition \ref{def:spinmodel} with no external field and with reference measures $(\l_v)_{v\in V}$, and by $\P$ the representation defined in Section \ref{sec:therep}. We make the harmless assumption that $J_{uv} > 0$ for any $uv \in E$.
For any subset $A \subset V$ and any $\vp \in \R^V$, we define $\vp(A) = \prod_{v \in A} \vp_v$.
The starting point for this section is the following identity (a simple generalisation of \eqref{eq:corr2}): For any two subsets of vertices $A,B \subset V$, we have
\begin{equation}\label{eq:corrpm}
\E[ \vpp(A) \vpm(B) ] = \E [ \L^+(A) \L^-(B) \i( \F^+_A\cap  \F^-_B) ],
\end{equation}
where $ \F^\pm_A $ is the event that the connected components of the percolation $\w^\pm$ induce an even partition of the vertices of $A$. 

\subsection{An algebraic characterisation. }We start with a result of \cite{Lis22a}. Let $W$ be a subset of vertices equipped with an ordering. We imagine placing the vertices of $W$ on the boundary of the unit disk in a cyclic manner with respect to this order. A pairing (i.e.~a partition into sets of size two) of an even subset $W$ may be represented by continuous simple curves contained in the disk (called arcs) which connected the paired vertices. Let $(A,B)$ be a pair of disjoint even subsets $A,B \subset W$. It is said to be \emph{planar} if there exists a pairing of $A$ and of $B$ such that no arc of the first pairing intersects an arc of the second pairing, and otherwise is said to be \emph{non-planar}.  

\begin{lemma} \label{lem:Marcin_Pfaf}
    A spin model $\left< \cdot \right>_{G}$ satisfies Pfaffian relations 
    on $W \subset V$ if and only if for each non-planar pair $(A,B)$, we have
    \begin{equation}\label{eq:ABeq}
	    \E [ \vp^+(A) \vp^-(B) ] = 0.
    \end{equation}
\end{lemma}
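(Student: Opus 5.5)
The plan is to reduce both sides of the equivalence to statements about the ordinary moments $\langle \vp(C)\rangle_G$, $C\subseteq W$, and then to compare with the Pfaffians $\Pf(M_C)$ through a purely algebraic identity. Fix disjoint even sets $A,B\subseteq W$ and put $C=A\cup B$. Since $\vp^\pm=(\vp\pm\vp')/2$ with $\vp,\vp'$ independent copies, expanding the products gives
\[
2^{|C|}\,\E[\vp^+(A)\vp^-(B)] \;=\; \sum_{S\subseteq C} (-1)^{|S\cap B|}\,\langle \vp(C\setminus S)\rangle_G\,\langle \vp(S)\rangle_G .
\]
The right-hand side is a quadratic expression in the moments. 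Call it $Q_{A,B}(\langle\cdot\rangle)$, and for a skew-symmetric matrix $M$ write $Q_{A,B}(\Pf)$ for the same expression with every moment $\langle\vp(X)\rangle$ replaced by $\Pf(M_X)$. Odd moments vanish by symmetry of the $\l_v$, so only even $S$ contribute.

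\emph{Key algebraic step (main obstacle).} I need the identity $Q_{A,B}(\Pf)=0$ for every skew-symmetric $M$ and every non-planar pair $(A,B)$. A Grassmann integral by itself does not immediately give this: the Grassmann analogue of $\E[\vp^+(A)\vp^-(B)]$ factorises into roughly $\Pf(M_A)\Pf(M_B)$, which is nonzero. The planarity condition must therefore enter through the reordering signs that distinguish commuting from anticommuting products.

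So I would argue combinatorially. Expand $\Pf(M_{C\setminus S})\Pf(M_S)$ over pairings. A pair of pairings of $C\setminus S$ and of $S$ is the same thing as a pairing $\pi$ of $C$ together with a subset $T\subseteq\pi$ of its arcs, namely those lying in $S$. The sign of $\pi$ differs from the product of the two partial signs by $(-1)^{\mathrm{cr}(T,\pi\setminus T)}$, where $\mathrm{cr}$ counts crossings between the two families of arcs. Hence the coefficient of $\prod_{\{i,j\}\in\pi}M_{ij}$ in $Q_{A,B}(\Pf)$ is
\[
(-1)^{\mathrm{cr}(\pi)}\sum_{T\subseteq \pi}\ \prod_{p\in T}(-1)^{|p\cap B|}\prod_{p\in T,\,q\notin T}(-1)^{\mathrm{cr}(p,q)} .
\]
It then remains to show that this vanishes, or cancels after summing over $\pi$, whenever $(A,B)$ is non-planar. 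I would first try a sign-reversing involution. Take a suitable arc of $\pi$, for instance one joining $A$ to $B$, or an arc crossing the minimal interleaving of $A$ and $B$, and toggle its membership in $T$, possibly combined with a local re-pairing of $\pi$. If this resists, I would fall back on the result of \cite{Lis22a}, which is exactly this statement phrased for fermionic correlations.

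\emph{The two directions.}

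($\Rightarrow$) If the Pfaffian relations hold, then every moment on $W$ equals the corresponding Pfaffian, so $Q_{A,B}(\langle\cdot\rangle)=Q_{A,B}(\Pf)=0$ for non-planar $(A,B)$. This is \eqref{eq:ABeq}.

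($\Leftarrow$) I would induct on $|C|$ over even $C\subseteq W$. The cases $|C|\le 2$ hold by the definition of $M$. Suppose $|C|\ge4$, and order $C=\{c_1<c_2<\dots\}$ cyclically. Then $A=\{c_1,c_3\}$, $B=\{c_2,c_4\}$, with the remaining points of $C$ added in adjacent pairs to $A$, gives a non-planar pair; the interleaved quadruple already forces a crossing. In $Q_{A,B}$ the terms $S=\emptyset$ and $S=C$ contribute $(1+(-1)^{|B|})\langle\vp(C)\rangle=2\langle\vp(C)\rangle$, and every other term involves only moments of strictly smaller sets. Both the true moments (by hypothesis) and the Pfaffians (by the key step) satisfy $Q_{A,B}=0$. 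By induction they agree on all smaller sets, so subtracting the two equations gives $\langle\vp(C)\rangle=\Pf(M_C)$.
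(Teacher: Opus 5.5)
Your proposal is correct and is essentially the paper's proof. You expand $\E[\vp^+(A)\vp^-(B)]$ into the signed quadratic sum of moments, cite \cite[Corollary 9]{Lis22a} for the vanishing of the same sum with $\Pf(M_\cdot)$ in place of moments when $(A,B)$ is non-planar, and induct on $|A\cup B|$ via a non-planar pair in which the top-order moment appears with coefficient $2$; you just spell out the induction the paper only sketches.

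Incidentally, your own attempt at the algebraic step would close without the citation. The sum over $T$ factorises as $\prod_{p\in\pi}\bigl(1+(-1)^{|p\cap B|+d_p}\bigr)$, with $d_p$ the number of arcs of $\pi$ crossing $p$. So toggling any single arc with $|p\cap B|+d_p$ odd is exactly your sign-reversing involution. A parity count of positions along the cyclic order shows that such an arc exists in every pairing $\pi$ precisely when $(A,B)$ is non-planar.
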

\begin{proof}
	Recall that for any subset $I \subset W$, $M_I$ is the minor of the $n \times n$ matrix $M_{ij} = \left< \vp_{v_i} \vp_{v_j} \right>$ where $v_1,\ldots,v_n$ are the (ordered) vertices of $W$. It is an algebraic property of Pfaffians that for any non-planar subset $(A,B)$, the sum
	\begin{equation}\label{eq:Pfinduc}
		\sum_{ I \subseteq A \cup B} (-1)^{|I \cap A|} \Pf(M_{I}) \Pf(M_{( A \cup B) \setminus I})
	\end{equation} 
	vanishes \cite[Corollary 9]{Lis22a}. One can argue by induction that equations \eqref{eq:Pfinduc} recursively generate higher and higher-order Pfaffians so that the relations are equivalent to 
	\begin{equation}\label{eq:Spinduc}
		\sum_{ I \subseteq A \cup B} (-1)^{|I \cap A|} \left<\vp(I)  \right>_G \left<\vp((A \cup B) \setminus I) \right>_G = 0
	\end{equation} 
	for each non-planar subset $(A,B)$. The equations \eqref{eq:Spinduc} may be simply rewritten as 
	\[
	\E[ \vp^+(A) \vp^-(B) ] = 0
	\]
	for each non-planar subset $(A,B)$, proving the equivalence. 
\end{proof}
\begin{remark}\label{remark:minimal}
A weaker sufficient condition for Pfaffian relations (that we will not use) is the vanishing of \eqref{eq:ABeq} for the family of non-planar pairs $(A,B)$ with $|A| = 2$, or, more generally, any family of non-planar pairs $(A,B)$ large enough to contain at least one pair $(A,B)$ with $A \cup B = W'$ for each subset $W' \subset W$. 
\end{remark}
We may also obtain a characterisation solely in terms of $\w^+$ and $\w^-$ which will be more useful in understanding the geometry of $G$.
\begin{corollary} \label{cor:pfaff}
    A classical spin model satisfies Pfaffian relations on $W \subset V$ if and only if for every non-planar pair $(A,B)$, 
    \begin{equation}\label{eq:pfaffchar}
        \mathbf{P}(\F_A^+ \cap \F^-_{B}) = 0. 
    \end{equation}
\end{corollary}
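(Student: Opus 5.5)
By Lemma~\ref{lem:Marcin_Pfaf}, the Pfaffian relations on $W$ are equivalent to the vanishing of $\E[\vp^+(A)\vp^-(B)]$ for every non-planar pair $(A,B)$. It therefore suffices to show that, for each fixed non-planar pair $(A,B)$,
\[
\E[\vp^+(A)\vp^-(B)] = 0 \iff \P(\F^+_A\cap\F^-_B)=0 .
\]
The bridge between the two sides is identity~\eqref{eq:corrpm}:
\[
\E[\vp^+(A)\vp^-(B)] = \E\big[\L^+(A)\L^-(B)\i(\F^+_A\cap\F^-_B)\big].
\]
The right-hand side is the expectation of a nonnegative random variable. Hence it vanishes if and only if
\[
\L^+(A)\L^-(B)\i(\F^+_A\cap\F^-_B)=0 \quad \text{almost surely.}
\]

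The implication ``$\Leftarrow$'' is then immediate. If $\P(\F^+_A\cap\F^-_B)=0$, the integrand vanishes almost surely, so the expectation is zero.

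For ``$\Rightarrow$'', suppose the expectation vanishes. Then almost surely on $\F^+_A\cap\F^-_B$ some $\L^+_a$ with $a\in A$ or some $\L^-_b$ with $b\in B$ equals zero. We must rule out that $\F^+_A\cap\F^-_B$ has positive probability while carried entirely by such degenerate configurations. Here the structure of the construction is used. If $\L^+_a=|\vp^+_a|=0$, then every $\w^+$-edge at $a$ is closed almost surely, since opening requires $\vp^+_a\vp^+_u>0$. So $a$ is an isolated vertex of $\w^+$, and its $\w^+$-cluster meets $A$ only in $\{a\}$. That is an odd block, so $\F^+_A$ fails. The same argument with $\w^-$ shows that $\L^-_b=0$ for some $b\in B$ forces $\F^-_B$ to fail. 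Consequently, on $\F^+_A\cap\F^-_B$ all the factors $\L^+_a$, $a\in A$, and $\L^-_b$, $b\in B$, are almost surely strictly positive. Thus the integrand is strictly positive on this event, and vanishing of the expectation forces $\P(\F^+_A\cap\F^-_B)=0$.

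The only delicate point is the degeneracy issue in the ``$\Rightarrow$'' direction: the weights $\L^\pm$ may vanish, in particular for Ising-type vertices, where one of $\vp^\pm_x$ is always zero by~\eqref{eq:isingext}. The isolation argument above handles this in a way that does not depend on $\l_v$. Two further remarks complete the plan. First, $A$ and $B$ are disjoint, so the $\w^+$ and $\w^-$ conditions involve separate factors and do not interact in the argument. Second, if $A$ or $B$ is empty, the corresponding event is trivially satisfied and the factor equals $1$. Combining the equivalence for each pair with Lemma~\ref{lem:Marcin_Pfaf} proves the corollary.
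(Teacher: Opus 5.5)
Your proposal is correct and follows the paper's proof essentially step for step. You reduce via Lemma~\ref{lem:Marcin_Pfaf} and identity~\eqref{eq:corrpm} to the expectation of a nonnegative integrand, note that one direction is trivial, and for the other use that on $\F^+_A\cap\F^-_B$ no vertex of $A$ (resp.\ $B$) can be $\w^+$- (resp.\ $\w^-$-) isolated, which forces $\L^+_a>0$ and $\L^-_b>0$; your explicit check that $\L^\pm_v=0$ closes every incident $\w^\pm$-edge is exactly the point the paper calls ``rather immediate''.
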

\begin{proof}
	Using \eqref{eq:corrpm} and the previous lemma, we need only show that 
	\[
		\P \left( \F_A^+ \cap \F_{B}^- \right) > 0 \implies \E\left[ \L^+(A) \L^-(B) \i\left( \F^+_A \cap \F^-_B \right)  \right] > 0
	\] 
	since the converse is clear.	However, this is rather immediate given that the event $\w^\pm \in \F_A$ requires that each vertex $v \in A$ is not an isolated component of $\w^\pm$ and so must have $\L^\pm_v > 0$. 
\end{proof}
\noindent\emph{Example: the Ising model. }We may use Corollary \ref{cor:pfaff} to easily prove the relations in the context of the planar Ising model. There are just two key observations:
\begin{itemize}
	\item 
Let $G$ be planar and suppose that each potential $P_v$ is of Ising type, then the percolations $\w^+$ and $\w^-$ \emph{cannot cross each other.} Indeed, any vertex $v \in V$ with $ |\vp^\pm_v| = 0$ can have no incident open edges in $\w^\pm$ and yet it must satisfy \eqref{eq:isingext}.
	\item 
Suppose now that $W$ is genuinely a subset of the boundary vertices on the planar graph $G$ (i.e.~each vertex of $W$ is incident to a common face) equipped with its cyclic ordering. Then, almost by definition, for any non-planar pair $(A,B)$, the event 
\[
	\F^+_A \cap \F^-_{B}
\] 
topologically forces $\w^+$ and $\w^-$ to cross each other and so must have probability zero under $\P$, ensuring \eqref{eq:pfaffchar}. 
\end{itemize}
An argument along these lines (except using a random current representation) appeared in \cite{Lis22a}.
\begin{remark}\label{remark:wickandpfaff}
	The bosonic counterpart to the Pfaffian relations is \emph{Wick's rule}:
	\[
		\Big< \prod_{i \in I} \vp_i \Big>_{G} = \textrm{Hf}(M_I) \quad \text{ for every $I \subset W$},
	\] 
	where $\textrm{Hf}$ is the \emph{hafnian}, which has the same diagrammatic expansions as the Pfaffian (e.g.~Figure \ref{fig:fourpt}) except each term is now unsigned. 
	It is not difficult to see that there is a similar condition to Lemma \ref{lem:Marcin_Pfaf}: Wick's rule is satisfied if and only if
	\[
		\E [ \vp^+(A) \vp^-({B}) ]  = \E [ \vp^+(A) ] \E[ \vp^-(B) ] 
	\] 
	for every two disjoint subsets $A,B$ of $V$. This relation clearly holds when $\vp$ is the Gaussian free field (e.g.~when $P$ is a quadratic even polynomial) as it is a classical fact that $\vpp$ and $\vpm$ are then independent. 
	Let us now consider the class $\Ec$ introduced in Section \ref{sec:app2}. Under the assumption $P \in \Ec$, we have that (arguing similarly to \eqref{eq:lebowpm1})
	\[
		0 \leq \E [ \vp^+(A) \vp^-({B}) ] \leq \E[ \vp^+(A) ] \E[ \vp^-({B}) ].
	\] 
	Interestingly, we see that the two saturating cases of these inequalities can be viewed as arising from the Pfaffian and Wickian relations for fermions and bosons respectively. 
\end{remark}

\subsection{A geometric characterisation. }
Let $\Vis$ be the set of vertices $v \in V$ which are of Ising type, i.e.~the reference measure $\l$ is uniform on $\left\{ -s,s \right\}$ for some $s > 0$.

\begin{definition}\label{def:Wcross}
    Let $G = (V, E)$ be a finite graph, with $W = (v_1, \ldots, v_{n})$ a tuple of distinct vertices in $V$, and let $A\subset V$. 
    We say that $G$ satisfies the $(W,A)$-crossing property if for every $1 \leq i < j < k < l \leq n$, each pair of paths in $G$ connecting $v_i$ with $v_k$ and $v_j$ with $v_l$ must intersect at a vertex of $A$. 
\end{definition}
We say that $G$ satisfies the $W$-crossing property if it satisfies the $(W,V)$-crossing property.

\begin{lemma}\label{lemma:keylemma}
    The spin model $\left< \cdot \right>_G$ satisfies Pfaffian relations on $W = \{v_1,v_2,\ldots, v_n \}$ if and only if $G$ satisfies the $(W,\Vis)$-crossing property.
\end{lemma}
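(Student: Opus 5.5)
By Corollary~\ref{cor:pfaff}, the Pfaffian relations on $W$ are equivalent to $\P(\F^+_A\cap\F^-_B)=0$ for every non-planar pair $(A,B)$. It therefore suffices to show that this vanishing holds for all non-planar $(A,B)$ if and only if $G$ has the $(W,\Vis)$-crossing property. The whole argument rests on one local fact: a vertex $v$ can be non-isolated in both $\w^+$ and $\w^-$ with positive probability if and only if $v\notin\Vis$.

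\emph{Local fact.} If $v\in\Vis$, then \eqref{eq:isingext} gives $\vp^+_v=0$ or $\vp^-_v=0$ almost surely. Since an edge of $\w^\pm$ at $v$ can only be open when $\vp^\pm_v\neq 0$, no vertex of $\Vis$ is incident to open edges of both percolations. Conversely, suppose $\l_v$ is symmetric, non-trivial and not of the form $\tfrac12(\delta_s+\delta_{-s})$. Then its support contains two values $a,b$ with $|a|\neq|b|$ (allowing $b=0$), so $(\vp_v,\vp'_v)=(a,b)$ up to neighbourhoods has positive probability, and hence $|\vp^+_v|>0$ and $|\vp^-_v|>0$ with positive probability. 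If $\l_v=\delta_0$ is excluded, we still need care with neighbourhoods of points; I would work with small intervals and use the continuity of the edge probabilities $1-e^{-4\beta J\xi\xi}$.

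\emph{Crossing property $\Rightarrow$ Pfaffian.} Take a non-planar pair $(A,B)$ and a configuration in $\F^+_A\cap\F^-_B$. The standard combinatorial fact here is that, for any pairings of $A$ and $B$ induced by the clusters, non-planarity forces some arc $v_iv_k$ of the $A$-pairing to interlace some arc $v_jv_l$ of the $B$-pairing, so that $i<j<k<l$ up to relabelling. Proving this cleanly from the definition of non-planar (which quantifies over \emph{all} pairings of $A$ and $B$) is one point to check. The cluster partition need not be a pairing, but any even partition refines to a pairing realised by connections inside clusters. This yields an $\w^+$-open path from $v_i$ to $v_k$ and an $\w^-$-open path from $v_j$ to $v_l$. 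By the crossing property they share a vertex $u\in\Vis$, and $u$ is then incident to open edges of both percolations; if $u$ is an endpoint, it still carries open edges of both percolations. This contradicts the local fact, so the event has probability $0$.

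\emph{Pfaffian $\Rightarrow$ crossing property.} Suppose there are $i<j<k<l$ and paths $\gamma_1\colon v_i\to v_k$, $\gamma_2\colon v_j\to v_l$ in $G$ whose common vertices all lie outside $\Vis$. We may assume both paths are simple. Set $A=\{v_i,v_k\}$ and $B=\{v_j,v_l\}$; this pair is non-planar. I would show $\P(\w^+\supseteq\gamma_1,\ \w^-\supseteq\gamma_2)>0$, which lies inside $\F^+_A\cap\F^-_B$ provided no other vertex of $A\cup B$ spoils the parity. Since $A,B$ have size two, the event only asks for the connections $v_i\leftrightarrow v_k$ and $v_j\leftrightarrow v_l$, so this is automatic. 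To get positive probability, I would prescribe a positive-density set of values: on $\gamma_1\setminus\gamma_2$ take $\vp^+>0$ bounded away from zero (possible since $\l_v\neq\delta_0$ lets $\vp=\vp'$ be nonzero); on $\gamma_2\setminus\gamma_1$ take $\vp^->0$; and on the intersection take both positive, using the local fact at non-Ising vertices. The remaining vertices are unconstrained. The Gibbs density with respect to the product reference measure is strictly positive, and each edge opens with conditional probability bounded below. This is the main obstacle: turning these pointwise prescriptions into a positive-measure set under $\l^V\otimes\l^V$ with atoms or general supports, especially choosing sign and modulus consistently at intersection vertices. The resulting positive probability contradicts Corollary~\ref{cor:pfaff}.
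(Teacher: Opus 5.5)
Your proposal is correct and follows essentially the paper's route. The paper proves the Pfaffian $\Rightarrow$ crossing direction in the same way, by showing that interlaced paths are simultaneously open in $\w^+$ and $\w^-$ with positive probability, and leaves the converse as ``clear''; your argument via \eqref{eq:isingext}, refining cluster partitions to pairings, and noting that non-interlacing pairings can be drawn with disjoint straight chords, fills that in correctly. The step you flag as the main obstacle is routine: by symmetry of $\l_v$ take $\vp_v$ near $|a|$ and $\vp'_v$ near $b$ with $|a|>|b|$ to get both $\vp^\pm_v>0$, then use positivity of the Gibbs density and edge-opening probabilities bounded below (using $J_{uv}>0$).
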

\begin{proof}
    Fix $1 \leq i < j < k < l \leq n$ as in the lemma. 
    Suppose there exists a pair of paths $\gamma, \tilde \gamma$ connecting $v_i$ to $v_k$ and $v_j$ to $v_l$ respectively, which do not intersect at a vertex of $V_{\mathrm{Ising}}$. 
    We claim that in this case
    \begin{equation}\label{eq:nonvanish}
	    \E [ \vp^+_{v_i}\vp^+_{v_k} \vp^-_{v_j} \vp^-_{v_l} ] > 0, 
    \end{equation}
    which contradicts the Pfaffian relations by Lemma \ref{lem:Marcin_Pfaf}, since the pair $(\left\{v_i,v_k \right\},\left\{ v_j,v_l \right\})$ is non-planar.
   To see this is true, notice that as long as such paths exist, 
    the probability that $\gamma \subseteq \omega^-$ and $\tilde \gamma \subseteq \omega^+$ is strictly non-zero (conditional on the values of spins outside the paths, the probability that the paths are open in $\omega^+$ and $\omega^-$ is not zero). 
    Using \eqref{eq:corrpm}, we have
    \[
	    \E\big [ \vp^+_{v_i}\vp^+_{v_k} \vp^-_{v_j} \vp^-_{v_l} \big]
        = \E \big[ |\vp^+_{v_i}\vp^+_{v_k} \vp^-_{v_j} \vp^-_{v_k} 
        |\id (v_i \con{\omega^+} v_k,v_j \con{\omega^-} v_l )  \big], 
    \]
    which is strictly positive by the above observation. The equivalence should now be clear. 
\end{proof}
\begin{remark}
	Note that the proof of Lemma \ref{lemma:keylemma} only used Lemma \ref{cor:pfaff} for the case of non-planar pairs $(A,B)$ with $|A| = |B| = 2$. This immediately proves Corollary \ref{cor:thm1}. In general, this is of course not a sufficient algebraic characterisation (see Remark \ref{remark:minimal}) so it is somewhat surprising that in this setting of spin models it is. 
\end{remark}

\begin{figure}[H]
	\centering
	\includegraphics[scale=0.6]{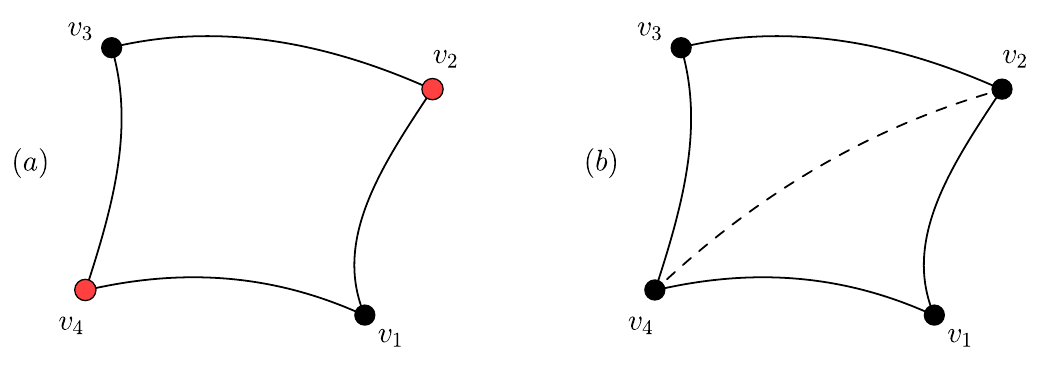}
	\caption{}
	\label{fig:counterexample}
\end{figure}

\begin{remark}\label{remark:expexamp}
	We now give a curious example showing that it must not necessarily be the case that each vertex in $W$ is of Ising type. 
	Suppose that $G$ is as depicted in Figure \ref{fig:counterexample} (a).
	We suppose that the two vertices in black are of Ising type, but the potentials at the two in red are unrestricted. Then, the only non-trivial Pfaffian relation on $\left\{ v_1,v_2,v_3,v_4 \right\}$, namely 
\[
	\left<\vp_{v_1} \vp_{v_2} \vp_{v_3} \vp_{v_4} \right>_G = \left<\vp_{v_1} \vp_{v_2} \right>_G \left< \vp_{v_3} \vp_{v_4} \right>_G + \left< \vp_{v_1} \vp_{v_4} \right>_G \left< \vp_{v_2} \vp_{v_3} \right>_G - \left< \vp_{v_1} \vp_{v_3} \right>_G \left<\vp_{v_2} \vp_{v_4} \right>_G.
\] 
	is equivalent to (using Lemma \ref{lemma:keylemma}) the property that any two paths, one from $v_1$ to $v_3$ and one from $v_2$ to $v_4$, must intersect at a vertex of Ising type (i.e.~a black vertex). This is certainly the case and so no matter the potential on $v_2$ and $v_4$ the Pfaffian relations are satisfied. 
	If $G$ is instead the graph depicted in Figure \ref{fig:counterexample} (b) then it is essential that both $v_2$ and $v_4$ are of Ising type for the relations to hold.
\end{remark}
\section{Graph reductions}\label{sec:graphred}
Let $G = (V, E)$ be a finite connected graph with a distinguished subset $W$ of vertices equipped with some ordering. 
A path in $G$ is a finite sequence of vertices in $V$ with the property that each two 
consecutive vertices are either the same vertex or are connected by an edge in $E$.
Let $S$ be a nonempty subset of $V$. 
We say that $S$ separates one subset $A \subset V$ from another subset  $B \subset V$ (or $S$ is an \emph{$(A, B)$-separator}) in $G$ 
if every path connecting $A$ and $B$ in $G$ contains a vertex of $S$. 
We call $S$ a \emph{reduction set} (for $G$) if:
\begin{itemize}
    \item $|S|\leq 3$,
    \item $S$ separates $W$ from a nonempty set $X \subset V$ with $X \cap S = \emptyset$,
    \item $S$ is \emph{minimal} in the sense that for each $T \subsetneq S$, 
    $T$ is not a $(W, X)$-separator for any $X$ as above.
\end{itemize}
We will say that $G$ is \emph{fully reduced} if there does not exist a reduction set for $G$.
\\[1em]
\noindent Given any set $S$ (not necessarily a reduction set), we construct the \emph{reduced graph} $R({G},S)$ by deleting from $G$ 
all vertices in $V\setminus S$ separated from $W$ by $S$, 
and adding an edge between each pair of vertices of $S$ (so none if $|S| = 1$). 
Note that the set $W$ is by definition always included in the set of vertices of $R({G},S)$, 
which in turn is a strict subset of~$V$. This allows us to keep the same distinguished set $W$ 
of vertices when applying consecutive reductions to the original graph.

\subsection{Spin model reduction. }
The goal of this section is to explain how certain sets $S$ allow for the spin model on $G$ to be transformed into a new spin model on $R(G,S)$ whilst preserving the boundary correlation functions. There are two types of moves we will consider, both of which transform $G \to R(G,S)$ but require different restrictions on the set $S$:
\begin{itemize}
	\item (Move 1) $S$ is a reduction set which is contained in $\Vis$ (Lemma \ref{lemma:firstmove}).
\end{itemize}
The second move is only relevant in the situation where $W \not\subset \Vis$ (which as we saw in Remark~\ref{remark:expexamp} can happen). It weakens the requirement on $S$ being a subset of Ising-type vertices, but requires that any vertex in $S \setminus \Vis$ must in the reduced graph be adjacent only to other vertices of $S$. In practice, we will only apply this move when $S$ is a subset of $\Vis \cup W$. 
\begin{itemize}
	\item (Move 2)   $S \subset V$, $|S| \leq 3$ and any vertex $v \in S \setminus \Vis$ is only adjacent to vertices separated from $W$ by $S$ (Lemma \ref{lemma:secondmove}).
\end{itemize}
Technically, the first move falls under the umbrella of the second move but we will treat them separately since the first is slightly easier and contains most of the main ideas.
\\[1em]
In this section, we assume without loss of generality (by rescaling the coupling constants) that for each $v \in \Vis$ the measure $\l_v$ is uniform on the set $\left\{ \pm 1 \right\}$ and also that $\beta = 1$. 
\\[1em]
\noindent Our main tool to transform the spin model is the following lemma. It asserts that the correlations between a fixed subset $H \subset V$ of spins in $\left< \cdot \right>_{G}$ may be constructed as the appropriately rescaled correlations of an Ising model on $|H|$ vertices, {so long as $|H| \leq 3$}. Its proof is postponed to the end of this section. Note that no such result can be true if $|H| \geq 4$ as demonstrated by the Lebowitz inequality \eqref{eq:lebow} which is not true for every potential \cite{simon1973varphi4}.  

\begin{lemma}\label{lemma:isingreplacement}
	Let $H \subset V$ with $|H| \leq 3$. Then, there exists $ K_{uu'} \geq 0$ for every $u,u' \in H$, $r_{u} > 0$ for each $u \in H$ and a constant $Z > 0$ such that
	\begin{equation}\label{eq:goalcorr3}
		\left< \vp(A) \right>_{G} = \Big( \prod_{u \in A} r_u  \Big) \frac{1}{Z}\sum_{ \sigma \in \left\{ \pm 1 \right\}^H} \sigma(A) \exp\Big( \sum_{u,u' \in H} K_{uu'} \sigma_u \sigma_{u'} \Big) \quad \text{ for any $A \subset H$.}
	\end{equation} 
	Moreover, we may choose $r_u = 1$ for any $u \in H \cap \Vis$.
\end{lemma}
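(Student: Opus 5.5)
The plan is to match moments directly. Because the field is zero and every $\l_v$ is symmetric, all odd moments vanish both in $\left<\cdot\right>_G$ and in any Ising model on $H$. So \eqref{eq:goalcorr3} is trivial for $|A|$ odd and for $A=\emptyset$ (take $Z$ to be the Ising partition function). It only has to be checked for the pairs $A=\{u,u'\}\subset H$, which gives at most three equations. Write $c_{uu'}=\left<\vp_u\vp_{u'}\right>_G\ge 0$ (nonnegative by the first Griffiths inequality, which holds for symmetric single-site measures) and $m_{uu'}=c_{uu'}/(r_ur_{u'})$. The problem then becomes: choose $r_u>0$, with $r_u=1$ on $H\cap\Vis$, so that $(m_{uu'})$ lies in the set of pair correlations of a ferromagnetic Ising model on $|H|$ vertices. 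For $v\in\Vis$ the constraint $r_v=1$ is natural, since $\vp_v^2=1$.

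\emph{Case $|H|=2$.} Here $\langle\sigma_u\sigma_{u'}\rangle=\tanh K_{uu'}$, so I need $m_{uu'}\in[0,1)$ and then set $K_{uu'}=\operatorname{artanh} m_{uu'}$. If some vertex of $H$ is not Ising, I take its $r$ large. If both are Ising, $m_{uu'}=c_{uu'}<1$, because the law of $(\vp_u,\vp_{u'})$ has positive density with respect to the product reference measure, so $\vp_u\vp_{u'}$ is not a.s.\ equal to $1$.

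\emph{Case $|H|=3$, $H=\{1,2,3\}$.} Put $\tau_{ij}=\tanh K_{ij}\in[0,1)$. The high-temperature expansion gives
\[
\langle\sigma_1\sigma_2\rangle=\frac{\tau_{12}+\tau_{13}\tau_{23}}{1+\tau_{12}\tau_{13}\tau_{23}},
\]
together with its cyclic versions. First I identify the image of $[0,1)^3$ under this map. I expect it to contain (and essentially be) the region
\[
\mathcal R=\{m\in[0,1)^3:\ m_{ij}\ge m_{ik}m_{jk}\ \text{for all } \{i,j,k\}=\{1,2,3\}\}.
\]
The boundary $\tau_{12}=0$ gives $m_{12}=m_{13}m_{23}$. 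To prove that the map is onto $\mathcal R$, I would invert it explicitly: $m_{ij}$ determines $\tanh$ of the couplings through the standard formula $e^{4K_{12}}=\frac{(1+m_{12}+m_{13}+m_{23})(1+m_{12}-m_{13}-m_{23})}{(1-m_{12}+m_{13}-m_{23})(1-m_{12}-m_{13}+m_{23})}$. One then checks that each factor is positive when $m\in\mathcal R$ (this uses $m<1$), and that the coupling is nonnegative exactly when $m_{12}\ge m_{13}m_{23}$, which is equivalent to $(1+m_{12})^2-(m_{13}+m_{23})^2\ge(1-m_{12})^2-(m_{13}-m_{23})^2$. I expect this inversion and the sign check to be the main computational obstacle.

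It remains to choose $r$ so that $m\in\mathcal R$, and I split by how many vertices of $H$ are Ising.
\begin{itemize}
\item When $k\in H\cap\Vis$ (so $r_k=1$), the constraint $m_{ij}\ge m_{ik}m_{jk}$ reads $c_{ij}\ge c_{ik}c_{jk}$. This follows from Ginibre's inequality (the second Griffiths inequality for general symmetric single-site measures) applied to $\vp_i\sigma_k$ and $\sigma_k\vp_j$, since $\sigma_k^2=1$.
\item When $k\notin\Vis$, the constraint becomes $c_{ij}r_k^2\ge c_{ik}c_{jk}$, which holds once $r_k$ is large. Enlarging $r_k$ only decreases $m_{ik}$ and $m_{jk}$, so it never breaks the other constraints $m_{ik}\ge m_{ij}m_{jk}$ and $m_{jk}\ge m_{ij}m_{ik}$, and it also keeps $m<1$.
\end{itemize}
Hence I first fix $r=1$ on Ising vertices and then take $r$ large on the non-Ising ones. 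The condition $m_{ij}<1$ for a pair of Ising vertices is again strict positivity of the density, as in the case $|H|=2$. Degenerate zero correlations cause no trouble: if some $c_{ij}=0$, the inequalities still hold and $\mathcal R$ is closed under such boundary points.
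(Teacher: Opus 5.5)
Your proposal is correct and follows essentially the same route as the paper. Both reduce the lemma to the three pair correlations, characterise realisable ferromagnetic Ising pair correlations on three vertices by $m_{ij}\ge m_{ik}m_{jk}$ and $m_{ij}<1$, and then take $r_k=1$ on Ising vertices (via the Griffiths/Ginibre inequality and $\vp_k^2=1$) and $r_k$ large elsewhere. You additionally verify that characterisation through the explicit inversion formula, which the paper only calls easily checked. The one point that both you and the paper pass over is the degenerate case $c_{ij}=0$ with $k\notin\Vis$, where you need $c_{ik}c_{jk}=0$. This holds because expanding the Gibbs weight shows $\left<\vp_i\vp_j\right>_G>0$ whenever $i$ and $j$ lie in the same connected component of $G$.
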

The importance of choosing $r_u = 1$ for any $u \in H \cap \Vis$ is that it ensures that the distribution of $\vp_{H \cap \Vis} \in \left\{ \pm 1 \right\}^{H \cap \Vis}$ under $\left< \cdot \right>_{G}$ matches the distribution of $\sigma \in \left\{ \pm 1 \right\}^{H \cap \Vis}$ under the Ising measure in \eqref{eq:goalcorr3}.
We start with the transformation for the first type of move.  
\begin{lemma} \label{lemma:firstmove}
    Let $S$ be a reduction set for $G$ such that $S \subset \Vis$. 
    Then, there exists a classical spin model $ \widetilde{\vp}$ on the reduced graph $ \widetilde{G} = ( \widetilde{V}, \widetilde{E}) := R(G, S)$ such that 
    \begin{enumerate}[(i)]
	    \item the reference measure of all spins in $ \widetilde{V}$ is the same
        \item the boundary correlation functions are unchanged:
		\begin{equation}\label{eq:goalcorr}
		    \langle \vp({A}) \rangle = \langle \widetilde{\vp}(A) \rangle_{ \widetilde{G}}
            \end{equation}
            for all $A \subset W$. 
    \end{enumerate}
\end{lemma}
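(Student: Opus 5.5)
The plan is to decompose the Gibbs measure along the separator $S$ and replace the "interior" part by an effective Ising interaction on $S$. Let $X$ be the set of vertices of $V\setminus S$ separated from $W$ by $S$, and let $Y = V \setminus (X\cup S)$, so that $W \subset Y \cup S$. By the separation property there is no edge between $X$ and $Y$. Hence the Gibbs weight factorises as
\[
\exp\big((\vp,J\vp)_{Y\cup S}\big)\,\exp\big((\vp,J\vp)_{X\cup S} - (\vp,J\vp)_S\big),
\]
where the edges inside $S$ are kept in the first factor. We integrate out $\vp_X$ with $\vp_S = \sigma \in \{\pm1\}^S$ fixed, which is possible since $S \subset \Vis$ and we normalised $\l_v$ to be uniform on $\{\pm1\}$. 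This gives a positive function
\[
f(\sigma) = \int \exp\big((\vp,J\vp)_{X\cup S}-(\vp,J\vp)_S\big)\prod_{x\in X}\d\l_x(\vp_x).
\]
The marginal law of $\vp_{Y\cup S}$ is then the model on $Y\cup S$ reweighted by $f(\vp_S)$. Since $f$ is invariant under the global flip $\sigma\to-\sigma$ (by the symmetry of the $\l_x$) and $|S|\le 3$, any even positive function on $\{\pm1\}^S$ can be written as $c\exp(\sum_{u,u'\in S}K_{uu'}\sigma_u\sigma_{u'})$. This holds because the even functions on $\{\pm 1\}^S$ with $|S|\le 3$ are spanned by the constant and the pairwise products $\sigma_u\sigma_{u'}$ (for $|S|=3$ there are exactly four even characters). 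For $|S|=1$ the function $f$ is constant and no edge is needed, matching the definition of $R(G,S)$.

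The key remaining point is that $K_{uu'}\ge 0$, so that the new model on $R(G,S)$, with couplings $J_{uu'}+K_{uu'}$ on the pairs of $S$ and all other data unchanged, is a genuine ferromagnetic classical spin model. Item (i) is then immediate, since no reference measure is altered, and item (ii) follows because the marginal of $\vp_{Y\cup S}$, and hence every $\langle\vp(A)\rangle$ for $A\subset W$, is preserved.

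For the sign of $K$, I would invoke Lemma~\ref{lemma:isingreplacement}, or argue directly. Consider the auxiliary model $\langle\cdot\rangle_{X\cup S}$ on $X\cup S$ with the edges inside $S$ removed and the spins on $S$ free $\pm1$. Then $f(\sigma)/\sum_{\sigma'} f(\sigma')$ is exactly the marginal law of $\vp_S$ in this model. The coefficients $K$ are determined by the correlations $\langle\sigma_u\sigma_{u'}\rangle$ and $\langle\sigma_S\rangle$ of this marginal. Explicitly, for $|S|=3$ one has
\[
4K_{12} = \log\frac{f(+++)\,f(++-)}{f(+-+)\,f(-++)}
\]
and similarly for the other pairs. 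Nonnegativity amounts to inequalities such as $\P(\sigma_1=\sigma_2)$-type comparisons, which I would derive from Griffiths' second inequality, or more cleanly from the FKG/positive-association property of the auxiliary ferromagnet in the variables $\sigma$ after conditioning. Alternatively, Lemma~\ref{lemma:isingreplacement} with $H=S\subset\Vis$ already provides $K\ge0$ and $r_u=1$, so the law of $\vp_S$ in the auxiliary model matches an Ising model on $S$ exactly. Therefore $f \propto \exp(\sum K_{uu'}\sigma_u\sigma_{u'})$, and I would use this route.

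The main obstacle is precisely this positivity of the effective couplings, together with the exact identification of $f$ rather than just matching correlations. This is why the restriction $|S|\le 3$ and the choice $r_u=1$ on $\Vis$ matter: they make the even law on $\{\pm1\}^S$ determined by its pair correlations. Minimality of the reduction set is not needed for this lemma beyond guaranteeing $X\neq\emptyset$, so that the reduced graph is a strict subgraph.
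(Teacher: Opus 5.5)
Your proposal is correct and follows the paper's proof. You integrate out the spins on $X$ with $\vp_S=\sigma\in\{\pm1\}^S$ fixed, apply Lemma~\ref{lemma:isingreplacement} with $H=S$ and $r_u=1$ (allowed since $S\subset\Vis$) to identify the resulting weight as $C\exp(\sum K_{uu'}\sigma_u\sigma_{u'})$ with $K\ge 0$, and place these couplings on the pairs of $S$ in $R(G,S)$. The only cosmetic difference is that you keep the original $E(S)$ edges outside $f$ and use couplings $J_{uu'}+K_{uu'}$, whereas the paper includes them in $F$ and replaces $J$ on $S$ by $K$ outright.
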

\begin{figure}[H]
	\centering
	\includegraphics[scale=0.45]{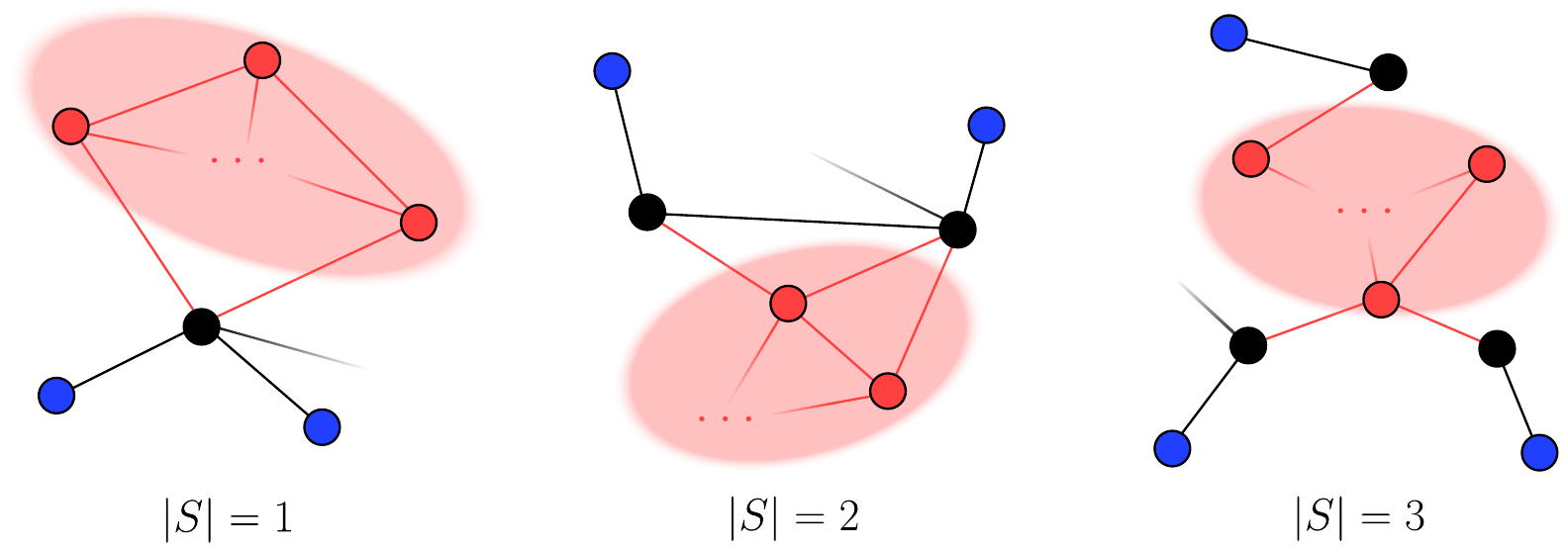}
	\caption{The three cases which can occur in Lemma \ref{lemma:firstmove}. Vertices of the sets $W$, $S$ and $X$ are shown in the colours blue, black and red respectively. In general the sets $W$ and $S$ may overlap even if it is not shown here. In the proof all the spins and interactions within $X$ are integrated away into new interactions between vertices of $S$.  
}
	\label{fig:reduction}
\end{figure}
\begin{proof}
	We write $X \subset V$ for the set of vertices which are separated from $W$ by $S$ so that $ \widetilde{V} = V \setminus X$. 
	We define a map $F : \left\{ \pm 1 \right\}^S \to \R$ given by
	\begin{equation}\label{eq:Fmap}
		F(\sigma) = \int_{\R^X} \exp\Big( \sum_{uu' \in E(S)} J_{uu'} \sigma_u \sigma_{u'} + \sum_{\substack{u \in S \\ x \in X}} J_{ux} \sigma_u \vp_x +\sum_{xx' \in E(X)} J_{xx'} \vp_x \vp_{x'}\Big) \, \d\l^X(\vp_X).
	\end{equation}
	Applying Lemma \ref{lemma:isingreplacement} to the spin system on $S \cup X$ with $H = S$, there must exists $ K_{uu'} \geq 0$ for each $u,u' \in S$ and $C > 0$ such that
	\begin{equation}\label{eq:beforelincomb}
		\sum_{\sigma \in \left\{ \pm 1 \right\}^S} \sigma(A) F(\sigma) = C \times \sum_{\sigma \in \left\{ \pm 1 \right\}^S} \sigma(A) \exp\Big(\sum_{u,u' \in S} K_{uu'} \sigma_u \sigma_{u'}\Big)  
	\end{equation} 
	for each $A \subset S$ (including the empty set). 
	Note that we have taken $r_u = 1$ for each $u \in S$ since $S \subset \Vis$. 
	This implies, by taking suitable linear combinations of \eqref{eq:beforelincomb}, that
	\begin{equation}\label{eq:afterlincomb}
		F(\sigma) = C \times \exp\Big(\sum_{u,u' \in S} K_{uu'} \sigma_u \sigma_{u'}\Big)
	\end{equation} 
	for each $\sigma \in \left\{ \pm 1 \right\}^S$.
	\noindent
	We can now construct the spin model on the reduced graph $ \widetilde{G} = ( V \setminus X, \widetilde{E})$. We set $ \widetilde{J_{uu'}} = K_{uu'}$ for each pair $u,u' \in S$ and set $ \widetilde{J}_{e} = J_{e} $ for every other edge in $\widetilde{E}$. Let $\widetilde{\vp}$ be distributed according to the spin model on $ \widetilde{G}$ with coupling constants $ \widetilde{J}$ and potentials $( P_v)_{v \in \widetilde{V}}$. To prove \eqref{eq:goalcorr} we have for all $A \subset W$ (by integrating over the spins $\vp_X$ and using $W \subset V \setminus X$)
	\[
		\left< \vp(A) \right> \propto \int_{\R^{ V \setminus X}} \vp\left( A \right) \exp\Big(\sum_{uv \in E \setminus E(S \cup X)} J_{uv} \vp_u \vp_v\Big) F((\vp_u)_{u \in S})) \d \l^{V \setminus X}(\vp_{V \setminus X})
\]
which by \eqref{eq:afterlincomb} (noting that $\vp_u \in \left\{ \pm 1 \right\}$ for $u \in S$) is proportional to
	\[
		\int_{\R^{V \setminus X}} \vp\left( A \right) \exp\Big(\sum_{uv \in E \setminus E(S \cup X)} J_{uv} \vp_u \vp_v\Big) \exp\Big(  \sum_{u,u' \in S} K_{uu'} \vp_u \vp_{u'}\Big)  \d \l^{V \setminus X}(\vp_{V \setminus X}),
\]
which is exactly (up to a scale factor, which matches by taking $A = \emptyset$) the correlation of $ \widetilde{\vp}(A)$ under the new measure. 
\end{proof}

The argument for moves of the second type is similar but with the additional difficulty that we must not only change the potential on vertices of $S \setminus \Vis$ but we must do so whilst preserving the boundary correlation functions, even if $S \setminus \Vis$ contains vertices of $W$.
\begin{lemma}\label{lemma:secondmove}
	Let $S \subset V$ be such that $|S| \leq 3$. Suppose any vertex $v \in S \setminus \Vis$ is only adjacent to vertices which are separated from $W$ by $S$. 
	Then, there is a classical spin model $ \widetilde{\vp}$ on the reduced graph $ \widetilde{G} = ( \widetilde{V}, \widetilde{E}) := R(G,S)$ and scaling factors $r_u > 0$ for each $u \in S$ such that
	\begin{enumerate}[(i)]
		\item the reference measure on all spins in $S \setminus \Vis$ becomes the Ising measure $\l\propto \delta_{1} + \delta_{-1}$,
			and remains the same on all other spins in $ \widetilde{V}$.
		\item  the boundary correlation functions satisfy
		\begin{equation}\label{eq:goalcorr2}
			\langle \vp({A}) \rangle = \Big( \prod_{u \in A}  r_u \Big) \langle \widetilde{\vp}(A) \rangle_{ \widetilde{G}}
            \end{equation}
            for all $A \subset W$. 
	\end{enumerate}
\end{lemma}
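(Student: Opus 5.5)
The plan is to mirror the proof of Lemma~\ref{lemma:firstmove}, applying Lemma~\ref{lemma:isingreplacement} to a block that now also contains the non-Ising spins of $S$. Let $X$ be the set of vertices separated from $W$ by $S$, and write $S_0 = S\setminus\Vis$ and $S_1 = S\cap\Vis$.

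\textbf{Step 1: isolate the part of the measure involving $S_0$.} The hypothesis says every $v\in S_0$ has all its neighbours in $X$. Hence $\vp_{S_0}$ interacts with nothing in $\widetilde V\setminus S$, and not even with $S_1$ directly. So the full weight factorises as
\[
\exp\Big(\sum_{e\notin E(S\cup X)} J_e\vp_u\vp_v\Big)\, F(\vp_S),
\]
where $F$ is defined as in \eqref{eq:Fmap}, except that we also integrate against $\prod_{u\in S_0}\d\l_u(\vp_u)$ with $\vp_{S_0}$ left free. In other words, $F(\vp_S)$ is the Boltzmann weight of the spin system on $S\cup X$, with the spins $\vp_{S_1}\in\{\pm1\}$ held fixed.

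\textbf{Step 2: apply Lemma~\ref{lemma:isingreplacement} to the block.} Take the spin system on $S\cup X$ with its original reference measures and $H=S$. This gives $K_{uu'}\ge 0$, scalings $r_u>0$ with $r_u=1$ on $S_1$, and a constant $C$ such that for every $A\subset S$,
\[
\int \vp(A)\,\widehat F \;=\; C\Big(\prod_{u\in A} r_u\Big)\sum_{\sigma\in\{\pm1\}^S}\sigma(A)\,e^{\sum K_{uu'}\sigma_u\sigma_{u'}} .
\]
Here $\widehat F$ denotes the full weight on $S\cup X$, and the integral runs over $\vp_{S_0}$, over $\vp_{S_1}\in\{\pm1\}$ and over $\vp_X$. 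Set $\widetilde J_{uu'}=K_{uu'}$ on pairs in $S$, keep every other edge weight, give the vertices of $S_0$ the Ising measure, and set $r_u=1$ for $u\notin S$.

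\textbf{Step 3: verify \eqref{eq:goalcorr2}.} Fix $A\subset W$. The only variables of $\vp(A)$ that meet the block are those in $A\cap S$. For the vertices of $A\cap S_0$ there is no further coupling to the outside, so after integrating out $\vp_{S_0}$ and $\vp_X$ the remaining expression is
\[
\int \vp(A\setminus S)\, e^{\sum_{e\notin E(S\cup X)}J_e\vp_u\vp_v}\, G_{A\cap S_0}(\vp_{S_1}) .
\]
Here $G_B(\vp_{S_1}) := \int \vp(B)\,\widehat F\,\d\vp_{S_0}\d\vp_X$ for $B\subset S_0$, and the factor $\vp(A\cap S_1)$, being part of the outside weight, is included in $\vp(A\setminus S_0)$ and handled with the other outside spins.

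\textbf{Step 4 (main obstacle): turn the moment identity into a pointwise identity.} The key claim is that for each $B\subset S_0$ the function $G_B$ on $\{\pm1\}^{S_1}$ satisfies
\[
G_B(\tau) \;=\; C\Big(\prod_{u\in B} r_u\Big)\sum_{\sigma_{S_0}}\sigma(B)\,e^{\sum K\sigma\sigma}\big|_{\sigma_{S_1}=\tau}.
\]
To prove it, test both sides against the characters $\tau(D)$, $D\subset S_1$. Step 2 with $A=B\cup D$, using $r=1$ on $D$, gives equality of all Fourier coefficients, and Fourier inversion on $\{\pm1\}^{S_1}$ then yields the pointwise identity.

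Substituting this into Step 3 shows that $\langle\vp(A)\rangle$ equals $\prod_{u\in A}r_u$ times the unnormalised $\widetilde\vp$-correlation. Taking $A=\emptyset$ identifies the partition functions and fixes the constants. The delicate points to check are, first, that the hypothesis of only $X$-neighbours is exactly what lets us treat $\vp_{S_0}$ as internal to the block, so that no outside edge ever involves a rescaled spin. Second, one must use that $r_u=1$ on $S_1$, since those spins still couple to the outside and must keep their $\pm1$ values.
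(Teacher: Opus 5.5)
Your proposal is correct and is essentially the paper's proof: the paper also integrates $S\setminus\Sis$ into the block via $X' = X\cup(S\setminus\Sis)$, applies Lemma~\ref{lemma:isingreplacement} to the system on $S\cup X$ with $H=S$ and $r_u=1$ on $\Sis$, and turns the moment identities into pointwise identities in $\sigma\in\{\pm1\}^{\Sis}$ by taking linear combinations, which is exactly your Fourier inversion. Two small corrections. First, the hypothesis allows vertices of $S\setminus\Vis$ to be adjacent to other vertices of $S$; the paper only uses that every edge at $S\setminus\Sis$ lies in $E(S\cup X)$, so your aside that they do not interact with $S_1$ directly is unwarranted, though harmless since your block weight already contains all of $E(S)$. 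Second, the display in Step~3 should carry $\vp(A\setminus S_0)$ rather than $\vp(A\setminus S)$.
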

\begin{proof}	
	The argument is similar to the previous lemma except that we must be careful that changing the potential on $S \setminus \Vis \subset W$ does not affect the boundary correlations. 
	Let $X \subset V$ be the set of vertices in $G \setminus S$ which are separated from $W$ by $S$. We introduce the notation $\Sis = S \cap \Vis$ and set
	\[
	X' = X \cup (S \setminus \Sis).
	\] 
	For each subset $B \subset S \setminus \Sis$, we define a map $F_B : \left\{ \pm 1 \right\}^{\Sis} \to \R$ given by
	\begin{equation}\label{eq:Fmap2}
		F_B(\sigma) = \int_{\R^{X'}} \vp(B) \exp\Big(\sum_{uu' \in E(\Sis)} J_{uu'} \sigma_u \sigma_{u'} + \sum_{\substack{u \in \Sis \\ x \in X'}} J_{ux} \sigma_u \vp_x +\sum_{xx' \in E(X')} J_{xx'} \vp_x \vp_{x'}\Big) \, \d\l^{X'}(\vp_X).
	\end{equation}
	We now apply Lemma \ref{lemma:isingreplacement} to the spin system on $\Sis \cup X' = S \cup X$ with $H = S$, leading to couplings $ K_{uu'} \geq 0$ for each $u,u' \in S$, scale factors $r_u > 0$ for each $u \in S \setminus \Sis$ and $C > 0$ such that
	\begin{equation}\label{eq:beforelincomb2}
		\begin{split}
		&\sum_{\sigma \in \left\{ \pm 1 \right\}^{\Sis}} \sigma(A \cap \Sis) F_{A \setminus \Sis}(\sigma) \\
		&\hspace{1cm}= C  \Big( \prod_{u \in A \setminus \Sis} r_u \Big)  \sum_{\sigma \in \left\{ \pm 1 \right\}^S} \sigma(A) \exp\Big (\sum_{u,u' \in S} K_{uu'} \sigma_u \sigma_{u'}\Big)  \\
		&\hspace{1cm}= C \Big( \prod_{u \in A \setminus \Sis} r_u \Big) \\
		&\hspace{1.75cm}\times\sum_{\sigma \in \left\{ \pm 1 \right\}^{\Sis}} \sigma(A \cap \Sis) \sum_{\widetilde{\sigma} \in \left\{ \pm 1 \right\}^S} \widetilde{\sigma}(A \setminus \Sis) \id(\widetilde{\sigma}\mid_{\Sis} = \sigma) \exp\Big (\sum_{u,u' \in S} K_{uu'} \widetilde{\sigma}_u \widetilde{\sigma}_{u'}\Big)
		\end{split}
	\end{equation} 
	for each $A \subset S$ (including the empty set). Here we can only (and do) take $r_u = 1$ for each $u \in \Sis$. 
	Taking again linear combinations of \eqref{eq:beforelincomb2}, we have that 
	\begin{equation}\label{eq:afterlincomb2}
		\begin{split}
		&\sigma(A \cap \Sis) F_{A \setminus \Sis}(\sigma) \\
		&\hspace{1cm}= C   \Big( \prod_{u \in A \setminus \Sis} r_u \Big) \sum_{ \substack{ \widetilde{\sigma} \in \left\{ \pm 1 \right\}^S }} \i(\widetilde{\sigma}|_{\Sis} = \sigma) \times \widetilde{\sigma}(A)\exp\Big(\sum_{u,u' \in S} K_{uu'} \widetilde{\sigma}_u \widetilde{\sigma}_{u'}\Big)
		\end{split}
	\end{equation} 
	for each $\sigma \in \left\{ \pm 1 \right\}^{\Sis}$ and $A \subset S$.
	It should be clear how to use \eqref{eq:afterlincomb2} to finish the proof by arguing exactly as in Lemma \ref{lemma:firstmove}, the only observation to make is that for each $A \subset W$
	\begin{align*}
		\left< \vp(A) \right>_{G} \propto \Big(& \prod_{u \in A \cap  (S \setminus \Sis)} r_u \Big) \int_{\R^{V \setminus X'}} \vp(A \setminus X') \\&
		\times \exp\Big( \sum_{uv \in E \setminus E(S \cup X)} J_{uv} \vp_u \vp_v\Big ) F_{A \cap X'}((\vp_u)_{u \in \Sis})  \d\l^{V \setminus X'}(\vp_{V \setminus X'}),
	\end{align*}
	since crucially, under the assumption on $S$, any edge adjacent to a vertex of $S \setminus \Sis$ must belong to $E(S \cup X)$. 
\end{proof}

\begin{proof}[Proof of Lemma \ref{lemma:isingreplacement}]
	We demonstrate only the case $|H| = 3$ so let $H = \left\{ u_1,u_2,u_3 \right\}$. Let 
	\[
	x_A = \frac{1}{Z}\sum_{ \sigma \in \left\{ \pm 1 \right\}^H} \sigma(A) \exp\Big( \sum_{u,u' \in H} K_{uu'} \sigma_u \sigma_{u'} \Big) \quad \text{ for any $A \subset H$}
	\]
	with $Z$ chosen so that $x_{\emptyset} = 1$. Then, it may be easily checked that the only conditions for a triple $(x_{u_1u_2},x_{u_2u_3},x_{u_3u_1}) \in \left[ 0,\infty \right)^3$ to arise in this manner are the inequalities 
	\begin{equation}\label{eq:isinggriffeq}
		x_{u_1u_3} x_{u_2u_3}\leq x_{u_1 u_2} \quad \text{ and } \quad x_{u_1 u_2} \leq 1
	\end{equation} 
	and their symmetrisations. 
	That is, we must first show (for any ordering of $H$)
	\begin{equation}\label{eq:griffineq}
		\left( \frac{1}{r_{u_3}} \right)^2 \left< \vp_{u_1}\vp_{u_3} \right>_{G} \left< \vp_{u_2} \vp_{u_3} \right>_{G} \leq \left< \vp_{u_1} \vp_{u_2} \right>_{G}.
	\end{equation} 
	Ignoring the easy situation when $\left< \vp_{u_1} \vp_{u_2} \right>_{G}$ vanishes, we see that we may always choose $r_{u_3}$ sufficiently large so that \eqref{eq:griffineq} holds. If it so happens that $u_3 \in \Vis$, then
	\[
		\left< \vp_{u_1} \vp_{u_2} \right>_{G} = \left< \vp_{u_1} \vp_{u_2} \vp_{u_3}^2 \right>_{G} \geq \left< \vp_{u_1}\vp_{u_3} \right>_{G} \left< \vp_{u_2} \vp_{u_3} \right>_{G},
	\] 
	where we have used $\vp_{u_3}^2 = 1$ and the Griffiths inequality for $\left< \cdot \right>_{G}$, which is exactly \eqref{eq:griffineq} with $r_{u_3} = 1$.
	Finally, the second requirement in \eqref{eq:isinggriffeq} certainly holds so long as $r_{u_i} \geq ( \left< \vp_{u_i}^2 \right>_{G})^{1 / 2}$, which again permits $r_{u_i} = 1$ if $u_i \in \Vis$.
\end{proof}

\section{Proof of Theorem \ref{thm:pfaffian}}\label{sec:proof1}

\subsection{Graph-theoretic preliminaries. }We say that a finite graph $G = (V, E)$ with a distinguished set of vertices $W \subset V$ can be 
\emph{drawn in the disk}
if there exists an embedding of $G$ into $\mathbb{D} = \{x: |x| \leq 1\} \subset \RR^2$ with $W$ drawn on $\partial \mathbb{D}$, 
such that there are no crossings of the edges. 
In particular, if $G$ can be drawn in the disk, then it is planar and all vertices of $W$ lie on a single face. 

\begin{proposition} \label{prop:main}
    Let $G = (V, E)$ be a finite graph with a distinguished tuple $W$ of vertices. 
    If $G$ satisfies the $W$-crossing property and is fully reduced, then $G$ can be drawn in the disk. 
\end{proposition}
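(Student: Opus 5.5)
The plan is to reduce the statement to the classical characterisation of graphs with no ``cross'' on a cyclically ordered terminal set, due to Robertson--Seymour (and Jung/Seymour/Thomassen in the two-linkage setting). The standard trick is to add an auxiliary vertex $z$ adjacent to every vertex of $W$, together with the cycle $C_W = v_1 v_2 \cdots v_n v_1$ through $W$, forming a graph $G^\ast$. Then $G$ can be drawn in the disk with $W$ on $\partial\mathbb D$ in the given cyclic order if and only if $G \cup C_W$ is planar with $C_W$ bounding a face. The $W$-crossing property says exactly that there are no vertex-disjoint paths $v_i \to v_k$ and $v_j \to v_l$ for interleaved indices $i<j<k<l$. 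The argument therefore splits into a connectivity part, which uses full reducedness, and a topological part.

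\textbf{Step 1 (connectivity from full reducedness).} Every component of $G$ must meet $W$; otherwise the empty set, or any single vertex of $W$, separates it, contradicting reducedness. More precisely, full reducedness gives that for every nonempty $X \subset V\setminus W$ there is no separator of size at most $3$ between $X$ and $W$. By Menger, every vertex $x \notin W$ therefore admits four paths to $W$ that are disjoint except at $x$ (fans), provided $|W|\ge 4$. The cases $|W|\le 3$ are handled directly: then the crossing property is vacuous, and reducedness forces $V=W$. I will also check that $G\cup C_W$ is ``internally 3-connected relative to $W$'', meaning every cut of size at most $3$ separates only vertices of $W$ from one another. This is exactly the hypothesis in the Robertson--Seymour / Seymour ``disjoint paths'' theorem (Seymour 1980, Thomassen 1980). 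That theorem states that if $G$ has no cross on $W$ and no $(\le3)$-separation cutting off a part disjoint from $W$, then $G$ embeds in the disk with $W$ on the boundary in cyclic order.

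\textbf{Step 2 (topological part).} To make the argument self-contained rather than cite it, I would argue by induction on $|V|$. First, if $V=W$, the crossing property says no two chords $v_iv_k$ and $v_jv_l$ interleave, which is precisely outerplanarity with the given cyclic order. The edges $v_iv_{i+1}$ may be drawn along the circle. Otherwise, take $x\in V\setminus W$ with its four-path fan. The no-cross condition forces the fan endpoints of all interior vertices to be ``non-interleaving''. I would then show that $G^\ast=G\cup C_W\cup\{z\}$ is 3-connected and contains no Kuratowski subdivision. A $K_5$ or $K_{3,3}$ subdivision in $G^\ast$ can be rerouted, using $z$ and the cycle $C_W$, into two disjoint interleaving $W$-paths in $G$, which contradicts the $W$-crossing property. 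By Whitney's theorem, the 3-connected planar $G^\ast$ has a unique embedding, and deleting $z$ leaves $W$ on a common face in cyclic order.

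\textbf{Main obstacle.} I expect the rerouting step to be hardest: showing that any Kuratowski subdivision in $G^\ast$ yields a cross in $G$, and that $G^\ast$ is genuinely 3-connected, since small separations involving $W$-vertices must be treated by hand. My first attempt would be the cleaner route of citing Seymour/Thomassen's two-paths theorem directly, verifying only that full reducedness supplies its 3-connectivity hypothesis. The elementary induction would go in Appendix~\ref{sec:appendix}.
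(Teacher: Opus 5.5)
Your main route is valid. You translate full reducedness into the statement that $G$ has no separation $(A,B)$ of order at most $3$ with $W\subseteq A$ and $B\setminus A\neq\emptyset$. You then quote the Robertson--Seymour theorem (Graph Minors IX): such a graph with no cross on the cyclically ordered set $W$ can be drawn in a disc with $W$ on the boundary. The paper itself credits that reference as the original source. The translation is routine: a set $X$ cut off from $W$ is automatically disjoint from $W$, and a separator of minimum size satisfies the minimality clause in the definition of a reduction set.

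Make sure you quote the $n$-terminal cyclic version. The four-terminal two-paths theorems of Seymour and Thomassen assume no small separation relative to the four chosen terminals. Full reducedness relative to all of $W$ does not provide this once $|W|>4$ (e.g.\ a vertex of $W$ pendant at another vertex of $W$). So applying them quadruple by quadruple does not directly give one disc drawing.

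The paper instead proves the statement from scratch by induction on $|V|$ via edge contraction. Take $x\notin W$ and $e=xy$. Either $G/e$ has a reduction set $S$, which must contain $v_e$ and have size $3$. Then $(S\setminus\{v_e\})\cup\{x,y\}$ is a $4$-separator along which $G$ splits into two smaller graphs, each drawn by induction and glued along a face. Or $G/e$ is fully reduced, hence drawn by induction, and the $4$-fans plus the crossing property show that $N_x$ lies on one face of $G-x$. Citing buys brevity; the paper's argument is self-contained and works directly with the reduction sets used in the algorithm.

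Your self-contained Step 2, however, has a genuine gap. It is announced as an induction on $|V|$ but never uses an inductive hypothesis. Its core claim, that every $K_5$ or $K_{3,3}$ subdivision in $G^\ast$ can be rerouted into a cross in $G$, is the whole content of the theorem, asserted rather than proved.

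That claim also cannot follow from the $3$-connectivity of $G^\ast$, which is all your sketch establishes. Take a graph already drawn in the disc with a triangular inner face $abc$, and add two new adjacent vertices inside it, each joined to $a,b,c$, so that $K_5$ appears. At most one path of a would-be cross can pass through the new vertices, and it can be shortcut along an edge of $abc$. So there is still no cross, yet $G^\ast$ can stay $3$-connected while containing $K_5$. Hence the rerouting must use the absence of $3$-separations (your $4$-fans) in an essential way. The remark that fan endpoints are ``non-interleaving'' does not say how.

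The surrounding topology is fine. $G^\ast$ is $3$-connected under full reducedness. If $G^\ast$ is planar, no part of $G$ can sit inside a triangle $zv_iv_{i+1}$, since $\{v_i,v_{i+1}\}$ would cut it off from $W$. But to make Step 2 a proof you need an actual mechanism for planarity, such as the contraction induction above.
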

We will use this only once at the end of the proof to deduce that our constructed graph is indeed planar. A proof of Proposition \ref{prop:main} is included in Appendix \ref{sec:appendix}. 
It was realised by the authors after writing that a different proof already appeared in the work of 
Robertson and Seymour~\cite{RobSey},
as part of their proof of Wagner's well-quasi-ordering conjecture. 

The other result we will use is a straightforward application of Menger's theorem \cite{Menger} (see Lemma \ref{lemma:Mengerstheorem} below). 
We say that two paths $\tau_1$ and $\tau_2$ in $G$ are $U$\emph{-disjoint} if 
\[
\tau_1 \cap \tau_2 \cap U = \emptyset.
\] 
\begin{lemma}\label{lemma:newmenger}
	Let $A,B \subset V$ be disjoint and also let $U \subset V$ be disjoint from $A$. Let $k$ be the minimum size of a set $K \subset U$ separating $A$ from $B$.
	Then, there exists a collection of pairwise $U$-disjoint paths $\path_1,\ldots,\path_k$ from $A$ to $B$. 	
	If there is no such set, then there is a path $\tau$ from $A$ to $B$ avoiding $U$. 
\end{lemma}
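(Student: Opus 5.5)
The plan is to reduce the statement to the classical vertex version of Menger's theorem \cite{Menger} on an auxiliary graph in which vertices outside $U$ are made "too expensive" to lie in a minimum separator.

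The second claim needs no construction. If no set $K \subset U$ separates $A$ from $B$, then in particular $U$ itself is not an $(A,B)$-separator. So there is a path $\tau$ from $A$ to $B$ that contains no vertex of $U$, which is what is claimed.

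Now assume a separator $K\subset U$ exists and $k$ is its minimum size, so $k \le |U| \le |V|$. Fix $N = |V|+1$ and build an auxiliary graph $G^\ast$ as follows:
\begin{itemize}
\item each $u\in U$ is kept as a single vertex;
\item each $v \in V\setminus U$ is replaced by $N$ copies $v^{(1)},\dots,v^{(N)}$;
\item two vertices of $G^\ast$ are adjacent whenever their originals are adjacent in $G$.
\end{itemize}
Copies of the same vertex are not adjacent to each other; this is harmless, since paths of the paper's definition may repeat a vertex and we never need to step between copies. Let $A^\ast$ be the set of all copies of vertices of $A$; recall that $A\cap U=\emptyset$, so every vertex of $A$ is cloned. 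Let $B^\ast$ be the set of all vertices of $G^\ast$ lying over $B$. Write $\pi$ for the projection $G^\ast\to G$ sending each vertex to its original. It maps paths to paths, and every path in $G$ lifts to a path in $G^\ast$ through any prescribed choice of copies.

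The key step is to show that the minimum size $k^\ast$ of an $(A^\ast,B^\ast)$-separator in $G^\ast$ equals $k$. I will prove the two inequalities separately.
\begin{itemize}
\item \emph{Upper bound, $k^\ast \le k$.} A separator $K\subset U$ in $G$ is also a separator in $G^\ast$: any $A^\ast$–$B^\ast$ path in $G^\ast$ projects under $\pi$ to an $A$–$B$ path in $G$, which meets $K$, and the vertices of $K$ are not cloned.
\item \emph{Lower bound, $k^\ast \ge k$.} Let $K^\ast$ be a separator in $G^\ast$ of size $k^\ast \le k < N$. Then for every $v\notin U$ some copy of $v$ avoids $K^\ast$. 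Suppose an $A$–$B$ path in $G$ avoided $K^\ast\cap U$. We could lift it to $G^\ast$ using only these uncovered copies, giving an $A^\ast$–$B^\ast$ path avoiding $K^\ast$, a contradiction. Hence $K^\ast\cap U$ is a separator contained in $U$, so $k\le |K^\ast\cap U|\le k^\ast$.
\end{itemize}

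Menger's theorem applied in $G^\ast$ now gives $k$ pairwise vertex-disjoint paths $\tau_1^\ast,\dots,\tau_k^\ast$ from $A^\ast$ to $B^\ast$. Their projections $\tau_i=\pi(\tau_i^\ast)$ are paths from $A$ to $B$ in $G$. Since $\pi$ is injective on $U$, vertex-disjointness upstairs gives $\tau_i\cap\tau_j\cap U=\emptyset$ for $i\neq j$, so the $\tau_i$ are pairwise $U$-disjoint. If one insists on simple paths, loops can be erased without enlarging the vertex sets.

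The main point requiring care is the choice $N>k$ in the lower bound, which guarantees that a small separator in $G^\ast$ cannot exhaust all copies of a non-$U$ vertex. Together with the hypothesis $A\cap U=\emptyset$, this ensures the endpoints in $A$ are cloned and never force a separator to use vertices outside $U$.
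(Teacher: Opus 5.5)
Your proof is correct, but it takes a different route from the paper. Both arguments reduce to the classical Menger theorem. The paper does so by \emph{collapsing} the part of $G$ outside $U$, whereas you \emph{clone} it.

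Concretely, the paper passes to the path-induced graph $G'$ on $U$, in which two vertices of $U$ are adjacent when joined by a path whose interior avoids $U$. It replaces each $v\in A\cup B$ by the set $N_v$ of vertices of $U$ adjacent to the component of $v$ in $G\setminus U$, and applies Menger in $G'$ between $A'=\bigcup_{v\in A}N_v$ and $B'=\bigcup_{v\in B}N_v$. Finally it lifts each edge of $G'$ to a path through a component of $G\setminus U$. Different lifted paths may share non-$U$ vertices, which is harmless for $U$-disjointness.

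That approach works on a smaller graph and reuses the path-induced graph that drives the rest of the paper. However, the written sketch only records that a separator $K\subset U$ in $G$ remains a separator in $G'$. This gives $k'\le k$ for the minimal separator size $k'$ in $G'$, which is the inequality opposite to the one needed to produce $k$ paths. The converse (every $(A',B')$-separator in $G'$ separates $A$ from $B$ in $G$) and the lifting through the endpoint components are left implicit.

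Your blow-up with $N>|V|$ copies is the standard vertex-capacity trick. The auxiliary graph is larger, but both inequalities $k^\ast\le k$ and $k^\ast\ge k$ are verified explicitly, and projection and lifting are immediate.

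One small correction to your closing remark: your argument never uses $A\cap U=\emptyset$. If you take $A^\ast=\pi^{-1}(A)$, vertices of $A\cap U$ are handled exactly as you handle those of $B\cap U$. What prevents a small separator from using vertices outside $U$ is the cloning of every vertex of $V\setminus U$, not that hypothesis.
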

The lemma does not imply that the endpoints in $A$ or $B$ are necessarily distinct. Only vertices in $A \cap U$ or $B \cap U$ necessarily appear in at most one path. 
\begin{remark}\label{remark:endpoint}
	It is easy to see that whenever both $k \geq 2$ and $|B| \geq 2$, we can choose our paths carefully so that so that not every one has the same endpoint in $B$. The same statement holds for $A$ by symmetry.
\end{remark}
\begin{proof}
	We define $N_v$, for any $v \not\in U$, to be the set of vertices in $U$ which are neighbour to a vertex in the connected component of $v$ in $G \setminus U$, and otherwise set $N_v = \left\{ v \right\}$ if $v \in U$.
	We apply Lemma \ref{lemma:Mengerstheorem} to the path-induced graph $G'$ from $G$ by $U$ with the subsets
	\[
		A' = \bigcup_{v \in A} N_v \quad \text{ and }\quad B' = \bigcup_{v \in B} N_v.
	\]
	Note that any separating set $K \subset U$ in $G$ of $A$ and $B$ is also a separating set in $G'$ of $A'$ and $B'$. 
	By lifting back to $G$, we thus obtain a family of $U$-disjoint paths $\tau_1,\ldots,\tau_k$ from $A$ to $B$.
	\end{proof}
\subsection{The algorithm. }
We assume that $\left< \cdot \right>_{G}$ is a spin model satisfying Pfaffian relations on $W$. In particular, the graph $G$ satisfies the $(W,\Vis)$-crossing property (Corollary \ref{cor:pfaff}).
\\[1em]
\noindent Our approach is based on an algorithmic reduction of the graph $G$. 
The algorithm is initialised with the pair
\[
	(G^{(0)},\Vis^{(0)}),
\]
where $G^{(0)} = G$ and $\Vis^{(0)} = \Vis$ is the set of Ising-type vertices of the spin model $\left< \cdot \right>_{G}$. 
It will produce a sequence
\[
	(G^{(0)},\Vis^{(0)}),(G^{(1)},\Vis^{(0)}), \ldots, (G^{(T)},\Vis^{(T)}),
\]
with $\k{G} = (\k{V},\k{E})$ a graph and $\Vis^{(k)} \subset \k{V}$ a subset of the vertices, ending in a finite number $T$ of steps. Before describing the algorithm, it will be helpful to state properties that each graph $G^{(k)}$, $k \geq 1$, will have:
\begin{enumerate}[(1)]
	\item The subset $W$ (and its ordering) is always preserved (i.e.~$W \subset V^{(k)}$). 
	\item $\k{G}$ is obtained from the previous graph $G^{(k-1)}$ by applying a reduction move
		\[
			G^{(k-1)} \longrightarrow \k{G} := R(G^{(k-1)},S^{(k-1)})
		\] 
		for a suitable set $S^{(k-1)}$. 
		In particular, $\k{V} \subset V^{(k-1)}$. 
		We also always set 
		\[
			\Vis^{(k-1)} \longrightarrow \Vis^{(k)} := (\Vis^{(k-1)} \cap \k{V}) \cup S^{(k-1)}
		\] 
	\item Each graph $G^{(k)}$ satisfies the $(\k{G},\Vis^{(k)})$-crossing property.
\end{enumerate}
To define the algorithm, our main task is to choose a suitable reduction set $S^{(k)}$ at each step. This choice will be taken according to one of three stages that the algorithm is in:
\begin{itemize}
	\item In the first stage of the algorithm $0 \leq k \leq T_1$, we only apply the reduction moves of the second kind. We use Lemma \ref{lemma:secondmove} to convert vertices of $W \cap \Vis^{(k)}$ into Ising-type vertices. After this stage, the resulting graph $G^{(T_1)}$ will have the property that $W \subset \Vis^{(T_1)}$ which will be of crucial importance later on. 

	\item In the second stage $T_1 \leq k \leq T_2$, we only apply the reduction moves of the first kind. We use Lemma \ref{lemma:firstmove} using reduction sets $S \subset \Vis^{(k)}$ until all remaining non-Ising type vertices have been removed from $V$. This leaves us with the graph $G^{(T_2)}$ which has $\Vis^{(T_2)} = V^{(T_2)}$.

	\item  In the third and final stage $T_2 \leq k \leq T_3$, we apply Lemma \ref{lemma:firstmove} again with any remaining possible reduction sets $S \subset \k{V}$ (without now having to worry about the condition $S \subset \Vis^{(k)}$). We then prove that the graph $G^{(T_3)}$ obtained at the end of all this is \emph{planar}.
\end{itemize}
\begin{figure}\centering
\hspace{-0.3cm}	\includegraphics[scale=0.8]{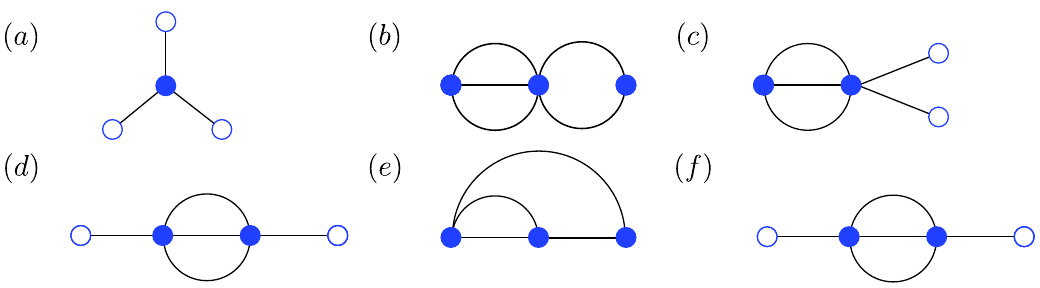}
	\caption{Configurations appearing in the proof of Theorem \ref{thm:pfaffian}. All vertices belong to $W$ and the solid blue vertices belong to $V \setminus \Vis$. Drawn edges correspond to paths between the two endpoints and all the paths in a given diagram are $\Vis$-disjoint. Each diagram with four vertices violates the $(W,\Vis)$-crossing condition for any ordering on $W$. In any diagram with three vertices, there can be no further vertex of $W$ in same connected component without again violating the condition. }
	\label{fig:forbidden}
\end{figure}
\begin{proof}[Proof of Theorem \ref{thm:pfaffian}]
	We will use the algorithm outlined above and along the way prove that it does indeed terminate after a finite number of steps in a planar graph. Suppose that we have reached step $k \geq 0$, the pair $(\k{G},\Vis^{(k)})$ satisfies the properties above and we are in one of the three following stages. 
	\paragraph{Stage I: Removal of vertices in $v \in W \setminus \Vis$.}
	Assuming that there is still a vertex $v \in W \setminus \Vis^{(k)}$, we would like to apply a move of the second type for some set $S \subset V$ which contains $v$. Using Lemma \ref{lemma:secondmove}, this will ensure that $v \in \Vis^{(k+1)}$ and so this process will terminate once there are no such vertices left. 
	It remains to argue that there is such a set $S$ (which we will actually take to be a subset of $\Vis^{(k)} \cup W$) satisfying the desired assumptions. This is the subject of the next claim.
	\\[1em]
	\noindent\textbf{Claim: }Suppose $G$ satisfies the $(W,\Vis)$-crossing property and $w \in W \setminus \Vis$. Then, there exists $S \subset \Vis \cup W$ with $w \in S$ such that (1) $|S| \leq 3$ and (2) any vertex $v$ in $S \setminus \Vis$ is only adjacent to vertices separated from $W$ by $S$.
	\\[1em]
	The argument involves several cases; in each of which we either construct a valid set $S$ 
	(i.e.~satisfying the required conditions) 
	or show that it is inconsistent with the crossing condition (for any ordering of $W$) by reducing to one of the diagrams in Figure \ref{fig:forbidden}.
	\\[1em]
	\noindent To start, we assume that the intersection of $W$ with the connected component of $w$ is of size at least four since otherwise it is automatically a valid set.
	We apply Lemma \ref{lemma:newmenger} in $G$ with $A = \left\{ w \right\}$, $B = W \setminus \left\{ w \right\}$ and $U = \Vis \cup \left( W \setminus \left\{ w \right\}  \right)$. 
	Let $K$ be a minimal separating set which must exist since $B \subset U$.
	\\[1em]
	\noindent\emph{Case 1: $|K| \geq 3$.} We show this cannot happen. Using Lemma~\ref{lemma:newmenger}, we see there are paths $\tau_1,\tau_2$ and $\tau_3$ from $w$ to $W \setminus \left\{ w \right\}$ which are $\Vis \cup (W \setminus \left\{ w \right\})$-disjoint and thus all have distinct endpoints in $W \setminus \left\{ w \right\}$. This is depicted in Figure \ref{fig:forbidden} (a) and clearly violates the crossing condition.
	\\[1em]
	\noindent\emph{Case 2: $K \subset \Vis$ and $|K| \leq 2$.} Then the set $S = \left\{ w \right\} \cup K$ is a valid set by construction.
	\\[1em]
	\noindent\emph{Case 3: $K \setminus \Vis$ is non-empty and $|K| \leq 2$.} Let $X$ be the set of vertices separated from $W \setminus \left\{ w \right\}$ by $K$. We will work with the two subgraphs of $G$ given by $G_1 = X$ and $G_2 = (V \setminus X) \cup K$, both of which include $K$, which we refer to as `inside' and `outside' $K$ respectively. We denote the (at most two) elements of $K$ by $w' \in W \setminus \Vis$ and $v$ (with $v = w'$ if $|K| = 1$).
	\\[1em]
	\noindent We start by working inside $K$. We apply Lemma \ref{lemma:newmenger} in the graph $G_1$ with $A = \left\{ w \right\} $, $B = K$ and $U = \Vis \cap G_1$; let $K_1$ be a minimal separating set.
	\begin{itemize}
		\item If $K_1$ does exist and $|K_1| \leq 2$, then the set $S = \left\{ w \right\} \cup K'$ is a valid set. 
		\item If either $K'$ does not exist or if $|K_1| \geq 3$, then there must be three $\Vis$-disjoint paths $\tau_1,\tau_2,\tau_3$ in $G_1$ with $\tau_1,\tau_2$ both from $w$ to $w' \in K \setminus \Vis \subset W$. We can also ensure the third path ends at $v$, see Remark \ref{remark:endpoint}. We will use this construction later.
	\end{itemize}
	At this point, we have to split our argument in two according to whether $|K \setminus \Vis| = 1$ (which includes $|K| = 1$) or not. 
	\\[1em]
	\noindent\emph{Case 3.1: }Let us first assume the former.
	We must look outside $K$ for a separating set. We apply Lemma \ref{lemma:newmenger} in the graph $G_2$ with $A = K$, $B = W \setminus (K \cup \left\{ w \right\})$ and $U = \Vis$; let $K_2$ be a minimal separating set (which may contain $v$ if $v \in \Vis$).
	\begin{itemize}
		\item Suppose that $K_2$ does exist and $|K_1| \leq 1$, then the set $S = \left\{ w,w' \right\} \cup K_2$ is a valid set. 
		\item We argue that if either $K_2$ does not exist or if $|K_2| \geq 2$ then there is a contradiction. Assuming so, there must be $\Vis$-disjoint paths $\tau_4,\tau_5$ in $G_2$ from $K$ to $W \setminus (K \cup \left\{ w \right\})$ with $\tau_4$ starting at $w'$ and $\tau_5$ starting at $v$ (again see Remark \ref{remark:endpoint}).
			If $|K| = 1$, the two possibilities of the five paths $\tau_1,\ldots,\tau_5$ are shown in Figure \ref{fig:forbidden} (b) and (c).
		If instead $|K| = 2$, then the paths $\tau_1,\tau_2,\tau_4,\tau_3 \circ \tau_5$ must appear as in Figure \ref{fig:forbidden} (d) or (e). All four of these diagrams are incompatible with the crossing condition.
	\end{itemize}
	
	\noindent\emph{Case 3.2: }Finally, we turn to when $K \subset V \setminus \Vis$ and $|K| = 2$, so that now both $w',v \in W \setminus \Vis$. 
	We return to the geometry inside $K$. We again invoke Lemma \ref{lemma:newmenger} in $G_1$ with $A = \left\{ w,w' \right\}$, $B = \left\{ t \right\}$ and $U = \Vis$. Like before, if the minimal separating set $K_3$ is a singleton then the set
	\[
	S = \left\{ w,w' \right\} \cup K_3
	\] 
	will be valid. We now rule out either $K_3$ not existing or $|K_3| \geq 2$. As will now be familiar, there are $\Vis$-disjoint paths $\tau_6$ and $\tau_7$ in $G_1$ both starting at $w$ and ending at $w'$ and $v$ respectively.  	
	If $W \setminus (K \cup \left\{ w \right\})$ is disconnected from $K$ then the set $S = \left\{ w \right\} \cup K$ suffices. Otherwise, there is a path $\tau_8$ in $G_2$ starting from $w'$ (say) to $ W \setminus \left\{ w,w',v \right\}$. The paths $\tau_6,\ldots,\tau_8$ are shown in Figure \ref{fig:forbidden} (f), and again lead to a contradiction.
	\paragraph{Stage II: Removal of remaining vertices in $V \setminus \Vis$. }
	At this point, we may assume that $W \subset \Vis^{(k)}$. Recall that our strategy is to apply Lemma \ref{lemma:firstmove} until all the non-Ising type vertices have been removed. The following claim asserts that unless $V^{(k)}\setminus\Vis^{(k)}$ is empty, in which case this stage of the algorithm is considered completed, there exists a reduction set $S$ which satisfies the key assumption ($S \subset \Vis^{(k)}$) in Lemma \ref{lemma:firstmove} and such that in the reduced graph
	\[
		| V^{(k+1)} \setminus \Vis^{(k+1)} | \leq | V^{(k)} \setminus \Vis^{(k)} | - 1.
	\]
	Thus, after finitely many steps the second stage will have ended.
	\\[1em]
	\noindent\textbf{Claim: }Suppose $G$ satisfies the $(W,\Vis)$-crossing property with $W \subset \Vis$ and let $v \in V \setminus \Vis$. Then, there exists a reduction set $S$ contained in $\Vis$ which separates $\left\{ v \right\}$ from $W$.
	\\[1em]
	\noindent We argue similarly to above. Assume that the claim is false and so the minimal such set $S \subset \Vis$ is of size at least $4$. 
	Using Lemma \ref{lemma:newmenger} with $A = \left\{ v \right\}$ and $B  =\left\{ W  \right\}$, we see that there must exist paths
	\[
		\path_1,\ldots,\path_4,
	\]
	with $\path_i$ connecting $v$ to $w_i$, and such that $\tau_i \cap \tau_j \cap \Vis = \emptyset$ for $1 \leq i < j \leq 4$. Each $w_i$ must, since $W \subset \Vis$, be distinct. This is illustrated in Figure \ref{fig:forbidden} (b), and clearly leads to a contradiction of the $(W,\Vis)$-crossing property, proving the claim.
	\paragraph{Stage III: Reducing to a planar graph.}
	From now on, we simply choose any reduction set $S^{(k)} \subset \Vis^{(k)}$ and apply Lemma \ref{lemma:firstmove} (using $\Vis^{(k)} \subset \k{V}$). This process will terminate after a finite number of steps since in the reduced graph 
	\[
		|V^{(k+1)}| \leq |V^{(k)}| -  1
	\] 
	and there can be no such reduction set if $V^{(k)} = W$ (though it will usually stop before this point). The final graph $G^{(T_3)}$ has the $W$-crossing property and is fully-reduced, and so by Proposition \ref{prop:main} can be drawn in the disk. 
	\\[1em]
	\noindent\emph{Conclusion.} It only remains to clarify why this proves the two statements in Theorem \ref{thm:pfaffian}. Let 
	\[
		U = S^{(0)} \cup S^{(1)} \cup \ldots \cup S^{(T_3 - 1)} \subset \Vis^{(0)} \cup W.
	\] 
	Since we only used reduction moves at each step, it must be that the graph $G^{(T_3)}$ is actually the path-induced graph from $G$ by the subset $U$. Moreover, it is the content of Lemma \ref{lemma:secondmove} and Lemma \ref{lemma:firstmove} that for each $k \geq 1$, there is a spin model $ \widetilde{\vp}^{(k)} \sim \left< \cdot \right>_{\k{G}}$ on $\k{G}$ such that
\begin{itemize}
	\item the set $\Vis^{(k)}$ is exactly the set of Ising-type vertices for $\left< \cdot \right>_{\k{G}}$
	\item the boundary correlation functions of $ \widetilde{\vp}^{(k)}$ coincide (up to a rescaling) with those of $\left< \cdot \right>_{G}$. 
\end{itemize}
	In particular, there exists $r_v > 0$ for each $v \in W$ such that
	\[
		\left< \vp(A) \right>_{G} =\Big ( \prod_{v \in A} r_v\Big) \left< \widetilde{\vp}^{(k)}(A) \right>_{G^{(T_3)}} \quad \text{ for each $A \subset W$}.
	\] 
	By absorbing the scale factors $r_u$ into the Ising-type potential for each $u \in V^{(T_3)}$, we see that we have completed our proof of the theorem.
\end{proof}

\begin{remark}
	The algorithm above involves a frequent number of choices at each step however they turn out to not matter: the final product can be shown to be \emph{unique} up to graph isomorphisms. 
	We phrased the algorithm in this staged manner since otherwise it is possible for it to become stuck: there are no available reduction sets $S \subset \Vis^{(k)}$ to remove a vertex $v \in \k{V} \setminus \Vis^{(k)}$ until first a move of the second type is applied.
\end{remark}
\part{Sharpness}\label{part:2}

\section{Correlation inequalities}\label{sec:corrineq}
We resume the setting of Section \ref{sec:app2}. In particular, we fix $P \in \Ec$ and consider the finite-volume spin models $\left< \cdot \right>_{G_n,\beta,h}$ which (under Assumption \ref{asum:two}) converge as $G_n \to \G$ and then $h \downarrow 0$ to the infinite-volume measure $\left< \cdot \right>_{\beta}^{+}$. 
\\[1em]
\noindent This section concerns the Simon-Lieb inequality and the FKG inequality for $\P^{+ / +}_\beta$, the two essential ingredients in the proof of Theorem \ref{thm:sharpness}. 
We will frequently take for granted various consequence of the Griffiths inequalities \cite{GriffithsCor,Gin}, in particular, that the map 
\[
\beta \mapsto \left< \vp_o \right>_{G,\beta,h}
\]
is non-decreasing in $\beta \geq 0$. 
The interested reader is sent to \cite[Chapter 2]{simon2026phase} for an exhaustive discussion on the variety of correlation inequalities available.
\subsection{Simon-Lieb inequality. }
The version of the inequality that we will use is as follows.
We identify a subset of the vertices $S \subset V$ with the subgraph $(S,E(S))$ of $G$.

\begin{proposition}[] Let $G = (V,E)$ be a finite graph and $S \subset V$. If $P \in \Ec$, then
	\[
		\left< \vp_o \vp_x \right>_{G,\beta,h} \leq \beta \sum_{\substack{u \in S \\ v \in G \setminus S}} J_{uv} \left< \vp_o \vp_u \right>_{S,\beta,0} \left< \vp_v \vp_x \right>_{G,\beta,h} + h \sum_{u \in S} \left< \vp_o \vp_u \right>_{S,\beta,0} \left< \vp_x \right>_{G,\beta,h} 
	\] 
	for any $o \in S$ and $x \in V \setminus S$.
\end{proposition}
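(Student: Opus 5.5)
We will obtain the inequality by interpolation in the couplings that cross $\partial S$, with the ghost vertex $\g$ handling the field so that both terms come out of one computation. Treat $h$ as the coupling of each edge $u\g$, and let $\langle\cdot\rangle_t$, $t\in[0,1]$, be the measure on $G$ in which every coupling $J_{uv}$ with $u\in S$, $v\notin S$, and the field at every $u\in S$, is multiplied by $t$. Everything else is unchanged. At $t=0$ the set $S$ is decoupled from the rest and carries zero field, so symmetry gives $\langle\vp_o\vp_x\rangle_0=\langle\vp_o\rangle_{S,\beta,0}\langle\vp_x\rangle=0$. Hence
\[
\langle\vp_o\vp_x\rangle_{G,\beta,h}=\int_0^1 \Big(\beta\sum_{u\in S,v\notin S}J_{uv}\langle\vp_o\vp_x;\vp_u\vp_v\rangle_t+h\sum_{u\in S}\langle\vp_o\vp_x;\vp_u\rangle_t\Big)\d t .
\]

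The key step is a pointwise bound on the truncated four- and three-point functions:
\[
\langle\vp_o\vp_x;\vp_u\vp_v\rangle_t\le\langle\vp_o\vp_u\rangle_t\langle\vp_v\vp_x\rangle_t,
\qquad
\langle\vp_o\vp_x;\vp_u\rangle_t\le\langle\vp_o\vp_u\rangle_t\langle\vp_x\rangle_t .
\]
The first is the Lebowitz inequality \eqref{eq:lebow} with nonzero field. We will derive it from \eqref{eq:lebowpm} under $\P^{+/+}$: expand $\vp\vp$ products in the Ginibre variables, drop the terms that are odd in $\vp^-$, and use the FKG bound \eqref{eq:lebowpm1} for $P\in\Ec$. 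The three-point version follows the same way from \eqref{eq:corr3}. There we bound $\E[\Lm_o\Lm_u\Lp_x\i(\cdots)]$ by FKG, since $\w^-$ is negatively correlated with $(\Lp,\w^+)$, which gives the GHS-type inequality. Negative sign combinations, namely the $-\langle\vp_o\vp_v\rangle\langle\vp_u\vp_x\rangle$-type terms, are simply discarded because they are nonnegative by Griffiths.

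Next we replace $\langle\cdot\rangle_t$ by the decoupled or full measures, using Griffiths monotonicity in the couplings. We have $\langle\vp_v\vp_x\rangle_t\le\langle\vp_v\vp_x\rangle_{G,\beta,h}$ and $\langle\vp_x\rangle_t\le\langle\vp_x\rangle_{G,\beta,h}$. The factor $\langle\vp_o\vp_u\rangle_t$ is the obstacle: it is increasing in $t$, so monotonicity only bounds it by the full measure and not by $\langle\cdot\rangle_{S,\beta,0}$. So the naive interpolation gives the wrong side.

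To fix this, we will instead use a random-path / Simon–Lieb argument with the double cluster swapping. Write $\langle\vp_o\vp_x\rangle_{G,\beta,h}$ through \eqref{eq:corr1}-type identities for the pair $(\vp,\vp')$ with $\vp'\sim\langle\cdot\rangle_{S,\beta,0}\otimes\langle\cdot\rangle_{G\setminus S}$. The difference between the two measures then corresponds, as in \eqref{eq:corr4} and Remark \ref{remark:extensions}, to the boundary couplings. Concretely, differentiating $\log$ of the joint partition function in these couplings, as in the Brydges–Fröhlich–Spencer derivation, gives an exact expansion. This expansion is $\sum J_{uv}\langle\vp_o\vp_u\rangle_{S,\beta,0}\langle\vp_v\vp_x\rangle$ plus a correction term, and the correction should be nonpositive by the FKG inequality of Proposition \ref{prop:FKG}. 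Establishing the sign of that correction is the main difficulty. If it resists, we fall back on citing the BFS/Brydges random walk representation, which gives this exact inequality for the class $\Ec$ via the Lebowitz inequality proved above.
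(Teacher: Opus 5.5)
\textbf{Verdict: there is a genuine gap.} Nothing in your proposal establishes the inequality. The interpolation route fails at its key step, the double-cluster-swapping route is only a programme, and the fallback citation is justified incorrectly.

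\textbf{The pointwise bounds are false.} The Lebowitz inequality gives $\langle\vp_o\vp_x;\vp_u\vp_v\rangle_t\le\langle\vp_o\vp_u\rangle_t\langle\vp_v\vp_x\rangle_t+\langle\vp_o\vp_v\rangle_t\langle\vp_u\vp_x\rangle_t$. The cross pairing sits on the \emph{upper-bound} side with a plus sign. Its nonnegativity (Griffiths) is therefore a reason you cannot drop it, not a reason you can. Concretely, take Ising spins with $h=0$, $o,u\in S$, $v,x\notin S$, with $J_{ov},J_{ux}$ huge and a single tiny $J_{uv}>0$. At any fixed $t>0$ we have $\sigma_o\approx\sigma_v$ and $\sigma_u\approx\sigma_x$. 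The left side is then $\approx 1-\langle\sigma_o\sigma_u\rangle^2\approx 1$, while your right side is $\approx\langle\sigma_o\sigma_u\rangle^2\approx 0$.

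The three-point bound fails for the same reason. We have $\langle\vp_o\vp_x;\vp_u\rangle=\langle\vp_o;\vp_x;\vp_u\rangle+\langle\vp_o;\vp_u\rangle\langle\vp_x\rangle+\langle\vp_o\rangle\langle\vp_x;\vp_u\rangle$. The last term is not dominated by $\langle\vp_o\rangle\langle\vp_u\rangle\langle\vp_x\rangle$: glue $u$ to $x$, decouple $o$, and take a small field.

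So interpolation, Lebowitz and Griffiths give at best a bound with both pairings and full-measure factors. As you yourself observe, this route never produces the restricted function $\langle\vp_o\vp_u\rangle_{S,\beta,0}$, and that restricted factor is exactly what $\vp_\beta(S)$ requires. This also shows that the Lieb refinement is not a consequence of the Lebowitz inequality in this way.

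\textbf{The double-cluster-swapping replacement does not work as described.} The expansion is never written, and the sign of the ``correction'' is the entire content of the Simon--Lieb inequality. There is also a structural obstruction. If the boundary edges are present for $\vp$ but absent for $\vp'$, the rotated density acquires cross terms $\beta J_{uv}(\vp^+_u\vp^-_v+\vp^-_u\vp^+_v)$. The conditional law of $(\sgn\vp^+,\sgn\vp^-)$ is then no longer a pair of independent ferromagnetic Ising models. Consequently neither Proposition~\ref{prop:signflip} nor Proposition~\ref{prop:FKG} is available for that coupling. The $\P^{+/0}$ extension in the paper works only because the field is linear, so the ghost produces no such cross terms.

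\textbf{The fallback is the paper's route, but its justification is wrong.} The paper gives no proof and simply cites Theorem 6.1 of \cite{BFS}. That theorem is proved via the random-walk representation, under BFS's own hypothesis on the single-site measure. Their argument relies on a Gaussian-domination-type inequality, not on the Lebowitz inequality. The fact that every $P\in\Ec$ satisfies their hypothesis is a separate input, stated in \cite{brydges1983random}, Section 4 (BFS had originally claimed the opposite). The $h$-term is then obtained by adding a ghost vertex $\g\notin S$ coupled to every vertex with strength $h$. This is also why the restricted factor $\langle\vp_o\vp_u\rangle_{S,\beta,0}$ carries no field. With these corrections your fallback coincides with the paper's argument.
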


This result was proved in \cite[Theorem 6.1]{BFS}, under a slightly different assumption on the potential. That this assumption is implied by $P \in \Ec$ is stated in \cite[Section 4]{brydges1983random}{\hypersetup{linkcolor=black}\footnote{Note that this implication is falsely claimed to not hold in \cite{BFS} but this is later corrected in \cite{brydges1983random}}}. 
We note that the argument in \cite{BFS} relied on a Gaussian domination inequality, for which a weaker version had previously been established for the class $\Ec$ \cite{newman,simon2026phase}. The adaptation to a non-negative magnetic field (as stated here) is immediate through the use of a ghost vertex.
\\[1em]
\noindent For any finite subset $S \subset \G$ containing $o \in V$, we define
\[
	\vp_\beta\left( S\right) = \beta \sum_{\substack{u \in S \\ v \not\in S}} J_{uv} \left< \vp_o \vp_u \right>_{S,\beta,0}.
\] 
This definition first appeared for the Ising model in \cite{DCT}.
Note that by our assumption on $J$~\eqref{eq:Jasum}, $\vp_\beta(S) < \infty$ for any finite set $S$.
The following is a classical consequence of the Simon-Lieb inequality, and it may be proved by arguing exactly as in \cite{DCT}.
\begin{lemma}\label{lemma:expdecay}
	Suppose that there exists a finite set $S \subset \G$ such that $\vp_\beta(S) < 1$. Then,
	\[
	\chi(\beta) < \infty
	\] 
	and if $J$ is finite-range, then $\left< \vp_o \vp_x \right>^+_\beta$ decays exponentially fast in $d(0,x)$.
\end{lemma}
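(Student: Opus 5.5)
We follow the argument of \cite{DCT}, using the Simon--Lieb inequality stated just above as the only input. Fix a finite $S \ni o$ with $\vp_\beta(S) < 1$. By translation invariance, for each vertex $z$ we have the translate $S_z$ containing $z$ with $\vp_\beta(S_z)=\vp_\beta(S)$. We work in finite volume $G_n \supset S$ with field $h>0$, and only at the end send $n\to\infty$ and then $h\downarrow 0$. The limits are justified by Assumption \ref{asum:two} and by monotonicity in $h$ (Griffiths). This gives bounds for $\left<\cdot\right>^+_\beta$.

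\emph{Finite susceptibility.} Let $\chi_n(h) = \sum_{x\in G_n}\left<\vp_o\vp_x\right>_{G_n,\beta,h}$. We split the sum into $x\in S$ and $x\notin S$. The terms with $x\in S$ are bounded by $|S|\,\left<\vp_o^2\right>_{G_n,\beta,h}$, which is uniformly bounded. For $x\notin S$ we apply the Simon--Lieb inequality with this $S$, and then sum over $x$. Translation invariance is only approximate in finite volume, so it is cleaner to bound $\sup_{y}\sum_x \left<\vp_y\vp_x\right>_{G_n,\beta,h}$, applying Simon--Lieb around $S_y$. This gives
\[
M_n \le C|S| + \vp_\beta(S) M_n + h|S|\,\sup_u\left<\vp_o\vp_u\right>_{S,\beta,0}\sum_x\left<\vp_x\right>_{G_n,\beta,h},
\]
where $M_n=\sup_y\sum_x\left<\vp_y\vp_x\right>_{G_n,\beta,h}$. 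The field term is the awkward one, since $\sum_x\left<\vp_x\right>$ grows with $|G_n|$.

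To handle the field term, we would first take $h=0$. With zero field the last term vanishes, and we get $M_n \le C|S|/(1-\vp_\beta(S))$ uniformly in $n$. Then $\chi$ for the free state is finite. To pass to $\left<\cdot\right>^+_\beta$, we use the GHS inequality exactly as in the corollary's proof: it gives $\left<\vp_o\right>_{G_n,\beta,h}\le h\chi$, so the magnetisation vanishes and $\left<\cdot\right>^+=\left<\cdot\right>^0$. Hence $\chi(\beta)<\infty$. (Alternatively, one keeps $h>0$ and iterates Simon--Lieb, so the field term appears with the factor $h\sum_x\left<\vp_x\right>\le h|G_n|\,\left<\vp_o\right>$; I expect this bookkeeping to be the main nuisance and would avoid it via the $h=0$ route.)

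\emph{Exponential decay for finite-range $J$.} Let $R$ be such that $S\cup\partial S\subset B_R(o)$, which is possible by finite range. We iterate Simon--Lieb at zero field. Write $f(k)=\sup\{\left<\vp_y\vp_x\right>_{G_n,\beta,0} : d(y,x)\ge k\}$. Applying the inequality around $S_y$, every $v$ in the exterior boundary satisfies $d(v,x)\ge d(y,x)-R$, so $f(k)\le \vp_\beta(S) f(k-R)$. Iterating down to $k\le R$, where $f$ is bounded by $\sup_y\left<\vp_y^2\right>$, gives $f(k)\le C\,\vp_\beta(S)^{\lfloor k/R\rfloor}$ uniformly in $n$. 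Passing to the limit and using $\left<\cdot\right>^+=\left<\cdot\right>^0$ from the first part yields $\left<\vp_o\vp_x\right>^+_\beta\le \exp(-c\,d(o,x))$.
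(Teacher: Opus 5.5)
Your proof is correct and is essentially the argument the paper intends: the paper simply defers to iterating the Simon--Lieb inequality as in \cite{DCT}, which is what you do (absorption for the susceptibility, and the recursion $f(k)\le \vp_\beta(S)\, f(k-R)$ in the finite-range case). The only extra step is that you work at $h=0$ and transfer the bounds to $\left<\cdot\right>^+_\beta$ via GHS and the stochastic domination $\left<\cdot\right>^+_\beta\succeq\left<\cdot\right>^0_\beta$, as in the paper's corollary; this is legitimate for $P\in\Ec$ (alternatively, keeping $h>0$ in infinite volume, the field term contributes only $O(h)$ pointwise and vanishes as $h\downarrow 0$).
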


\subsection{FKG and stochastic domination. }We include a brief review of certain aspects of the theory in the setting that we will need them. Let $T_1,\ldots,T_n$ be totally ordered sets (we will often take either $[0,\infty)$ or $\left\{ 0,1 \right\}$ with the natural order or its reverse) and so that $\Omega = T_1 \times \ldots \times T_n$ is a distributive lattice. A measure $\mu$ on $\Omega$ satisfies the \emph{FKG inequality} if  
\[
	\mu\left[ FG \right] \geq \mu\left[ F \right] \mu\left[ G \right] 
\] 
for any two increasing measurable functions $F,G : \Omega \to \left[ 0,\infty\right)$. If $X \sim \mu$ then we may instead say $X$ satisfies the FKG inequality. If $(X,Y)$ are two coupled random variables on $\Omega$, we say that $(X,Y)$ (resp.~$(X,-Y)$) satisfies the FKG inequality if there joint measure does on $\Omega \times \Omega$ with the product order (resp.~with the product order but reversed on the second component).
An important example is that if $\mu_1,\ldots,\mu_n$ are probability measures on the totally ordered sets $T_1, \ldots, T_n$, then the product measure $\mu = \mu_1 \otimes \ldots \otimes \mu_n$ satisfies the FKG inequality \cite{harris1960lower}. If $\nu$ is another measure on $\Omega$ absolutely continuous with respect to a product measure $\mu$ such that the relative density
	\[
	\H = \frac{\d \nu}{\d \mu}
	\] 
	is strictly positive, then the \emph{FKG-lattice condition} is that
	\[
		\H(x \vee x')  \H(x \wedge x') \geq \H(x)  \H(x') \quad \text{ for any } x,x' \in \Omega.
	\] 
This condition implies that $\nu$ also satisfies the FKG inequality \cite{FKG}{\hypersetup{linkcolor=black}\footnote{The version for continuous spins may be found in e.g.~\cite{GRS,preston1974generalization}}}. We say that one measure $\nu_1$ stochastically dominates another measure $\nu_2$ (denoted $\nu_1 \succeq \nu_2$) if 
\[
\nu_1\left[ F \right] \geq \nu_2\left[ F \right] 
\] 
for each increasing measurable function $F : \Omega \to \left[ 0,\infty \right)$. If $\nu_1$ and $\nu_2$ are two laws of a coupled pair $(X,Y)$ as above, we may write
\[
	\nu_1 \overset{(X,Y)}{\succeq} \nu_2 \quad \text{ or }\quad \nu_1 \overset{(X,-Y)}{\succeq} \nu_2
\] 
to specify the precise order on the product space that we are using.
We will often use the following fact.
\begin{lemma}\label{lemma:stochdom}
	Let $\nu_1$ be absolutely continuous with respect to $\nu_2$. If $\nu_2$ satisfies the FKG inequality and the relative density $\d \nu_1 / \d \nu_2$ is increasing (resp.~decreasing) on $\Omega$, then $\nu_1 \succeq \nu_2$ (resp.~$\nu_1 \preceq \nu_2$)\footnote{Note that this does not imply $\nu_1$ satisfies the FKG inequality.}. 
\end{lemma}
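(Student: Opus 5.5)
The statement is Lemma~\ref{lemma:stochdom}. The proof is a one-line consequence of the FKG inequality for $\nu_2$, applied to the pair consisting of the test function and the Radon--Nikodym derivative. Write
\[
D = \frac{\d\nu_1}{\d\nu_2} \geq 0,
\]
so that $\nu_2[D] = 1$ and, for any increasing measurable $F : \Omega \to [0,\infty)$,
\[
\nu_1[F] = \nu_2[F D].
\]

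\emph{Increasing case.} If $D$ is increasing, then $F$ and $D$ are both increasing and nonnegative, so they are admissible test functions in the definition of the FKG inequality for $\nu_2$. This gives
\[
\nu_2[FD] \geq \nu_2[F]\,\nu_2[D] = \nu_2[F],
\]
that is, $\nu_1[F] \geq \nu_2[F]$. Since $F$ was an arbitrary increasing measurable function, this is exactly $\nu_1 \succeq \nu_2$.

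\emph{Decreasing case.} If $D$ is decreasing, we cannot apply FKG directly to $F$ and $D$, since $D$ is not increasing. I would first reduce to bounded $F$: replace $F$ by $F \wedge M$, which is still increasing and nonnegative, and let $M \uparrow \infty$ at the end by monotone convergence under both measures.

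For bounded $F$ with $0 \leq F \leq M$, the function $M - F$ is nonnegative and increasing, and $D$ is nonnegative but decreasing. To get two increasing nonnegative functions, I would rather reverse the order on $\Omega$. Reversing every coordinate order gives again a product of totally ordered sets. The FKG inequality for $\nu_2$ is preserved under this reversal, because a function is increasing for the reversed order if and only if it is decreasing for the original one. Also, $\nu_2[FG] \geq \nu_2[F]\nu_2[G]$ for two decreasing nonnegative functions follows from the original inequality applied to $M - F$ and $M' - G$, after expanding.

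In the reversed order, $M - F$ and $D$ are both increasing and nonnegative, so FKG gives
\[
\nu_2[(M - F) D] \geq \nu_2[M - F]\,\nu_2[D].
\]
Using $\nu_2[D] = 1$ and $\nu_2[MD] = M$, this rearranges to $\nu_2[FD] \leq \nu_2[F]$, that is, $\nu_1[F] \leq \nu_2[F]$. Letting $M \uparrow \infty$ removes the boundedness assumption, and hence $\nu_1 \preceq \nu_2$.

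\emph{Integrability.} The only point needing care is that all the expectations involved are well defined. Both $F$ and $FD$ are nonnegative, so $\nu_2[F]$ and $\nu_2[FD]$ make sense in $[0,\infty]$. In the increasing case, the FKG inequality with possibly infinite values follows from the bounded case by truncating $F \wedge M$ and $D \wedge M$, both of which remain monotone, and passing to the limit by monotone convergence. There is no genuine obstacle here, and the same truncation applies equally to the mixed orders $(X,Y)$ and $(X,-Y)$ used later, since each is again a product of totally ordered sets.
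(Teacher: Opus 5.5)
Your proof is correct, and it is the standard argument the paper takes for granted (Lemma~\ref{lemma:stochdom} is stated there without proof): $\nu_1[F]=\nu_2[FD]\geq\nu_2[F]\,\nu_2[D]=\nu_2[F]$ by FKG for $\nu_2$, with the order reversed in the decreasing case.

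Two small corrections are needed.
\begin{itemize}
\item $M-F$ is \emph{decreasing}, not increasing, in the original order. That is exactly why it becomes increasing after reversal, which is how you actually use it.
\item Your derivation of reversed-order FKG by expanding $(M-F)(M'-G)$ requires both functions to be bounded. A decreasing density need not be bounded; for example, for the order $(\L^+,-\L^-)$ it may blow up as $\L^-\to\infty$. So you should also replace $D$ by $D\wedge K$, which is still decreasing, and let $K\uparrow\infty$ by monotone convergence.
\end{itemize}
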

\noindent To end this section we include a proof of the FKG inequality for the representation. As part of the proof, we will verify the FKG-lattice condition for the density of $(\L^+,\L^-)$ under $\P^{+ / +}_{\beta}$ so we will first need to explicitly write down said density. To do so, we will need some new notation.
\\[1em]
\noindent Given any $S \subset V$, $\L \in [0,\infty)^S$ and $\w \in \left\{ 0,1 \right\}^{E(S)}$, we let
\begin{equation}\label{eq:qdef}
	q^S_{\L}(\w) = \prod_{uv \in E(S)} (1 - e^{-4J_{uv}\L_u\L_v})^{\w_{uv}} ( e^{-4J_{uv} \L_u \L_v})^{1-\w_{uv}}.
\end{equation}
and 
\begin{equation}\label{eq:qbardef}
	q_\L^{ \overline{S}} = q_\L^{S}(\w) \times \prod_{u \in S} \left( 1 - \exp(-4 h \L_u) \right)^{\w_{ug}} \left( \exp( - 4 h \L_u \right)^{1 - \w_{ug}}
\end{equation}
We will use the notation $q^G_{\L}$ and $q^{ \overline{G}}_{\L}$ for the most common case when $S = V$.
Using this notation, the density of $(\vpp,\vpm,\w^+,\w^-)$ under $\P_{\beta}^{+ / +}$ is proportional to
\begin{equation}\label{eq:rotden25}
	\i(\mathcal{A}) \times q^{ \overline{G}}_{\L^+}(\w^+) q^G_{\L^-}(\w^-) 
	\times \exp\big( 2\beta (\L^+,J\L^+) + 2\beta(\L^-, J\L^-) + 2(h,\L^+)\big) \d\l^V(\vpp + \vpm)\, \d\l^V(\vpp - \vpm),
\end{equation} 
where $\mathcal{A}$ is the event that $\sgn(\vpp)$ and $\sgn(\vpm)$ are constant on the nontrivial clusters of $\w^+$ and $\w^-$ respectively and $\sgn(\vpp) \equiv +1$ on the $\w^+$-cluster of the ghost.  
Let us now define
\begin{equation}\label{eq:Zdef}
	Z^S(\L) = \sum_{\w \subset E(S)} 2^{k(\w)} q^S_{\L}(\w) \quad \text{and }\quad Z^{ \overline{S}}(\L) = \sum_{ \w \subset \overline{E}(S)} 2^{k(\w)} q^{ \overline{S}}_{\L}(\w),
\end{equation} 
where $k(\w)$ is the number of connected components of $\w$.
The density of $(\L^+,\L^-,\w^+,\w^-)$ in $\P_{\beta}^{+ / +}$, obtained from \eqref{eq:rotden25} by summing over $\sgn(\vpp)$ and $\sgn(\vpm)$, is proportional to
\begin{multline}\label{eq:rotden3}
	2^{k(\w^+)} q_{\L^+}^{ \overline{G}}(\w^+) \times 2^{k(\w^-)} q_{\L^-}^G(\w^-) \times \exp(2\beta(\L^+,J\L^+) + 2\beta(\L^-,J\L^-) + 2(h,\L^+)) \\ \times \exp\Big(-\textstyle\sum_{u \in V}\I(\L^+_u,\L^-_u) \Big) \d\l_2^G|_{[0,\infty)}(\L^+) \d\l_2^G|_{[0,\infty)}(\L^-).
\end{multline}
Here, $\l_2^G|_{[0,\infty)}$ is the probability measure on $\left[ 0,\infty \right)^V$ obtained from $\l_2^G$ by projecting $\vp^\pm \to |\vp^\pm|$.
\\[1em]
\noindent We also need the following simple lemma for when $P \in \Ec$ has the form \eqref{eq:Pdef}. The first item in the lemma is the reason why we chose to define $\I$ relative to the measure $\l_2 \otimes \l_2$ rather than $\l \otimes \l$.
We leave the straightforward proof to the reader.
\begin{lemma}\label{lemma:Iprops}
	The function $\I = \I(x,y)$ defined in \eqref{eq:Idef} is such that:
	\begin{enumerate}
		\item $\I$ is non-decreasing in $x$ and $y$ for $x,y \geq 0$.
	\end{enumerate}
	Supposing further that $P$ has full support.  
	\begin{enumerate}
		\setcounter{enumi}{1}
		\item $\d \I / \d x$ is non-decreasing in $y$ and $\d \I / \d y$ is non-decreasing in $x$ for $x,y \in (0,\infty)$.
	\end{enumerate}
\end{lemma}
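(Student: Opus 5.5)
We prove both items by writing $P$ through the integral representation \eqref{eq:Pdef}, i.e.\ $P(t) = C\int_0^{|t|} g(s)\,\d s$ with $g$ convex on $[0,A]$ and $g(0)=0$. Since $C>0$, we may set $C=1$. The main tool is the \emph{second difference} structure of $\I$. For $0 \le y \le x$ with $x+y \le A$ (so $|x-y| = x-y$) we have
\[
\I(x,y) = \big[P(x+y) - P(x)\big] - \big[P(x) - P(x-y)\big] - 2P(y) = \int_0^y \big(g(x+s) - g(x-s)\big)\,\d s - 2\int_0^y g(s)\,\d s .
\]
By the symmetry $\I(x,y)=\I(y,x)$ and evenness, this covers all $x,y\ge 0$. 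Outside the region $x+y\le A$ we have $\I=\infty$, and this region is up-closed in each coordinate, so monotonicity there is automatic once it is established inside. The measure-theoretic boundary case (the Ising limit) is handled by approximation or is excluded, since the lemma is stated for $P$ of the form \eqref{eq:Pdef}.

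\textbf{Item 1.} Fix $y \ge 0$ and differentiate in $x$ (where $g$ is continuous; convex $g$ is continuous on the open interval and absolutely continuous, so we may work with one-sided derivatives or integrate). In the region $x\ge y$,
\[
\partial_x \I(x,y) = g(x+y) + g(x-y) - 2g(x) \ge 0
\]
by convexity of $g$, since $x$ is the midpoint of $x\pm y$. In the region $x \le y$ we use the representation with the roles swapped:
\[
\I = P(x+y) + P(y-x) - 2P(x) - 2P(y),
\]
so that
\[
\partial_x \I = g(x+y) - g(y-x) - 2g(x).
\]
Here we need $g(y+x) - g(y-x) \ge 2g(x) = g(x) - g(-x)$ after extending $g$ oddly to $[-A,A]$. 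The odd extension of a convex $g$ with $g(0)=0$ on $[0,A]$ is generally not convex on the whole interval. Instead we argue directly: convexity with $g(0)=0$ gives that slopes are non-decreasing, so
\[
\frac{g(y+x)-g(y-x)}{2x} \ge \frac{g(2x)-g(0)}{2x} \quad\text{if } y \ge x,
\]
because the chord over $[y-x,y+x]$ lies to the right of the chord over $[0,2x]$. Moreover $g(2x) \ge 2g(x)$ holds by convexity together with $g(0)=0$. Combining these gives $\partial_x\I \ge 0$. We expect this case split to be the main (mild) obstacle: checking that the chord-slope comparison handles $x\le y$ correctly. Monotonicity in $y$ then follows by symmetry.

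\textbf{Item 2.} Under full support ($A=\infty$) we show that $\partial_x \I$ is non-decreasing in $y$. For $x \ge y$,
\[
\partial_x \I = g(x+y) + g(x-y) - 2g(x),
\]
and its $y$-derivative is $g'(x+y) - g'(x-y) \ge 0$, since $g'$ is non-decreasing. For $y \ge x$,
\[
\partial_x\I = g(x+y) - g(y-x) - 2g(x),
\]
whose $y$-derivative is $g'(y+x) - g'(y-x) \ge 0$. Continuity across the diagonal $x=y$ (both formulas equal $g(2x)-2g(x)$) glues the two regimes together. For non-smooth convex $g$ we replace derivatives by increments, or mollify $g$ while preserving convexity and $g(0)=0$. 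The statement for $\partial_y\I$ follows by symmetry of $\I$.
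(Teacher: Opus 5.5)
Your proof is correct. The paper leaves this lemma to the reader, and your direct verification from the convexity of $g$ is the straightforward argument it has in mind: midpoint convexity for $x\ge y$, the three-chord slope comparison plus $g(2x)\ge 2g(x)$ for $x\le y$, and monotonicity of fixed-length increments of $g$ for item 2. The one imprecision is the claimed continuity of $\partial_x\I(x,\cdot)$ across $y=x$. It fails if $g(0^+)<g(0)=0$, which a convex $g$ on $[0,A]$ allows, and then $\partial_x\I(x,\cdot)$ jumps up by $-2g(0^+)>0$ at $y=x$. Your gluing still works, because each of your two formulas is non-decreasing on its closed $y$-interval and both equal $g(2x)-2g(x)$ at $y=x$.
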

We may now finally prove the FKG inequality for $\P_\beta^{+ / +}$, which we do so first under the assumption that $P \in \Ec$ has \emph{full support} i.e.~it is as in \eqref{eq:Pdef} for $A = \infty$, so as to avoid any issues with relative densities which are not strictly positive.
To extend the result for $A \in (0,\infty)$ or for the Ising reference measure $\d\l$ (and therefore the entirety of $\Ec$), it suffices to note that these measures may be suitably approximated by those of full support and the FKG inequality is preserved under weak convergence. 
\begin{proof}[Proof of Proposition \ref{prop:FKG}]
	Our plan will be to argue each of the three claims:
	\begin{enumerate}[(1)]
		\item The law $(\L^+,-\L^-)$ satisfies the FKG inequality under $\P_{\beta,h}$.
		\item Conditionally on $(\L^+,\L^-)$, the percolation $\w^\pm$ satisfies the FKG inequality under $\P_{\beta,h}$. 
		\item The law of $\w^+$ (resp.~$\w^-$) conditionally on $(\L^+,\L^-)$ is increasing (resp.~decreasing) in $(\L^+,-\L^-)$.
	\end{enumerate}
	Together with the crucial conditional independence of $\w^+$ and $\w^-$, these imply the FKG inequality for the entire quadruplet using the so-called \emph{chain rule} of the FKG inequality (see \cite[Proposition 3.10]{glazman2025delocalisation} for similar reasoning). 
	\\[1em]
	\noindent\textbf{Claim (1). }This is the most substantial part of the proof. We will verify the FKG lattice condition for the density of $(\L^+,\L^-)$ under $\P_{\beta,h}$, which implies the desired FKG inequality \cite{FKG}. By summing over $(\w^+,\w^-)$ in \eqref{eq:rotden3}, we find this density relative to $\l_2^G|_{[0,\infty)} \otimes \l_2^G|_{[0,\infty)}$ is proportional to
	\begin{multline}\label{eq:absden}
	\underbrace{Z^{ \overline{G}}(\L^+) \times Z^G(\L^-)}_{\text{Term 1}} \times  \underbrace{ \exp( 2\beta(\L^+,J\L^+) + 2\beta (\L^-,J\L^-)}_{\text{Term 2}} + (2h,\L^+)) \times  \underbrace{\exp( -\textstyle\sum_{u \in V}\I(\L^+_u,\L_u^-) )}_{\text{Term 3}},
	\end{multline} 
	which we denote by $\H(\L^+,\L^-)$. Since $P$ has full support, $\H(\L^+,\L^-)$ is strictly positive on $(0,\infty) \times (0,\infty)$. 
	The condition that we need to show is
	\[
		\H(\L^+ \vee \widetilde{\L^+}, \L^- \wedge \widetilde{\L^-}) \times \H(\L^+ \wedge \widetilde{\L^+},\L^- \vee \widetilde{\L^-}) \geq \H(\L^+,\L^-) \times \H( \widetilde{\L^+}, \widetilde{\L^-}) 
	\]
for every $\L^+,\L^-, \widetilde{\L^+}, \widetilde{\L^-} \in \left[ 0,\infty\right)^V$.
	We deal with each term in turn. 
	\\[1em]
	\noindent\emph{Term 1. }The map $\L \mapsto Z^G(\L)$ is known to satisfy the FKG-lattice condition: 
	\[
		Z^G(\L \vee \L') \times Z^G(\L \wedge \L') \geq Z^G(\L) \times Z^G(\L') \quad\text{ for any } \L,\L' \in \left[ 0,\infty\right)^V, 
	\] 
	and similarly for $\L \mapsto Z^{ \overline{G}}(\L)$ since $h \geq 0$, see e.g.~\cite[Lemma 6.1]{LamOtt}.
	\\[1em]
	\noindent\emph{Term 2. }This is the easiest term. Indeed, we may treat 
	\[
		\exp(2\beta(\L^+,J\L^+)) \quad \text{ and }\quad \exp(2\beta(\L^-,J\L^-))
	\] 
	separately since there is no interaction between them. As we have
	\[
		\frac{\partial^2 }{\partial \L_u \partial \L_v} 2\beta \left( \L, J \L \right)  = 2\beta J_{uv} \geq 0
	\] 
	for any two distinct vertices $u,v \in V$, the map $\L \mapsto \exp(2\beta(\L,J\L))$ is log-convex which is equivalent to the desired lattice condition.
	\\[1em]
	\noindent\emph{Term 3. }This final term is the only place where the interaction between $\L^+$ and $\L^-$ must be considered. Clearly, it suffices to show that
	\[
		\I(x \vee \widetilde{x},y \wedge \widetilde{y}) +  \I(x \wedge \widetilde{x},y \vee \widetilde{y}) - \I(x,y) - \I( \widetilde{x}, \widetilde{y}) \leq 0
	\] 
	for any $x,y, \widetilde{x}, \widetilde{y} \geq 0$. Without loss of generality we may assume that $x \geq \widetilde{x}$ and also that $y \geq \widetilde{y}$ (otherwise the left hand side vanishes). Then, we can rewrite this as an integral,
	\[
		\int_{ \widetilde{x}}^{ x}  \frac{\d }{\d \tau} \I(\tau, \widetilde{y} ) - \frac{\d}{\d \tau}\I(\tau, y) \d \tau
	\] 
	where the integrand is non-positive by Lemma \ref{lemma:Iprops}. 
	\\[1em]
	\noindent\textbf{Claim (2) and (3). }The conditional law of $\w^+$ (resp.~$\w^-$) given $(\L^+,\L^-)$ is the FK-Ising model on $\overline{G}$ (resp.~$G$). The edge parameters are determined by $q_{\L^+}^{ \overline{G}}$ and $q_{\L^-}^G$ (see \eqref{eq:qbardef} and \eqref{eq:qdef}) and are increasing in $\L^+$ and $\L^-$ respectively. 
	Therefore, the final two claims follow from the known FKG and stochastic monotonicity properties of the FK-Ising model \cite{Grimmett}.
\end{proof}
\section{Proof of Theorem \ref{thm:sharpness}}\label{sec:proof2}
The main novelty in Theorem \ref{thm:sharpness} is the following differential inequality. The proof mimics the approach of \cite[Lemma 2.6]{DCT} for the Ising model with the random current replaced by the double cluster swapping from Section \ref{sec:therep}. We will prove an inequality of the form
\begin{equation}\label{eq:roughineq}
	\frac{\d \left< \vp_o \right>_{G_n,\beta,h}}{\d \beta} \geq c_n \times \E^{+ / +}_\beta \left[ \vp_\beta(\S) \i(o \in \S) \right],
\end{equation}
where $\S$ is a \emph{random set} defined in terms of the $(\w^+,\w^-)$ representation. The inequality itself appears from an application of the FKG inequality. 
The argument is technically closer to simplified but weaker version than in \cite{DCT} (unpublished but reproduced in \cite[Appendix]{lis2019circle}) since we lower bound the derivative of $\left< \vp_o \right>_{G_n,\beta,h}$, rather than that of its square $\left< \vp_o \right>_{G_n,\beta,h}^{2}$, which is why the resulting exponent in \eqref{eq:exponent} is $1$ and not $1/2$. 
We must also handle the fact that the spins are continuous rather than discrete to stop the constant $c_n$ in \eqref{eq:roughineq} from degenerating to zero as $n \to \infty$.

\begin{proposition}[]\label{prop:mainineq}
	Let $P \in \Ec$ and $\eps > 0$. There exists $c = c(\eps) > 0$ such that for every $\beta \in (0,1/\eps)$, $h > 0$ and $n \geq 1$,
	\begin{equation}\label{eq:mainineq}
		\liminf_{n \to \infty} \frac{\d \left< \vp_o \right>_{G_n,\beta,h}}{\d \beta} \geq \frac{c}{\beta} \left( \inf_S \vp_\beta(S)  \right) \left( 1 - \frac{ 2\left< \vp_o \right>_{\beta}^+}{  \left< | \vp_o|  \right>_{\beta}^+}\right).
	\end{equation} 
\end{proposition}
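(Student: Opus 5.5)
We follow the Duminil-Copin--Tassion strategy, using the double cluster swapping representation $\P^{+/+}_\beta=\P_{G_n,\beta,h}$ in place of double random currents. Work on $G=G_n$ with field $h>0$, so that $\w^+$ lives on $\overline{G}$.

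\textbf{Step 1: derivative as a truncated correlation.} Differentiating gives
\[
\frac{\d\langle\vp_o\rangle_{G,\beta,h}}{\d\beta}=\sum_{uv\in E}J_{uv}\big(\langle\vp_o\vp_u\vp_v\rangle-\langle\vp_o\rangle\langle\vp_u\vp_v\rangle\big).
\]
By \eqref{eq:corr3}, each summand equals
\[
2\E^{+/+}_\beta\big[\L^-_o\L^-_u\L^+_v\,\i(o\overset{\w^-}{\longleftrightarrow}u,\ v\overset{\w^+}{\longleftrightarrow}\g)\big]+(u\leftrightarrow v).
\]

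\textbf{Step 2: the random set.} Define $\S=\{x\in V: x\overset{\w^+}{\centernot\longleftrightarrow}\g\}$, the complement of the ghost cluster of $\w^+$. On $\{o\in\S\}$, every edge $uv$ with $u\in\S$ and $v\notin\S$ has $v\overset{\w^+}{\longleftrightarrow}\g$. So it suffices to lower bound
\[
\E\big[\L^-_o\L^-_u\L^+_v\,\i(o\overset{\w^-}{\longleftrightarrow}u)\,\i(u\in\S,\ v\notin\S)\big].
\]
Condition on $\S=S$ and on $(\L^\pm,\w^\pm)$ restricted to $V\setminus S$. The event $\{\S=S\}$ is determined by $\w^+$ closing the boundary of $S$ together with the ghost edges in $S$. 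It is a decreasing event for $\w^+$ on $S$, measurable on the inside, plus the outside data. The conditional law inside $S$ is again a representation of the same type, and we want to compare it with $\P_{S,\beta,0}$. The restriction of $\w^-$ to $E(S)$ only makes connections harder, so $o\overset{\w^-}{\longleftrightarrow}u$ in $S$ suffices. By the FKG inequality (Proposition~\ref{prop:FKG}), conditioning on a decreasing event for $(\L^+,\w^+)$ increases $(\L^-,\w^-)$ stochastically. Hence the inner expectation of $\L^-_o\L^-_u\i(o\overset{\w^-}{\longleftrightarrow}u\text{ in }S)$ dominates its value under the field-free, boundary-free measure on $S$. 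The boundary couplings to $V\setminus S$ and the $\w^+$-closure constraint only reweight by decreasing functions of $\L^+$, which favour $\L^-$. Applying \eqref{eq:corr2} on $S$ with $h=0$, that value equals $\tfrac12\langle\vp_o\vp_u\rangle_{S,\beta,0}$. This yields
\[
\frac{\d\langle\vp_o\rangle}{\d\beta}\ \ge\ \E\Big[\i(o\in\S)\sum_{u\in\S,\,v\notin\S}J_{uv}\langle\vp_o\vp_u\rangle_{\S,\beta,0}\,\L^+_v\Big].
\]
We still need to replace $\L^+_v$ by a constant.

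\textbf{Step 3: removing $\L^+_v$ and bounding $\P(o\in\S)$ (main obstacle).} For Ising, $\L^+_v=1$ on $\{v\notin\S\}$. For continuous spins, $\L^+_v$ could be small, so we need a uniform lower bound on $\E[\L^+_v\mid\S=S,\ \text{data}]$. We plan to condition further on everything except the spin at $v$ and use the single-site law of $\L^+_v$. This law has density proportional to $\exp(-\I(\cdot,\L^-_v))\,\d\l_2$ times couplings that are increasing in $\L^+_v$. By Lemma~\ref{lemma:stochdom} it dominates the law with the couplings removed. What remains is the decreasing factor $\exp(-\I)$, and it must be controlled through $\L^-_v$, which is again dominated using negative association. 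We expect this to give a constant $c(\eps)>0$ for $\beta<1/\eps$; it is the content of the technical Lemma~\ref{lemma:technical}. Bounding $\sum_{u,v}$ below by $\inf_S\vp_\beta(S)/\beta$ then gives the prefactor $c/\beta\cdot\inf_S\vp_\beta(S)$.

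Finally, $\P(o\in\S)$ must be bounded below. By \eqref{eq:corr1} with the ghost, $\langle\vp_o\rangle=2\E[\L^+_o\i(o\overset{\w^+}{\longleftrightarrow}\g)]$. In the limit $n\to\infty$, a comparison via FKG of $\L^+_o$ with $\langle|\vp_o|\rangle/2$ on the ghost-cluster event gives $\P(o\notin\S)\lesssim 2\langle\vp_o\rangle^+/\langle|\vp_o|\rangle^+$. This produces the last factor, after taking $\liminf_n$.
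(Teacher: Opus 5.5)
Your architecture is the paper's: the derivative via \eqref{eq:corr3}, the random set $\S$ (the complement of the ghost cluster of $\w^+$), an FKG comparison with $\Q_{\S,\beta}$ producing $\tfrac12\langle\vp_o\vp_u\rangle_{\S,\beta,0}$, Lemma~\ref{lemma:technical} for $|\vp^+_v|$, and FKG for $(\L^+,\w^+)$ to bound $\P(o\in\S)$. There is, however, a genuine gap at the step ``bounding $\sum_{u,v}$ below by $\inf_S\vp_\beta(S)/\beta$''.

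In $G_n$ your $v$-sum runs only over $v\in G_n\setminus\S$, since there is no $\L^+_v$ for $v\notin G_n$. By contrast, $\vp_\beta(\S)$ sums over all $v\in\G\setminus\S$. The set $\S$ can reach the boundary of $G_n$; for example $\S=G_n$ has positive probability, and then your sum is empty. So your finite-volume quantity does not dominate $\vp_\beta(\S)/\beta$, and nothing in your argument removes the missing contribution. This is exactly where $h>0$ and the $\liminf_n$ in the statement are needed.

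The paper fixes this by adding and subtracting the terms with $u\in\S$, $v\in\G\setminus G_n$, with weight $1$ in place of $|\vp^+_v|$. After the comparison and Lemma~\ref{lemma:technical}, the added terms complete $\vp_\beta(\S)$. By \eqref{eq:corr2}, the subtracted term is at most $\varepsilon_\beta(G_n,h)=\sum_{u\in G_n,\,v\notin G_n}J_{uv}\langle\vp_o;\vp_u\rangle_{G_n,\beta,h}$. This tends to $0$ as $n\to\infty$ by the GHS inequality, which is valid for $P\in\Ec$. Some input of this kind is indispensable.

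Your heuristic for Lemma~\ref{lemma:technical} is also wrong, although this matters only if you meant to prove it rather than cite it as the paper does. On $\{\S=S\}$ the $\w^+$-edges from $v$ into $S$ are forced closed. Combined with the Hamiltonian, they produce the \emph{decreasing} factor $\exp(-2\beta\sum_{u\in S}J_{uv}\L^+_u\L^+_v)$; if you condition on all of $\w^+$, every closed edge at $v$ gives such a factor. So the single-site law is $\propto\exp(-\gamma x-\I(x,\L^-_v))\,\d\l_2(x)$ with $\gamma=2\beta\sum_{u\in S}J_{uv}\L^+_u$. You therefore must control $\L^+_S$ near $v$, not only $\L^-_v$. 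Since $\{\S=S\}$ is not a monotone event, negative association under $\P$ does not directly give either bound. The paper instead shows that $\P(\cdot\mid\S=S)$ dominates an auxiliary measure $\mu^{(n)}_{\beta;S}$ in the reordered lattice $(\L^+_X,-\L^+_S,-\L^-)$. It then dominates $(\L^+_S,\L^-)$ under $\mu^{(n)}_{\beta;S}$ by absolute values of spins at $(2\beta,2P)$ and applies Lemma~\ref{lemma:massive}.

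Two smaller points in your Step~2:
\begin{itemize}
\item Conditioning on $\L^+$ outside $S$ fixes $\L^+_v$. The right order is to compare given $(\S,\L_{\S^c})$, pull out the $\L_{\S^c}$-measurable factor $\L^+_v$, and only then condition on $\S$ alone.
\item The comparison with $\Q_{\S,\beta}$ does not follow from FKG for $\P$ by itself. It comes from the relative density of $(\L^+_\S,\L^-_\S)$ with respect to $\Q_{\S,\beta}$ being decreasing in $\L^+_\S$ \emph{and} increasing in $\L^-_\S$. The latter holds because of the $\L^-$-couplings across the boundary of $\S$ and the non-factorising FK partition function of $\w^-$. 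This is then combined with FKG for $\Q_{\S,\beta}$ and Lemma~\ref{lemma:stochdom}.
\end{itemize}
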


Actually, we will prove the following finite-volume version of this statement:
\begin{equation}\label{eq:ineqfin}
	 \frac{\d \left< \vp_o \right>_{G_n,\beta,h}}{\d \beta} \geq \frac{c}{\beta} \times ( \inf_S \vp_\beta(S))  \left( 1 - \frac{2 \left< \vp_o \right>_{G_n,\beta,h}}{  \left< | \vp_o|  \right>_{G_n,\beta,h}}\right) - \varepsilon_\beta(G_n,h),
\end{equation} 
where 
\[
	\varepsilon_\beta(G_n,h) = \sum_{\substack{u \in G_n \\ v \in \G \setminus G_n}} J_{uv} \left< \vp_o; \vp_u \right>_{G_n,\beta,h}.
\]
Here and from now on, we use the standard notation for truncated correlations e.g.~$\left< \vp_0; \vp_u \right> = \left< \vp_o \vp_u \right>_{}^{} - \left< \vp_o \right>_{}^{} \left< \vp_u \right>_{}^{}$.
It can be argued that $\varepsilon_\beta(G_n,h) \to 0$ as $n \to \infty$ by using the GHS inequality (see \cite[Remark 2.3]{DCT}), which is valid for $P \in \Ec$. Therefore, Proposition \ref{prop:mainineq} follows from \eqref{eq:ineqfin} by taking $n \to \infty$.

\begin{proof}[Proof of Proposition \ref{prop:mainineq}]
	We start by expressing the left-hand side of \eqref{eq:ineqfin} in terms of certain truncated correlations
	{\hypersetup{linkcolor=black}\footnote{Note that we have symmetrised in order to sum over $u,v \in G_n$ rather than $uv \in E(G_n)$}}
	\[
		\frac{\d \left< \vp_o \right>_{G_n,\beta,h}} {\d \beta} = \frac{1}{2} \sum_{u,v \in G_n} J_{uv}\left< \vp_o; \vp_u \vp_v \right>_{G_n,\beta,h}.
	\] 
	Using our representation and the identity \eqref{eq:corr3}, this is equal to
	\[
		\sum_{u,v \in G_n} J_{uv} \E^{+ / +}_{\beta} \left[ |\vpm_o \vpm_u\vpp_v| \i(o \overset{\w^-}{\longleftrightarrow} u, v \overset{\w^+}{\longleftrightarrow} g) \right].
	\] 
	We define the set $\S$ by\footnote{Note that $\S$ is possibly non-connected.} 
	\[
		\S=\Big\{ v \in G_n \mid v \overset{\w^+}{\centernot{\longleftrightarrow}} \g 
		\Big\}.
	\]
	See Figure \ref{fig:Sdef2} for an illustration of the proof.
	\begin{figure}[t]
	\centering
	\includegraphics[scale=0.7]{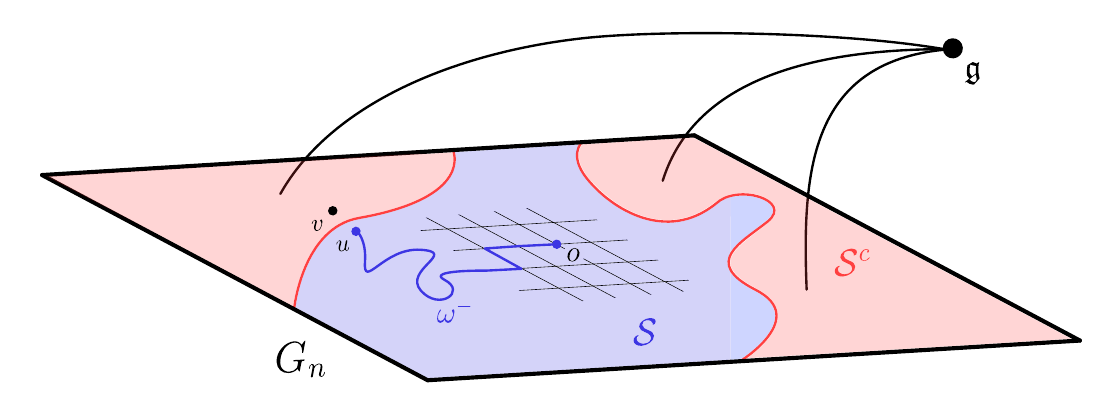}
	\caption{A illustration of the notation and setting in the proof of Proposition \ref{prop:mainineq}.}\label{fig:Sdef2}
\end{figure}
	We obtain a lower bound by only summing over $u \in \S$ and further requiring $o$ to be connected to $u$ inside $\S$,
	\begin{equation}\label{eq:midstep}
		\frac{\d \left< \vp_o \right>_{G_n,\beta,h}} {\d \beta} \geq 
		\E^{+ / +}_{\beta} \Bigg[ \sum_{\substack{u \in \S \\ v \not\in \S}} J_{uv}|\vpm_o \vpm_u\vpp_v| \i(o \underset{\S}{\overset{\w^-}{\longleftrightarrow}} u) \Bigg] =: A - B,
	\end{equation}
	where we have added and subtracted the terms corresponding to $v \in \G \setminus G_n$ to define
	\begin{equation}
	A = \E^{+ / +}_\beta\left[ \sum_{u \in \S} |\vpm_o\vpm_u|\i(o \underset{\S}{\overset{\w^-}{\longleftrightarrow}} u)\Big( \sum_{v \in G_n \setminus \S} J_{uv}|\vpp_v| + \sum_{v \not\in G_n} J_{uv}\Big)\right] 
\end{equation}
and
\begin{equation}
	B =  \E^{+ / +}_\beta\left[\sum_{\substack{u \in \S \\ v \not\in G_n}} |\vpm_o\vpm_u|\i(o \underset{\S}{\overset{\w^-}{\longleftrightarrow}} u) \right]. 
	\end{equation} 
	\paragraph{Step 1. }We now claim that conditionally on $\S$ \emph{and on} $\L_{\S^c} := ( \L^+_v, \L^-_v)_{v \not\in \S}$, we have
	\[
		\E_\beta^{+ / +}\left[|\vpm_o\vpm_u| \i(o \underset{\S}{\overset{\w^-}{\longleftrightarrow}} u) \mid \S, \L_{\S^c} \right] \geq \frac{1}{2}\left< \vp_o \vp_u \right>_{\S,\beta,0},
	\] 
	for any $o,u \in \S$.
	\\[1em]
	\noindent This is the main application of the FKG inequality. For any $S \subset G_n$, let $\Q_{S,\beta}$ be the coupling of $(\L^+_S,\L^-_S,\w^+_S,\w^-_S)$ arising from the measure $\left< \cdot \right>_{S,\beta,0} \otimes \left< \cdot \right>_{S,\beta,0}$. We will show 
	\begin{align}\label{eq:keystep}
		\E_{\beta}^{+ / +}\left[ |\vpm_o \vpm_u| \i(o \underset{\S}{\overset{\w^-}{\longleftrightarrow}} u)\mid \S , \L_{S^c}  \right] \geq \Q_{\S,\beta}\left[ |\vpm_o \vpm_u| \i(o \overset{\w^-}{\longleftrightarrow} u) \right]  = \frac{1}{2} \left< \vp_o \vp_u \right>_{\S,\beta,0},
	\end{align} 
	where the equality is due to \eqref{eq:corr1}. The inequality follows from a stochastic domination between the two measures, which as in the proof of Proposition \ref{prop:FKG} we need only prove for $(\L^+,\L^-)$. Indeed, by using \eqref{eq:rotden3} we see that the relative density of $(\L^+_\S,\L^-_\S)$ in $\P_\beta^{+ / +}(\cdot \mid \S,\L_{\S^c})$ to $\Q_{\S,\beta}$ is proportional to 
	\[
		\exp\Big( 2\beta \sum_{\substack{u \in \S \\ v \not\in S}} (\L^-_u \L^-_v - \L^+_u\L^+_v) - 2(h,\L^+)_S\Big) \times \frac{\sum_{\w^- \subset E(G_n)} 2^{k(\w)} q^{G_n}_{\L^-}(\w^-) }{\sum_{\w^- \subset E(G_n)} \i(\S \overset{\w^-}{\centernot{\longleftrightarrow}} \S^c) 2^{k(w)} q^{G_n}_{\L^-}(\w^-) }.
	\] 
	This expression is decreasing as a function of $(\L^+_\S,-\L^-_\S)$, which for the second factor is due to the standard monotonicity of the random-cluster model \cite{Grimmett}. Using Lemma \ref{lemma:stochdom}, the FKG property of $\Q_{\S,\beta}$ is all that is needed to conclude.  
	\paragraph{Step 2. } Our second claim is that there exists a constant $c > 0$ (only depending on $\G$, $P$ and $\eps$) such that conditionally \emph{on just} $\S$,
	\[
		\E^{+ / +}_\beta\left[ |\vp^+_v| \mid \S \right] \geq 2c,
	\] 
	for any $\beta \geq 0$ and any $v \not\in \S$. Note that on the event $v \not\in \S$ it must be that $|\vp^+_v| > 0$ as otherwise no edge incident to $v$ can be open in $\w^+$ . Thus the claim is immediate if the potential $P$ is of Ising type, and otherwise it is the content of Lemma \ref{lemma:technical}.
	\paragraph{Step 3.} By conditioning on $\S$ and $\L_{\S^c}$ and then applying Step 1 to the term $A$, we get
	\begin{equation*}
		A \geq \frac{1}{2} \times \E^{+ / +}_\beta\left[ \sum_{u \in \S} \left< \vp_o\vp_u \right>_{\S,\beta,0}( \sum_{v \in G_n \setminus \S} J_{uv}|\vpp_v| + \sum_{v \not\in G_n} J_{uv})\right]
	\end{equation*}
	Then, conditioning on just $\S$ and applying Step 2, the same term is lower bounded by 
	\[
		c \times \E^{+ / +}_{\beta} \Bigg[ \i(o \in \S) \sum_{\substack{ u \in \S \\ v \in \G \setminus \S}}J_{uv} \left< \vp_o \vp_u \right>_{\S,\beta,0} \Bigg] = c \times \E^{+ / +}\Bigg[ \i(o \in \S) \vp_\beta(\S) \Bigg].
	\] 
	Now, we simply note that for the second term in \eqref{eq:midstep}, we have
	\[
		B \leq  \E^{+ / +}_\beta\left[ \sum_{\substack{ u \in G_n \\ v \in \G \setminus G_n}} J_{uv} |\vpm_o\vpm_u |\i(o \overset{\w^-}{\longleftrightarrow} u)\right],
	\] 
	which is exactly equal to $\eps_\beta(G_n,h)$ by \eqref{eq:corr2}. All together, 
	\[
		\frac{\d \left< \vp_o  \right>_{G_n,\beta,h}}{\d \beta} \geq c \times \E^{+ / +}_\beta \left[ \i(o \in \S) \vp_\beta(S) \right] - \eps_\beta(G_n,h).
	\] 
	\paragraph{Step 4. } It only remains to show that
	\[
		\P^{ + / +}_\beta(o \in \S) \geq 1 - \frac{2\left< \vp_o \right>_{G_n,\beta,h}}{\left<|\vp_o| \right>_{G_n,\beta,h}}.
	\] 
	Indeed, by the FKG inequality (Proposition \ref{prop:FKG}) for $(\L^+,\w^+)$, we have
	\[
		\P^{ + / +}_\beta(o \not\in \S) = \P^{+ / +}_\beta(o \overset{\w^+}{\longleftrightarrow} \g) \leq \frac{\E^{+ / +}[ |\vpp_o|\i(o \overset{\w^+}{\longleftrightarrow} \g)]}{\E^{+ / +}\left[ |\vpp_o| \right] }
	\]
	The numerator is exactly $\left< \vp_o \right>_{G_n,\beta,h}$ and the denominator is at least{\hypersetup{linkcolor=black}\footnote{Here we used that if $X$ and $Y$ are symmetric independent random variables then $\mathbb{E}|X+Y| \geq \mathbb{E}|X|$.}} $ \left< |\vp_o| \right>_{G_n,\beta,h} / 2$, completing the proof of \eqref{eq:ineqfin}.
\end{proof}

For completeness we end this section by recalling how the dichotomy in Theorem \ref{thm:sharpness} arises from Proposition \ref{prop:mainineq}. The argument is very close to \cite{DCT} with only a few minor changes. 

\begin{proof}[Proof of Theorem \ref{thm:sharpness}]
	We consider a critical value given by
	\[
	\widetilde{\beta_c} = \sup \left\{  \beta \geq 0 \mid \exists S \text{ such that } \vp_\beta(S) < 1 \right\} 
	\] 
	Under the assumptions on $J$ and $P$ (notably \eqref{eq:Jasum}), it is clear that $ \widetilde{\beta_c}$ is strictly positive. We note that the disordered phase must occur for any $\beta < \widetilde{\beta_c}$ by Lemma \ref{lemma:expdecay}. Therefore, it only remains to consider the case when $ \widetilde{\beta_c} < \infty$ and to prove that there exists $ \widetilde{c} > 0$ such that
	\begin{equation}\label{eq:tildineq}
	\left< \vp_o \right>_{\beta}^{+} \geq \widetilde{c} \times \frac{\beta - \widetilde{\beta_c}}{\beta}
	\end{equation}
	for all $\beta \geq \widetilde{\beta_c}$. 
	
	Choose any $\eps > 0$ so that $ \widetilde{\beta_c} < 1/\eps$, thus there is $C = C(\eps) > 0$ from Lemma \ref{lemma:massive} so that
	\[
		\liminf_{n \to \infty} \frac{\d \left< \vp_o \right>_{G_n,\beta,h}}{\d \beta} \geq \frac{c}{\beta} \left( 1 - \frac{ 2\left< \vp_o \right>_{\beta}^+}{C}\right).
	\] 
	for any $ \widetilde{\beta_c} \leq \beta \leq 1 / \eps$. Integrating this inequality and taking $n \to \infty$, we see that 
	\[
		\left< \vp_o \right>_{\beta}^{+} \geq\Big ( \frac{c}{2} \times \frac{\beta - \widetilde{\beta_c}}{ \beta}\Big) \wedge \frac{C}{4} \quad \text{ for all $ \widetilde{\beta_c} \leq \beta \leq 1 / \eps.$} 
	\] 
	which implies \eqref{eq:tildineq} for all $\beta \geq \widetilde{\beta_c}$ with $\widetilde{c} = (C / 4) \wedge (c \times (1 - \eps \widetilde{\beta_c}) / 2)$.
\end{proof}
\section{A technical lemma}\label{sec:technical}

Before proceeding, we state an estimate that we will use frequently (but which is only relevant when the support of $\l^{(P)}$ is unbounded). 
It is an immediate consequence of Assumption \ref{asum:two} and the Griffiths inequalities.  
\begin{lemma}\label{lemma:massive}
	Let $\eps > 0$. Then, there exists $C = C(\eps,P,\G)$ such that 
	\begin{equation}\label{eq:massive}
	\left< |\vp_o| \right>_{\G,\beta,h} \leq C
	\end{equation} 
	for all $0 \leq \beta \leq 1 / \eps$ and $0 \leq h \leq 1$.
\end{lemma}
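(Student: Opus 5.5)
The natural route is to reduce to a second-moment bound and then exploit monotonicity in $\beta$ and $h$ given by the Griffiths inequalities. By Cauchy--Schwarz, $\left< |\vp_o| \right>_{\G,\beta,h} \leq \left< \vp_o^2 \right>_{\G,\beta,h}^{1/2}$, so it suffices to bound $\left< \vp_o^2 \right>_{\G,\beta,h}$ uniformly over $0 \leq \beta \leq 1/\eps$ and $0 \leq h \leq 1$.

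\emph{Step 1: monotonicity.} The Griffiths (GKS) inequalities hold for any even reference measure $\l$. They give that for each finite $G_n$ the quantity $\left< \vp_o^2 \right>_{G_n,\beta,h}$ is non-decreasing in $\beta$ and in $h \geq 0$. Indeed, the derivative in $\beta$ is $\sum_{uv} J_{uv} \left< \vp_o^2; \vp_u\vp_v \right> \geq 0$ by Griffiths II. The derivative in $h$ is $\sum_u \left< \vp_o^2; \vp_u \right> \geq 0$, again by Griffiths II once the field is viewed as a coupling to a ghost spin fixed at $+1$.

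\emph{Step 2: passage to infinite volume.} These monotonicities pass to the weak limit. Some care is needed here, since weak convergence alone does not transfer unbounded observables. I would handle this by truncating: apply Step 1 to $\min(\vp_o^2, M)$, or rather use the uniform integrability that Assumption~\ref{asum:two} implicitly provides. Alternatively, one can note that $\left< \vp_o^2 \right>_{G_n,\beta,h}$ is also non-decreasing in $n$ by Griffiths II (adding edges increases even correlations). It therefore converges monotonically, and its limit is the quantity required to be finite in Assumption~\ref{asum:two}.

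\emph{Step 3: conclusion.} Combining the two steps gives
\[
\left< \vp_o^2 \right>_{\G,\beta,h} \leq \left< \vp_o^2 \right>_{\G,1/\eps,1} < \infty
\]
for all $\beta \leq 1/\eps$ and $h \leq 1$, where finiteness on the right is Assumption~\ref{asum:two}. Setting $C = \left< \vp_o^2 \right>_{\G,1/\eps,1}^{1/2}$ finishes the proof. Translation invariance ensures $C$ does not depend on the choice of $o$.

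The only delicate point is Step 2, namely interchanging the monotone bound with the limit for the unbounded observable $\vp_o^2$. I would handle it by the monotone-in-$n$ argument, which identifies the infinite-volume second moment as an increasing limit and makes both the comparison and the finiteness immediate.
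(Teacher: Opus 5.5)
Your proposal is correct and is the argument the paper intends when it calls the lemma an immediate consequence of Assumption~\ref{asum:two} and the Griffiths inequalities: Griffiths monotonicity in $\beta$ and in $h \geq 0$ reduces everything to the single point $(\beta,h) = (1/\eps,1)$, Assumption~\ref{asum:two} gives finiteness there, and Cauchy--Schwarz passes from $\langle \vp_o^2 \rangle$ to $\langle |\vp_o| \rangle$. The one step to tighten is Step~2: monotonicity in $n$ plus weak convergence only gives $\langle \vp_o^2 \rangle_{\G,\beta,h} \leq \lim_n \langle \vp_o^2 \rangle_{G_n,\beta,h}$ (Fatou), so identifying that limit with the second moment of the weak limit at $(1/\eps,1)$ needs convergence of second moments (uniform integrability), which you, like the paper, are tacitly reading into Assumption~\ref{asum:two}.
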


The goal of this section is to prove the following lemma where we stress again that the conclusion is immediate if the potential $P$ is of Ising type (as explained in the proof of Proposition \ref{prop:mainineq}).
Therefore, from now on we may assume that $P$ is as in \eqref{eq:Pdef}. 
We assume the setting and notation in the proof of Proposition \ref{prop:mainineq}. 
\begin{lemma}\label{lemma:technical}
	There exists $c = c(\eps) > 0$ such that for every $\beta \in (0,1/\eps)$ and each $n \geq 1$ and finite set $S \subset G_n$ with $v \not\in S$, 
	\[
		\E^{+ / +}_{\beta,h}\left[ \, |\vp^+_v| \mid \S = S \right] \geq c.
	\]
\end{lemma}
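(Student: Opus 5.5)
We will argue by conditioning on $\S = S$ and analysing the conditional law of $\L^+_v$ for a fixed $v \notin S$. On the event $\{\S = S\}$ every vertex of $G_n \setminus S$ is $\w^+$-connected to the ghost. Call this set $T = G_n\setminus S$. The event $\{\S=S\}$ depends only on $\w^+$, so we will work with the joint density \eqref{eq:rotden3} of $(\L^+,\L^-,\w^+,\w^-)$ and decompose the conditioning into three parts. First we sum out $\w^-$, which yields $Z^{G_n}(\L^-)$. Then we condition additionally on $\L^+_{G_n\setminus\{v\}}$, on $\L^-$, and on $\w^+$ restricted to edges not incident to $v$. Whatever the values of these variables, the conditional density of $x=\L^+_v$ relative to $\l_2|_{[0,\infty)}$ will then be proportional to
\[
\exp\big(4\beta\, x\,(J\L^+)_v + 2hx - \I(x,\L^-_v)\big)\times \Pi_v(x),
\]
where $\Pi_v(x)$ is the conditional probability, in the FK factor, that $v$ is joined to the ghost cluster (or to $T$) through the edges incident to it. It is enough to prove a uniform lower bound $\E[x \mid \cdots]\ge c$ under this one-dimensional density, because averaging it then gives the lemma.

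The key monotonicity step is the following. The factor $\exp(4\beta x (J\L^+)_v + 2hx)$ is increasing in $x$. The connection factor $\Pi_v(x)$ has the form $1-\prod_{e}e^{-4J_e x \L^+_w}$ over the relevant edges $e=vw$ (including $v\g$ with weight $h$), and is therefore also increasing in $x$. The only decreasing factor is $e^{-\I(x,y)}$ with $y=\L^-_v$, by Lemma \ref{lemma:Iprops}(1). So Lemma \ref{lemma:stochdom} (with the one-dimensional product measure $\l_2|_{[0,\infty)}$, which trivially satisfies FKG) will show that the conditional law of $x$ stochastically dominates the law with density proportional to $e^{-\I(x,y)}\,\d\l_2(x)$. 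For $P$ of the form \eqref{eq:Pdef}, this law is exactly the conditional law of $|(\vp+\vp')/2|$ given $|(\vp-\vp')/2|=y$ for $\vp,\vp'$ i.i.d.\ $\l$. We then need
\[
m(y):=\frac{\int x e^{-\I(x,y)}\d\l_2(x)}{\int e^{-\I(x,y)}\d\l_2(x)}
\]
to be bounded below uniformly in the relevant $y$. This is the main obstacle. The quantity $m(y)$ may tend to $0$ as $y\to\infty$ for $\vp^4$-type potentials, because $\I(x,y)=12gx^2y^2$ forces $x\lesssim 1/y$. For $y$ in a compact set $[0,M]$, continuity and positivity of the integrand on the support give $\inf_{y\le M} m(y)>0$. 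We will then control the probability that $\L^-_v>M$ uniformly.

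To finish, we will repeat the same stochastic-domination argument for $\L^-_v$ under the conditioning. The FKG structure of Proposition \ref{prop:FKG} implies that conditioning on $\{\S=S\}$, which is increasing-type information on $\w^+$ near $v$, pushes $\L^-_v$ down. More concretely, given everything else, the density of $\L^-_v$ is proportional to $\exp(4\beta y (J\L^-)_v)\,Z$-factor$\,\times e^{-\I(\L^+_v,y)}$. We would like to bound $\L^-_v$ by a quantity controlled via Lemma \ref{lemma:massive}, but that would require uniform moment bounds on $(J\L^-)_v$ under the conditioning. Rather than do this, the cleaner route is to use a monotone coupling. Since $(\L^+,-\L^-,\w^+,-\w^-)$ is FKG and $\{\S=S\}$ is of the form $\{T\overset{\w^+}{\longleftrightarrow}\g\}\cap\{S\overset{\w^+}{\centernot\longleftrightarrow}\g\}$, the conditioning on the boundary cluster only increases $(\L^+, -\L^-)$ near $v$. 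Hence $\E[\L^-_v\mid \S=S]\le \E[\L^-_v]\le \langle|\vp_v|\rangle\le C(\eps)$ by Lemma \ref{lemma:massive}, after justifying this domination via a local version of Lemma \ref{lemma:stochdom}. Markov's inequality then gives $\P(\L^-_v>M\mid\S=S)\le 1/2$ for $M=2C$, and combining the two bounds yields $\E[|\vp^+_v|\mid\S=S]\ge \tfrac12\inf_{y\le 2C}m(y)=:c(\eps)>0$. Justifying this domination carefully, since the event $\{\S = S\}$ is not monotone, is where we expect most of the work to lie.
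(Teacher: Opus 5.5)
There is a genuine gap. It sits in what you call the key monotonicity step, and it carries over into the tail estimate.

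\emph{The one-site conditional density is misidentified.} On $\{\S=S\}$ the vertex $v$ lies in the ghost cluster of $\w^+$ while no vertex of $S$ does, so every $\w^+$-edge $vu$ with $u\in S$ must be closed.

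Each such edge contributes $e^{-4\beta J_{vu}x\L^+_u}$. Combined with the Gibbs weight $e^{2\beta J_{vu}x\L^+_u}$, this leaves a net factor $e^{-2\beta J_{vu}x\L^+_u}$, which is \emph{decreasing} in $x=\L^+_v$. This is exactly the term $-2\beta\sum_{x\in X,y\in S}J_{xy}\L^+_x\L^+_y$ in \eqref{eq:compden}.

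So $e^{-\I}$ is not the only decreasing factor. The most you can dominate by is $e^{-\gamma x-\I(x,y)}\,\d\l_2(x)$ with $\gamma=2\beta\sum_{u\in S}J_{uv}\L^+_u$. Its mean is the paper's $F(y,\gamma)$, which tends to $0$ as $\gamma\to\infty$. You therefore also need to control $\L^+_u$ for the $S$-neighbours $u$ of $v$ under the conditioned law, not just $\L^-_v$. Your plan does not address this, and for unbounded support it is not automatic.

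\emph{The tail control is the missing idea, not a technicality.} The event $\{\S=S\}$ is neither increasing nor decreasing, and $\P(\S=S\mid\L^+,\L^-)$ is not monotone in $\L^+$. Proposition~\ref{prop:FKG} therefore gives no comparison with the unconditioned measure, and your inequality $\E[\L^-_v\mid\S=S]\le\E[\L^-_v]$ has no justification. Heuristically, the conditioning lowers $\L^+_S$, hence raises $\L^-_S$, which feeds into $\L^-_v$ through the ferromagnetic $\L^-$-interaction.

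The paper instead compares with the auxiliary measure $\mu^{(n)}_{\beta;S}$ of \eqref{eq:mudef}. It drops from \eqref{eq:compden} the spanning sum, the $X$-internal and field terms, and $e^{-\I}$ on $S$. These dropped factors are increasing in the \emph{mixed} order $(\L^+_X,-\L^+_S,-\L^-)$, and $\mu^{(n)}_{\beta;S}$ satisfies FKG in that order. This gives $\P(\cdot\mid\S=S)\succeq\mu^{(n)}_{\beta;S}$ (Lemma~\ref{lemma:prelimlem}).

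Under $\mu^{(n)}_{\beta;S}$, the conditional law of $\L^+_v$ given $(\L^-,\L^+_S)$ is exactly $\propto e^{-\gamma x-\I(x,\L^-_v)}\d\l_2(x)$. Moreover, $(\L^+_S,\L^-)$ is dominated by the absolute values of independent samples with $(\beta,P)$ replaced by $(2\beta,2P)$. Lemma~\ref{lemma:massive} then bounds $\gamma$ and $\L^-_v$ simultaneously.

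Finally, for bounded support your $m(y)$ tends to $0$ as $y\uparrow A$. A Markov bound with $M=2C$ is then useless unless you show the mean of $\L^-_v$ is at most $(1-\delta)A$. The paper obtains this from two-sided bounds on the single-site density of the doubled model.
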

It is key that the constant $c$ does not depend on $n \geq 1$. We are faced with the problem of proving a rather simple estimate but under a complicated conditional measure. Our strategy will be to rely on the FKG theory to prove a suitable stochastic domination comparing this complicated measure with a simpler measure. We are then left with the easier task of proving this simple estimate under the (relatively) simple measure. We start by defining said measure.
\\[1em]
\noindent We begin with some notation. Fixing $S$ from now, we define $X = S^c$ and decompose
\[
\L^+ = (\L^+_S,\L^+_X)
\] 
where $\L^+_S = (\L^+_u)_{u \in S}$ and $\L^+_{X} = (\L^+_x)_{x \in X}$. We will also heavily use the notation from Section \ref{sec:corrineq}.
The most important tool in the proof is the following auxillary measure. Its definition is best motivated by the form of the relative density \eqref{eq:relden}. Indeed, we have essentially dropped each factor in \eqref{eq:compden} which is increasing in $\L^+_X$, leaving us with the definition of $\mu_{\beta;S}^{(n)}$ in \eqref{eq:mudef} which we can then prove the stochastic domination for.
\begin{definition}[]
	We define a measure $\mu_{\beta;S}^{(n)}$ on $(\L^+,\L^-) \in \left[0,\infty \right)^{G_n} \times \left[ 0,\infty \right)^{G_n}$ via
	\begin{multline}\label{eq:mudef}
		\d\mu_{\beta;S}^{(n)}(\xi^+,\xi^-) \propto Z^{G_n}(\L^-)  Z^{S}(\L^+) \times \exp\Big( 2\beta(\L^-,J\L^-) + 2\beta(\L^+,J\L^+)_S - 2\beta \sum_{\substack{x \in X \\ y \in S}} J_{xy} \L^+_x \L^+_y\Big) \\ \times \exp(- \textstyle\sum_{x \in X}\I(\L^+_x,\L^-_x)) \d\l_2^X(\L^+) \d\l_2^X(\L^-) \times \d\l_2^S(\L^+)\d\l_2^S(\L^-).
	\end{multline} 
\end{definition}
\noindent The measure $\mu_{\beta;S}^{(n)}$ has strong decoupling properties which we will make use of:
\begin{enumerate}[(1)]
	\item Conditionally on $(\L^-,\L^+_S)$, the collection
		\[
			\L^+_X = (\L^+_x)_{x \in X}
		\] 
		is independent under $\mu_{\beta;S}^{(n)}$.
	\item Conditionally on $\L^+_X$, $\L^-$ and $\L^+_S$ are independent under $\mu_{\beta;S}^{(n)}$.  
\end{enumerate}
These two properties are consequences of the density in \eqref{eq:mudef}, in particular, $\L^-$ and $\L^+_X$ are linked only through the factor $\exp(-\I(\L^+_x,\L^-_x)$ (which only appears for $x \in X$) and $\L^+_S$ and $\L^+_X$ are only linked through the term $-2\beta\sum_{x \in X, y \in S}\L^+_x \L^+_y$. This is illustrated in Figure \ref{fig:mudef}.

\begin{figure}
	\centering
	\includegraphics[scale=1.6]{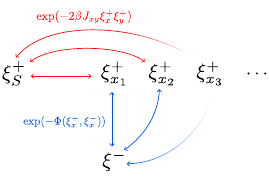}
	\caption{A representation of the dependencies in $\mu_{\beta;S}^{(n)}$: The vertices $x_1,x_2,x_3,\ldots$ enumerate $X$ and each drawn arrow represents a factor in \eqref{eq:mudef} involving both endpoints. Moreover, these factors induce a `negative correlation' in the sense that the corresponding terms in \eqref{eq:mudef} satisfy the lattice condition with respect to the order $(\L^+_X,-\L^+_S,-\L^-)$. As $\L^+_S$ and $\L^-$ are both negatively correlated with $\L^+_X$, they end up being positively correlated with one another.}
	\label{fig:mudef}
\end{figure}

We now prove the lemma comparing the complicated measure $\P^{+ / +}_{\beta}( \cdot \mid \S = S)$ with the simpler measure $\mu_{\beta;S}^{(n)}$.
\begin{lemma}\label{lemma:prelimlem}
In the setup of Lemma \ref{lemma:technical}, we have $\E^{ + / +}_{\beta,h}\left[ \, |\vp^+_v| \mid \S = S \right] \geq \mu_{\beta;S}^{(n)}\left[ \L^+_v \right].$ \end{lemma}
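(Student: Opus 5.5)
We prove the inequality by a stochastic domination argument, in the same spirit as Step 1 of the proof of Proposition \ref{prop:mainineq}. First we write down the law of $(\L^+,\L^-)$ under $\P^{+/+}_{\beta,h}(\cdot \mid \S = S)$. The event $\{\S = S\}$ says exactly that every vertex of $X = S^c$ lies in the $\w^+$-cluster of the ghost, and no vertex of $S$ does. Summing \eqref{eq:rotden3} over $\w^-$ and over those $\w^+$ realising this event, the $\w^+$-sum factorises. It splits into a sum over $\w^+$ on $E(S)$, which gives $Z^S(\L^+_S)$, times a factor forcing all edges between $S$ and $\overline{X}$ (including the ghost edges from $S$) to be closed. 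That factor equals
\[
\exp\Big(-4\beta\sum_{x\in X,y\in S}J_{xy}\L^+_x\L^+_y - 4h\sum_{y\in S}\L^+_y\Big).
\]
It is further multiplied by the probability-type weight $\Psi(\L^+_{\overline{X}})$ that $\overline{X}$ is $\w^+$-connected, i.e.\ a sum over connected spanning configurations on $\overline{E}(X)$ of $q$-weights. The conditional density relative to $\l_2\otimes\l_2$ is therefore proportional to
\[
Z^{G_n}(\L^-) Z^S(\L^+_S)\, \Psi(\L^+_X)\, e^{2\beta(\L^-,J\L^-)+2\beta(\L^+,J\L^+)+2(h,\L^+)} e^{-4\beta\sum J_{xy}\L^+_x\L^+_y - 4h(1,\L^+)_S} e^{-\sum_u\I(\L^+_u,\L^-_u)}.
\]

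Next we compare this with \eqref{eq:mudef}. Grouping terms, the relative density of the conditional law with respect to $\mu^{(n)}_{\beta;S}$ is proportional to
\[
\Psi(\L^+_X)\, e^{2\beta(\L^+,J\L^+)_X + 2(h,\L^+)_X}\, e^{-2h(1,\L^+)_S} e^{-\sum_{u\in S}\I(\L^+_u,\L^-_u)}.
\]
Some care with the edge and sign conventions is needed here; in particular the cross term $-2\beta\sum J_{xy}\L^+_x\L^+_y$ is already built into $\mu$. The factor $\Psi$ is increasing in $\L^+_X$, since the edge probabilities $1-e^{-4J\L_u\L_v}$ and $1-e^{-4h\L_u}$ are increasing. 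The factors $e^{2\beta(\L^+,J\L^+)_X+2(h,\L^+)_X}$ are likewise increasing in $\L^+_X$. The remaining factors $e^{-2h(1,\L^+)_S}$ and $e^{-\I(\L^+_u,\L^-_u)}$ for $u\in S$ are decreasing in $\L^+_S$ and in $\L^-$, by Lemma \ref{lemma:Iprops}(1). Hence the relative density is increasing with respect to the order $(\L^+_X,-\L^+_S,-\L^-)$.

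By Lemma \ref{lemma:stochdom}, it then suffices to show that $\mu^{(n)}_{\beta;S}$ satisfies the FKG inequality for this order. Granting that, the conditional law dominates $\mu^{(n)}_{\beta;S}$ in this order. Since $v\in X$ and $\L^+_v$ is increasing, we get $\E[|\vp^+_v|\mid\S=S]=\E[\L^+_v\mid \S=S]\ge\mu^{(n)}_{\beta;S}[\L^+_v]$.

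\textbf{Main obstacle.} The main difficulty is verifying the FKG lattice condition for $\mu^{(n)}_{\beta;S}$ in the mixed order $(\L^+_X,-\L^+_S,-\L^-)$. We check it factor by factor, as in the proof of Proposition \ref{prop:FKG}.
\begin{itemize}
\item $Z^{G_n}(\L^-)$, $Z^S(\L^+_S)$, $e^{2\beta(\L^-,J\L^-)}$ and $e^{2\beta(\L^+,J\L^+)_S}$ each depend on a single block. Each satisfies the lattice condition in that block, and reversing the order of the whole block preserves this, since $\wedge$ and $\vee$ are simply swapped.
\item The cross term $e^{-2\beta\sum J_{xy}\L^+_x\L^+_y}$ has nonnegative mixed second derivative with respect to $(\L^+_x,-\L^+_y)$, so it is fine.
\item $e^{-\I(\L^+_x,\L^-_x)}$ for $x\in X$ is handled by the Term 3 argument: Lemma \ref{lemma:Iprops}(2) shows that $\I$ is submodular in $(x,-y)$.
\end{itemize}
We first do this for $P$ of full support, which gives a positive density, and then pass to general $P\in\Ec$ by approximation as before. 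The point needing most attention is confirming that no factor couples $\L^+_X$ internally in a way that breaks the lattice condition. In the form \eqref{eq:mudef} there is no such factor, because the interaction within $X$ was deliberately dropped into the increasing part.
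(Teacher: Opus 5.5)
Your proposal is correct and follows the paper's proof essentially step by step. It uses the same conditional density on $\{\S=S\}$ and the same relative density with respect to $\mu^{(n)}_{\beta;S}$ (spanning factor, $X$-interaction and field terms, and $\I$ on $S$), shown increasing in $(\L^+_X,-\L^+_S,-\L^-)$, together with the same factor-by-factor lattice-condition check for $\mu^{(n)}_{\beta;S}$ in that mixed order and Lemma \ref{lemma:stochdom}; one small correction is that a ferromagnetic factor internal to $\L^+_X$ would not break the lattice condition, and the paper drops it from $\mu^{(n)}_{\beta;S}$ to get the decoupling properties used afterwards in Lemma \ref{lemma:technical}.
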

\begin{proof}
	The density of $(\L^+,\L^-)$ under $\P^{+ / +}\left[\,\cdot \mid \S = S \right]$  is proportional to
	\begin{multline}\label{eq:compden}
		Z^{G_n}(\L^-) Z^{S}(\L^+) \Big( \sum_{\substack{ \w \subset \overline{E}(X) \\ \w \text{ spanning}}} q_{\L^+}^{ \overline{X}}(\w) \Big )  \times \exp \big( 2\beta(\L^-,J\L^-) + 2\beta(\L^+,J\L^+)_S + 2\beta(\L^+,J\L^+)_X\big) \\ \times \exp\Big( - 2\beta \textstyle\sum_{\substack{x \in X \\ y \in S}} J_{xy} \L^+_x \L^+_y - 2(h,\L^+)_S + 2(h,\L^+)_X\Big)  \times \exp\big(- \sum_{u \in V} \I(\L^+_u,\L^+_u)\big) \d\l^G_2(\L^+) \d\l_2^G(\L^-).
	\end{multline}
	Here, the first sum is over the possible realisations of $\w^+|_{ \overline{E}(X)}$ on the event $\S = S$ i.e.~subsets of the edges in $ \overline{E}(X)$ which connect every vertex in $X \cup \left\{ \g \right\} $.
	Thus, the relative density of $(\L^+,\L^-)$ in $ \P^{+ / +}\left[ \,\cdot \mid \S = S \right]$ to $\mu_{\beta;S}^{(n)}$ is proportional to
	\begin{equation}\label{eq:relden}
		\underbrace{ \sum_{\substack{\w \subset \overline{E}(X) \\ \w \text{ spanning}}} q_{\L^+_X}^{ \overline{X}}(\w) }_{\text{(I)}} \times \underbrace{ \exp \big( 2\beta(\L^+,J\L^+)_X -2(h,\L^+)_S + 2(h,\L^+)_X\big)}_{\text{(II)}}  \times \underbrace{\exp\big( - \textstyle\sum_{u \in S}\I(\L^+_u,\L^-_u) \big)}_{\text{(III)}}. 
	\end{equation} 
	
	\noindent\emph{Claim 1. } The triple $(\L^+_X,-\L^+_S,-\L^-)$ satisfies the FKG inequality under $\mu_{\beta;S}^{(n)}$.
	\\[1em]
	We stress that this is a \emph{completely different ordering} to beforehand. In particular, $\L^+_S$ and $\L^-$ are now positively associated with each other. This is because, as shown in Figure \ref{fig:mudef}, $\L^-$ is negatively correlated with $\L^+_X$, due to the presence of the factor $\exp(-\I(\L^+_x,\L^-_x))$ for $x \in X$ (like before), and $\L^+_S$ is also negatively correlated with $\L^+_X$ because of the negative sign in the factor $\exp( - 2\beta\sum_{x \in X, y \in S} \L^+_x \L^+_y)$.
	These observations are all that is needed to verify the FKG lattice condition, using entirely the arguments as for Proposition \ref{prop:FKG} except with care now for the signs of each term in the exponential. 
	\\[1em]
	\noindent\emph{Claim 2. }The relative density in \eqref{eq:relden} is increasing with respect to $(\L^+_X,-\L^+_S,-\L^-)$.
	\\[1em]
	Indeed, this is most immediate for term (II), noting that $(\L^+,J\L^+)_X$ is increasing in $\L^+_X$ since $\L^+ \geq 0$. That term (I) is increasing is due to the fact that the edge parameters
	\[
		(1 - \exp( -4 \beta J_{xy} \L^+_x \L^+_y))_{xy \in E(X)} \text{ and } (1 - \exp( - 2h_x \L^+_x))_{x \in X}
	\] 
	are increasing in $\L^+_X$ and that the event $\left\{  \w \text{ spanning} \right\}$ is an increasing event, so the map
	\[
	\L^+_X \mapsto \sum_{\substack{\w \subset \overline{E}(X) \\ \w \text{ spanning}}} q_{\L^+_X}(\w)
	\] 
	is increasing by the stochastic monotonicity of (inhomogeneous) Bernoulli percolation on $ \overline{E}(X)$.
	Finally, the term (III) is increasing with respect to $(-\L^+_S,-\L^-)$ by Lemma \ref{lemma:Iprops}.
	\\[1em]
	Together, Claim 1 and 2 and Lemma \ref{lemma:stochdom} imply that there is a stochastic domination
	\[
		\P^{+ / +}(\cdot \mid \S = S) \overset{(\L^+_X,-\L^+_S,-\L^-)}{\succeq} \mu_{\beta;S}^{(n)},
	\] 
	which completes the proof.
\end{proof}

\begin{proof}[Proof of Lemma \ref{lemma:technical}]
	With Lemma \ref{lemma:prelimlem} in hand, it suffices to show that
	\begin{equation}\label{eq:muest}
		\mu_{\beta;S}^{(n)}\left[ \L^+_v  \right]  \geq c > 0
	\end{equation} 
	for each $n \geq 1$, $S \subset G_n$ and $v \not\in  S$.
	\\[1em]
	\noindent Let us first assume $\l$ has unbounded support, i.e.~$P$ is as in \eqref{eq:Pdef} with $A = \infty$. We will use the properties of $\mu_{\beta;S}^{(n)}$ to argue that \eqref{eq:muest} would follow if we knew  
	\begin{equation}\label{eq:upperbound}
		\mu_{\beta;S}^{(n)}\Bigg[ \sum_{\substack{u \in S : \,uv \in E}} J_{uv} \L^+_u \Bigg] \leq C \quad\text{ and }\quad \mu_{\beta;S}^{(n)}\left[ \L^-_v \right] \leq C'
	\end{equation} 
	for some $C,C' < \infty$. 
	Indeed, define
	\[
		F(\alpha,\gamma) = \frac{\int_{(0,\infty)} x \exp( - \gamma x - \I(x,\alpha)) \d\l(x)}{\int_{(0,\infty)} \exp( -\gamma x - \I(x,\alpha)) \d\l(x)}
	\] 
	for $\alpha,\gamma \geq 0$. Then, by the first decoupling property of $\mu_{\beta;S}^{(n)}$, we have that
	\[
		\mu_{\beta;S}^{(n)}\left[ \L^+_v \right] = \mu_{\beta;S}^{(n)}\Big[ F(\L^-_v,\sum_{\substack{u \in S : \,uv \in E}} 2\beta J_{uv} \L^+_u) \Big]. 
	\] 
	Therefore{\hypersetup{linkcolor=black}\footnote{Here, we used that if two non-negative random variables satisfy $\E X \leq C$ and $\E Y \leq C'$ then $\P(X \leq 3C, Y \leq 3C') \geq 1 / 3$. }},
	\[
		\mu^{(n)}_{\beta;S}\left[ |\vpp_v| \right] \geq \frac{1}{3} \inf \left\{ F(\alpha,\gamma) \mid \alpha \leq 3C', \gamma \leq {6C} / \eps  \right\}
	\] 
	where $C$ and $C'$ are the constants appearing in \eqref{eq:upperbound}. It is not hard to see that this lower bound (which only depends on $C,C'$ and $P$) is strictly positive as the map $F$ itself is strictly positive and continuous in $\alpha$ and $\gamma$.

	It remains to prove \eqref{eq:upperbound} for which we use one final stochastic domination. Let $(\vps,\vps') \sim \left< \cdot \right>_{S,2\beta,0,2P} \otimes \left< \cdot \right>_{G_n,2\beta,0,2P}$ i.e.~sampled according to the measure with
	\[
		\beta \mapsto 2\beta \quad\text{ and }\quad P \mapsto 2P
	\]
	and on $S$ and $G_n$ respectively. We next claim that for the marginal $(\L^+_S,\L^-) \sim \mu_{\beta;S}^{(n)}$, there is a stochastic domination
	\begin{equation}\label{eq:twodom}
		(\L^+_S,\L^-) \preceq (|\vps|,|\vps'|).
	\end{equation}
	Indeed, the relative density is simply proportional to the integral over $\L^+_X$,  
	\begin{equation}\label{eq:rel2}
		\int_{(0,\infty)^{X}} \exp\Big( -2\beta \textstyle\sum_{\substack{u \in S \\ x \in X}} J_{ux} \L^+_u \L^+_x - \textstyle\sum_{x \in X} \I(\L^+_x,\L^-_x)\Big)  \d\l_2^X(\L^+),
	\end{equation}
	The integrand in \eqref{eq:rel2} is increasing in $(\L^+_S,\L^-)$ by again using Lemma \ref{lemma:Iprops}, and so combined with the FKG inequality for $\mu_{\beta;S}^{(n)}$ we have the claimed domination.  
	Using this, \eqref{eq:upperbound} follows by applying Lemma \ref{lemma:massive} to the potential $2P$ and with  $\eps/2$ to obtain $C,C' > 0$ which are uniform in $\beta \leq 1 / \eps$, $n \geq 1$, $S \subset G_n$ and finally $v \in S$.
	\\[1em]
	\noindent Only a few simple adaptations are needed if $\l$ has bounded support, i.e.~$0 < A < \infty$ in \eqref{eq:Pdef}. Let $M$ be the supremum of the support of $\l$. We replace \eqref{eq:upperbound} by two estimates, noting that the first is deterministic, 
	\begin{equation}\label{eq:upperbound2}
		\sum_{\substack{u \in S \\ uv \in E}} J_{uv} \L^+_u  \leq C \quad\text{ and }\quad \mu_{\beta;S}^{(n)}\left[ \L^-_v \right] \leq (1-\delta)M
	\end{equation} 
	for some $C < \infty$ and $\delta > 0$. This modification is then sufficient to prove \eqref{eq:muest} by arguing exactly as above{\hypersetup{linkcolor=black}\footnote{Here, we instead use that if two random variables $X \leq C$ and $0 \leq Y \leq M$ satisfy $\E Y \leq (1-\delta)M$ then $\P(X \leq C, Y \leq (1-\delta)M ) \geq \delta$.}} and the fact that
	\[
		\delta \times \inf \left\{ F(\alpha,\gamma) \mid \alpha \leq (1-\delta)M, \gamma \leq {2C} / \eps  \right\}
	\] 
	is strictly positive for any $C < \infty$ and $\delta > 0$, for as before $F$ is continuous and $F(\alpha,\gamma) > 0$ for any $\gamma \geq 0$ and $\alpha < A$ (but not for $\alpha = A$).
	The first bound in \eqref{eq:upperbound2} is true by taking $C = \left( \sum_{x} J_{ox} \right) M$ and the second is true by the same domination argument above, and using that the density of $\vps_v$ under $\left< \cdot \right>_{G_n,2\beta,0,2P}$ is clearly bounded above and below by (with the same $C$)
	\[
	\exp( -2\beta C \vps_v) \d\l_2(\vps_v) \quad \text{ and }\quad \exp(2\beta C \vps_v) \d\l_2(\vps_v)
	\] 
	respectively, so that $\left< |\vps_v| \right>_{G_n,2\beta,0,2P} \leq (1-\delta)M$ for some $\delta = \delta(C,\eps)$.
\end{proof}
\begin{appendices}
\section{Proof of Proposition \ref{prop:main}} \label{sec:appendix}
This section is independent of the remainder of the paper and aims to prove Proposition \ref{prop:main}: 
if $G$ is a fully reduced graph with marked set of vertices $W$ and satisfied the $W$-crossing property, 
then it can be drawn in the unit disk with the vertices of $W$ along the boundary. 
Throughout, we assume the setting and definitions of Section \ref{sec:graphred}.
\\[1em]
\noindent Before starting, we need a classical result in graph theory, known as Menger's theorem \cite{Menger}, 
see \cite{Goring} for a particularly short proof. 

\begin{lemma}[Menger's theorem] \label{lemma:Mengerstheorem}
    Let $G=(V,E)$ be a finite graph and let $A, B \subseteq V$. 
    Then the minimal size of an  $(A, B)$-separator is equal to the number of vertex-disjoint paths connecting $A$ and $B$. 
\end{lemma}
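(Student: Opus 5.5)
We prove Menger's theorem by induction on $|E|$, following the short argument of \cite{Goring}. A path may have length zero, so a vertex of $A\cap B$ is itself a path. Separators may contain vertices of $A$ or of $B$.

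\textbf{Easy direction.} If there are $k$ vertex-disjoint $A$--$B$ paths, then any $(A,B)$-separator meets each of them. Since the paths share no vertices, the separator contains $k$ distinct vertices, so the minimal separator size is at least the number of disjoint paths.

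\textbf{Hard direction: setup.} Let $k$ be the minimal size of an $(A,B)$-separator in $G$. We must produce $k$ disjoint paths.

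\textbf{Base case.} If $E=\emptyset$, every $A$--$B$ path is a single vertex of $A\cap B$. Hence $A\cap B$ is a minimal separator, and its vertices give $k=|A\cap B|$ disjoint trivial paths.

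\textbf{Inductive step.} Pick an edge $e=xy$ and consider $G-e$.
\begin{itemize}
\item If $G-e$ still has no $(A,B)$-separator of size $<k$, the induction hypothesis gives $k$ disjoint paths in $G-e\subset G$, and we are done.
\item Otherwise $G-e$ has an $(A,B)$-separator $S$ with $|S|\le k-1$. Then $S_x:=S\cup\{x\}$ and $S_y:=S\cup\{y\}$ are both $(A,B)$-separators in $G$: an $A$--$B$ path in $G$ either avoids $e$, and so meets $S$, or uses $e$, and so contains both $x$ and $y$. Minimality of $k$ forces $|S|=k-1$ and $x,y\notin S$.
\end{itemize}

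\textbf{Paths on each side of $S$.} Any $(A,S_x)$-separator $T$ in $G-e$ is an $(A,B)$-separator in $G$. To see this, take an $A$--$B$ path $P$ in $G$ and let $z$ be the first vertex of $P$ in $S_x\cup\{y\}$; this vertex exists because $S_y$ is a separator. If $z\in S_x$, the initial segment of $P$ up to $z$ is an $A$--$S_x$ path in $G-e$, so it meets $T$. If $z=y$, the initial segment reaches $y$ without passing through $x$, hence without using $e$. Continuing from $y$, $P$ must still meet $S_x$, because $S_x$ separates $A$ from $B$ in $G$. We route the argument through the first vertex of $S_x$ along $P$, using that $x$ can only be reached via $y$, or via a path of $G-e$ avoiding $S$; this case analysis is the point needing the most care.

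Hence $|T|\ge k$, and the induction hypothesis (since $G-e$ has fewer edges) gives $k$ disjoint $A$--$S_x$ paths $P_1,\dots,P_k$ in $G-e$. Symmetrically we get $k$ disjoint $S_y$--$B$ paths $Q_1,\dots,Q_k$ in $G-e$. Each $s\in S$ is an endpoint of exactly one $P_i$ and one $Q_j$, and $x$, respectively $y$, is the endpoint of the remaining one.

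\textbf{Gluing.} We concatenate $P_i$ and $Q_j$ at each common vertex $s\in S$, and join the $P$-path ending at $x$ to the $Q$-path starting at $y$ through the edge $e$.

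\textbf{Main obstacle: disjointness of the glued paths.} We must show that a $P$-path and a $Q$-path meet only at their shared endpoint in $S$, and never at all across the $x$/$y$ pair. We may truncate each $P_i$ at its first vertex in $S_x$ and each $Q_j$ from its last vertex in $S_y$. Suppose some $P_i$ and $Q_j$ share a vertex $v\notin S$. Following $P_i$ from $A$ to $v$ and then $Q_j$ from $v$ to $B$ gives an $A$--$B$ walk in $G-e$. This walk avoids $S$ except possibly at its ends, and a short check of endpoints shows it avoids $S$ entirely, contradicting that $S$ separates $A$ from $B$ in $G-e$. The same argument shows the glued path through $e$ is disjoint from the others, which completes the induction.
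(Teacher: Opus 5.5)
Your proof has a genuine gap: the claim that every $(A,S_x)$-separator $T$ in $G-e$ is an $(A,B)$-separator in $G$ is false for an arbitrary labelling of the endpoints of $e$. So the case analysis you flag as ``the point needing the most care'' cannot be completed.

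Take $G$ to be the path $a,y,x,b$ with $A=\{a\}$, $B=\{b\}$ and $e=xy$. Then $k=1$, and $S=\emptyset$ separates $A$ from $B$ in $G-e$. The sets $S_x=\{x\}$ and $S_y=\{y\}$ are indeed separators in $G$. But $a$ and $x$ lie in different components of $G-e$, so $T=\emptyset$ is an $(A,S_x)$-separator in $G-e$, and the induction hypothesis gives no $A$--$S_x$ path at all. The $S_y$--$B$ side fails in the same way. This is exactly your problematic case: the path $a,y,x,b$ reaches $y$ first and then enters $S_x$ through $e$ itself. Your gluing step, by contrast, is correct and needs no orientation.

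The missing idea is to orient $e$ before defining $S_x$ and $S_y$.
\begin{itemize}
\item Let $C_A$ (resp.\ $C_B$) be the set of vertices joined to $A$ (resp.\ $B$) by a path in $G-e$ avoiding $S$. These sets are disjoint, because $S$ separates $A$ from $B$ in $G-e$.
\item Since $|S|<k$, some $A$--$B$ path of $G$ avoids $S$, so it must traverse $e$. The portion before its first traversal puts one endpoint of $e$ in $C_A$. The portion after its last traversal puts an endpoint in $C_B$. Disjointness forces these to be different endpoints.
\item Label them so that $x\in C_A$ and $y\in C_B$.
\end{itemize}
Now take an $A$--$B$ path $P$ in $G$ and its initial segment up to its first vertex in $S_x$. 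If this segment used $e$, it would have to end with the step from $y$ to $x$. The part before that step would then join $A$ to $y$ in $G-e$ while avoiding $S$, giving $y\in C_A$, a contradiction. So the segment is an $A$--$S_x$ path of $G-e$ and meets $T$. The $(S_y,B)$ side is symmetric, using $x\notin C_B$.

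Note also that your argument is not quite G\"oring's, which is the proof the paper cites \cite{Goring} instead of giving its own. G\"oring contracts $e$ rather than deleting it. A separator $Y$ of $G/e$ with $|Y|<k$ must then contain the contracted vertex $v_e$, so $X=(Y\setminus\{v_e\})\cup\{x,y\}$ is a $k$-vertex separator of $G$ containing both ends of $e$. An initial segment of an $A$--$B$ path up to its first vertex in $X$ can then never traverse $e$, so the separator claim is immediate and the orientation issue never arises.
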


\begin{proof}[Proof of Proposition \ref{prop:main}]
    We will apply induction on the number of vertices of $G = (V, E)$. 
    For the base case, $|V| = |W| = 4$, the result is trivial. 
    
    Assume the result holds for all finite graphs with $ k - 1$ or less vertices.
    Let $G = (V, E)$ be a finite graph with $k$ vertices, which is fully reduced. 
    If $|V| = |W|$, the result is an immediate consequence of the $W$-crossing property and we are done. 
    Thus, we may suppose without loss of generality that there is some vertex $x \in V \setminus W$. 
    Pick any such vertex and an edge $e = \{x, y\}$ emanating from it.
    
    Write $G / e$ for the graph obtained from contracting $e $, let $v_e$ be the obtained vertex. 
    Then $G / e$ has $k - 1$ vertices, hence the induction hypothesis implies that either $G / e$ can be reduced, 
    or that $G / e$ can be drawn in the disk. We handle the two cases separately. 
	\\
	\\
	\noindent \textbf{Case 1}. 
	$G / e$ can be reduced, with reduction set $S$.
    Write $X$ for the (non-empty) maximal disconnected set. 
    
    Notice that $v_e \in S$, as otherwise $S$ would be a reduction set for $G$, but $G$ is fully reduced by assumption. 
    A similar argument gives $|S| = 3$. 
    Hence, in the graph $G$, the set $P = (S \setminus \{v_e\}) \cup \{x, y\}$ has size four and $P$ is a $(W, X)$ separator. 
    
    Note that $P$ may contain elements of $W$, but by construction is not equal to $W$. 
    Let $Y$ be the connected component of $W$ in $G - P$, so $V$ is the disjoint union of $X, P$ and $Y$. 
    Thus, by construction, both $X$ and $Y$ are non-empty. 
    
    Write $G_X$ for the subgraph of $G$ induced by $X \cup P$ and $G_Y$ for the one induced by $Y \cup P$. 
    
    By Menger's theorem, there exist $4$ disjoint paths connecting $W$ to $P$ in $G - P$, 
    since $G$ is fully reduced. 
    As such, the ordering of elements in $W$ induces and ordering of $P = (p_1, p_2, p_3, p_4)$, 
    and the graph $G_X$ must satisfy the $P$-crossing property. 
    Moreover, $G_X$ is reduced because $G$ is reduced by assumption. 
    Since $|X \cup P| < |V|$, the induction hypothesis implies that $G_X$ can be drawn in the disk with $P$ on the boundary of the disk. 
    
    Write next $G_Y'$ for the graph obtained by adding to $G_Y$ edges between elements of $P$ in cyclic order. 
    We make the following claim. 
    \\
    \\
    \noindent \textbf{Claim:}
    The graph $G_Y'$ is fully reduced and satisfies the $W$-crossing property. 
    \\
    \\
    \noindent Let us first argue how to conclude the proof of Proposition \ref{prop:main} from this claim. 
    Since the number of vertices of $G_Y'$ is less or equal to $k -1$, 
    the claim and the induction hypothesis imply that $G_Y'$ can be drawn in the disk. 
    The vertices of $P$ lie on one cycle, hence they must form a face in the drawing. 
    Indeed, consider four disjoint paths ${\gamma}_1, \ldots, {\gamma}_4$ connecting 
    $x_1, \ldots, x_4$ to $p_1, \ldots, p_4$ inside $G_Y'$ 
    If $P$ is not a face, then we must have (up to relabeling) 
    that the edge connecting $p_1, p_2$ either intersects one of the paths ${\gamma}_1, \ldots, {\gamma}_4$, 
    intersects one of the other edges $p_i, p_{i+1}$ or must go outside of the disk 
    (since $W$ lies on the boundary of the disk). 
    None of this can happen because $G_Y'$ is drawn in the disk with $W$ on the boundary, so $P$ is a face. 
    We can put $G_X$ inside this graph by gluing $G_X$ and $G_Y$ at the vertices of $P$ 
    and the obtained graph is planar and can be drawn in the disk. 
    Finally, removing the edges in the cycle of $P$ from this graph, we get back $G$ and hence for Case 1, 
	the induction is finished.
    
    We are left to prove the claim. 
	Observe that $G_Y'$ is fully reduced as any reduction set would also be a reduction set of $G$. 
    
    For the $W$ crossing property, argue by contradiction. 
	Suppose there exist two simple, disjoint paths in $G_Y'$ violating the $W$-crossing property, 
	say $\gamma$ and $\widetilde{\gamma}$ and say they connect $x_1, x_3$ and $x_2, x_4$ in $W$ respectively. 
	Both $\gamma$ and $\widetilde{\gamma}$ use (at least) one edge added between the vertices of $P$. 
	Indeed, suppose that $\gamma$ does not use any of the edges added to $P$, 
	then it is a path in $G$. 
	Since $G_X$ is connected, also the path $\widetilde{\gamma}$ can be lifted to $G$ without intersecting $\gamma$, 
	contradicting the $W$-crossing property of $G$. 

	Thus, $\gamma$ and $\widetilde{\gamma}$ use an edge of $P$ and all elements of $P$, 
	and since $G_Y$ is planar and can be drawn in the disk with $W$ as a single face, the induced ordering on $P$ by $\gamma, \widetilde{\gamma}$
	viewed in $G_Y$ has to agree with the one induced by $W$ above. 
	This is a contradiction with the existence of the non-intersecting simple paths $\gamma$ and $\widetilde{\gamma}$,
	and as such proves the claim. 
	\\
	\\
	\textbf{Case 2:} $G / e$ is fully reduced. 
	By the induction hypothesis, $G / e$ can be drawn in the disk with $W$ on the boundary. 
	The only problem that can still occur is that when `unmerging' $xy$, there are edges that now must cross. 
	Consider the set $N_x$ of neighbors of $x$ in $G$. 
	The graph $G - x$ is planar since $G / e$ is planar, and thus can be drawn with $W$ on the boundary of the disk. 
	We claim that $N_x$ must lie on a face of $G - x$, in which case we are done. 
	Suppose not. 
	For every subset $A \subset N_x$ of size $3$, there must exist $3$ disjoint paths simple paths to $W$ in $G - x$, 
	since $G$ is fully reduced, and furthermore, there exists a further simple disjoint path from $x$ to $W$, 
	which exits $N_x$ at $a'$. 
	This induced an ordering of the elements in $A \cup \{a'\} = (a_1, \ldots, a_4)$. 
	
	If $N_x$ does not lie on a face of $G - x$, there hence exists an ordered set $A' = (a_1, \ldots, a_4)$ contained in $N_x$, 
	such that there are four disjoint paths $\tau_1, \ldots, \tau_4$ connecting $A'$ to $W$ \emph{and furthermore} such that
	(up to cyclic relabeling) there exists a further path in $G - x$ connecting $a_1$ and $a_3$ which only intersects $\tau_1, \ldots, \tau_4$ 
	at $a_1, a_3$. 
	But this readily implies that in $G$, the $W$-crossing property is violated, so $N_x$ must lie on a face of $G - x$,
	finishing the induction in Case 2 and therefore the proof of Proposition \ref{prop:main}. 
\end{proof}
\end{appendices}
\bibliography{DFK.bib}

@preamble{"\def\cprime{$'$} "}

@article{preston1974generalization,
	author = {Preston, C. J.},
	doi = {10.1007/BF01645981},
	journal = {Communications in Mathematical Physics},
	month = sep,
	number = {3},
	pages = {233--241},
	title = {A generalization of the {FKG} inequalities},
	volume = {36},
	year = {1974}}

@article{ruelle1976probability,
	author = {D. Ruelle},
	journal = {Communications in Mathematical Physics},
	number = {3},
	pages = {189 -- 194},
	publisher = {Springer},
	title = {{Probability estimates for continuous spin systems}},
	volume = {50},
	year = {1976}}

@article{guillarmou20262d,
	author = {Guillarmou, Colin and Gunaratnam, Trishen S and Vargas, Vincent},
	journal = {Communications in Mathematical Physics},
	number = {5},
	pages = {97},
	publisher = {Springer},
	title = {{2d Sinh-Gordon Model on the Infinite Cylinder}},
	volume = {407},
	year = {2026}}

@article{SinhGordon,
	author = {A.E. Arinshtein and V.A. Fateyev and A.B. Zamolodchikov},
	doi = {https://doi.org/10.1016/0370-2693(79)90561-6},
	issn = {0370-2693},
	journal = {Physics Letters B},
	number = {4},
	pages = {389-392},
	title = {{Quantum S-matrix of the (1 + 1)-dimensional Todd chain}},
	url = {https://www.sciencedirect.com/science/article/pii/0370269379905616},
	volume = {87},
	year = {1979}}

@article{easo2025counting,
	author = {Easo, Philip and Severo, Franco and Tassion, Vincent},
	doi = {10.1017/fmp.2025.10011},
	journal = {Forum of Mathematics, Pi},
	pages = {e23},
	title = {Counting minimal cutsets and $p_c<1$},
	volume = {13},
	year = {2025}}

@article{duminil2020existence,
	author = {Hugo Duminil-Copin and Subhajit Goswami and Aran Raoufi and Franco Severo and Ariel Yadin},
	doi = {10.1215/00127094-2020-0036},
	journal = {Duke Mathematical Journal},
	number = {18},
	pages = {3539 -- 3563},
	publisher = {Duke University Press},
	title = {{Existence of phase transition for percolation using the Gaussian free field}},
	url = {https://doi.org/10.1215/00127094-2020-0036},
	volume = {169},
	year = {2020}}

@article{glimm1975phase,
	author = {James Glimm and Arthur Jaffe and Thomas Spencer},
	journal = {Communications in Mathematical Physics},
	number = {3},
	pages = {203 -- 216},
	publisher = {Springer},
	title = {{Phase transitions for $\phi_{2}^{4}$ quantum fields}},
	volume = {45},
	year = {1975}}

@article{Lis22a,
	author = {Lis, Marcin},
	doi = {10.1093/imrn/rnaa380},
	issn = {1073-7928},
	journal = {International Mathematics Research Notices},
	number = {13},
	pages = {9909-9940},
	title = {{On Boundary Correlations in Planar Ashkin--Teller Models}},
	url = {https://doi.org/10.1093/imrn/rnaa380},
	volume = {2022},
	year = {2022}}

@article{Lis22b,
	author = {Marcin Lis},
	doi = {10.1214/22-EJP761},
	journal = {Electronic Journal of Probability},
	pages = {1 -- 21},
	publisher = {Institute of Mathematical Statistics and Bernoulli Society},
	title = {{Spins, percolation and height functions}},
	url = {https://doi.org/10.1214/22-EJP761},
	volume = {27},
	year = {2022}}

@article{RobSey,
	author = {Robertson, Neil and Seymour, P. D.},
	doi = {10.1016/0095-8956(90)90063-6},
	fjournal = {Journal of Combinatorial Theory. Series B},
	issn = {0095-8956,1096-0902},
	journal = {J. Combin. Theory Ser. B},
	mrclass = {05C10},
	mrnumber = {1056819},
	mrreviewer = {D.\ S.\ Archdeacon},
	number = {1},
	pages = {40--77},
	title = {Graph minors. {IX}.\ {D}isjoint crossed paths},
	url = {https://doi.org/10.1016/0095-8956(90)90063-6},
	volume = {49},
	year = {1990}}

@article{AHL,
	author = {{Alcalde L{\'o}pez}, Tom{\'a}s and Heeney, Lorca and Lis, Marcin},
	journal = {arXiv preprint arXiv:2602.05886},
	title = {{The Ising magnetisation field and the Gaussian free field}},
	year = {2026}}

@article{GPPS2,
	author = {Gunaratnam, Trishen S and Panagiotis, Christoforos and Panis, Romain and Severo, Franco},
	journal = {arXiv preprint arXiv:2501.05353},
	title = {{The supercritical phase of the $\varphi^4$ model is well behaved}},
	url = {https://arxiv.org/abs/2501.05353},
	year = {2025}}

@article{vEGPS,
	author = {Engelenburg, Diederik {van} and Gabran, Christophe and Panis, Romain and Severo, Franco},
	journal = {arXiv preprint arXiv:2510.23423},
	title = {{One-arm exponents of the high-dimensional Ising model}},
	url = {https://arxiv.org/abs/2510.23423},
	year = {2025}}

@article{DCP-2-point,
	author = {Duminil-Copin, Hugo and Panis, Romain},
	doi = {10.1007/s00220-025-05236-2},
	fjournal = {Communications in Mathematical Physics},
	issn = {0010-3616,1432-0916},
	journal = {Comm. Math. Phys.},
	mrclass = {60K35 (82B20 82B27)},
	mrnumber = {4865900},
	mrreviewer = {Rinaldo\ Schinazi},
	number = {3},
	pages = {Paper No. 56, 23},
	title = {New lower bounds for the (near) critical {I}sing and {$\varphi^4$} models' two-point functions},
	url = {https://doi.org/10.1007/s00220-025-05236-2},
	volume = {406},
	year = {2025}}

@article{ADC,
	author = {Aizenman, Michael and Duminil-Copin, Hugo},
	doi = {10.4007/annals.2021.194.1.3},
	fjournal = {Annals of Mathematics. Second Series},
	issn = {0003-486X,1939-8980},
	journal = {Ann. of Math. (2)},
	mrclass = {82B20 (60G60 82B27)},
	mrnumber = {4276286},
	mrreviewer = {Rongfeng\ Sun},
	number = {1},
	pages = {163--235},
	title = {Marginal triviality of the scaling limits of critical 4{D} {I}sing and {$\phi_4^4$} models},
	url = {https://doi.org/10.4007/annals.2021.194.1.3},
	volume = {194},
	year = {2021}}

@article{GRS,
	author = {Guerra, F. and Rosen, L. and Simon, B.},
	doi = {10.2307/1970988},
	fjournal = {Annals of Mathematics. Second Series},
	issn = {0003-486X},
	journal = {Ann. of Math. (2)},
	mrclass = {82.46 (81.46)},
	mrnumber = {378670},
	mrreviewer = {R.\ A.\ Minlos},
	pages = {111--189; ibid. (2) 101 (1975), 191--259},
	title = {The {${\bf P}(\phi)\sb{2}$} {E}uclidean quantum field theory as classical statistical mechanics. {I}, {II}},
	url = {https://doi.org/10.2307/1970988},
	volume = {101},
	year = {1975}}

@article{Goring,
	author = {G\"{o}ring, Frank},
	doi = {10.1016/S0012-365X(00)00088-1},
	fjournal = {Discrete Mathematics},
	issn = {0012-365X,1872-681X},
	journal = {Discrete Math.},
	mrclass = {05C40 (05C20)},
	mrnumber = {1761733},
	number = {1-3},
	pages = {295--296},
	title = {Short proof of {M}enger's theorem},
	url = {https://doi.org/10.1016/S0012-365X(00)00088-1},
	volume = {219},
	year = {2000}}

@article{Menger,
	author = {Menger, Karl},
	journal = {Fundamenta Mathematicae},
	language = {ger},
	number = {1},
	pages = {96-115},
	title = {{Zur allgemeinen Kurventheorie}},
	url = {http://eudml.org/doc/211191},
	volume = {10},
	year = {1927}}

@article{GPPS,
	author = {Gunaratnam, Trishen S and Panagiotis, Christoforos and Panis, Romain and Severo, Franco},
	journal = {arXiv preprint arXiv:2211.00319},
	title = {Random tangled currents for $\varphi^4$: translation invariant {G}ibbs measures and continuity of the phase transition},
	year = {2022}}

@article{LisT,
	author = {Lis, Marcin},
	doi = {10.1007/s10955-016-1690-x},
	id = {Lis2017},
	isbn = {1572-9613},
	journal = {Journal of Statistical Physics},
	number = {1},
	pages = {72--89},
	title = {{The Planar Ising Model and Total Positivity}},
	url = {https://doi.org/10.1007/s10955-016-1690-x},
	volume = {166},
	year = {2017}}

@article{LamOtt,
	author = {Lammers, Piet and Ott, S{\'e}bastien},
	journal = {arXiv preprint arXiv:2101.05139},
	title = {{Delocalisation and absolute-value-FKG in the solid-on-solid model}},
	year = {2021}}

@book{FriVel,
	author = {Friedli, S. and Velenik, Y.},
	doi = {10.1017/9781316882603},
	isbn = {978-1-107-18482-4},
	place = {Cambridge},
	publisher = {Cambridge University Press},
	title = {{Statistical Mechanics of Lattice Systems: A Concrete Mathematical Introduction}},
	year = {2017}}

@article{MMS,
	author = {Messager, A. and Miracle-Sole, S.},
	da = {1977/10/01},
	doi = {10.1007/BF01040105},
	id = {Messager1977},
	isbn = {1572-9613},
	journal = {Journal of Statistical Physics},
	number = {4},
	pages = {245--262},
	title = {{Correlation functions and boundary conditions in the Ising ferromagnet}},
	ty = {JOUR},
	url = {https://doi.org/10.1007/BF01040105},
	volume = {17},
	year = {1977}}

@article{DCT,
	author = {Duminil-Copin, Hugo and Tassion, Vincent},
	da = {2016/04/01},
	doi = {10.1007/s00220-015-2480-z},
	id = {Duminil-Copin2016},
	isbn = {1432-0916},
	journal = {Communications in Mathematical Physics},
	number = {2},
	pages = {725--745},
	title = {{A New Proof of the Sharpness of the Phase Transition for Bernoulli Percolation and the Ising Model}},
	ty = {JOUR},
	url = {https://doi.org/10.1007/s00220-015-2480-z},
	volume = {343},
	year = {2016}}

@article{Gin,
	author = {J. Ginibre},
	doi = {cmp/1103842172},
	journal = {Communications in Mathematical Physics},
	number = {4},
	pages = {310 -- 328},
	publisher = {Springer},
	title = {{General formulation of Griffiths' inequalities}},
	url = {https://doi.org/},
	volume = {16},
	year = {1970}}

@article{Lammers,
	author = {Lammers, P.},
	da = {2021/09/23},
	doi = {10.1007/s00440-021-01087-9},
	id = {Lammers2021},
	isbn = {1432-2064},
	journal = {Probability Theory and Related Fields},
	title = {Height function delocalisation on cubic planar graphs},
	ty = {JOUR},
	url = {https://doi.org/10.1007/s00440-021-01087-9},
	year = {2021}}

@article{She,
	author = {Sheffield, S.},
	journal = {Asterisque},
	number = {304},
	title = {Random surfaces},
	year = {2005}}

@article{Lis20,
	author = {Lis, Marcin},
	da = {2021/04/01},
	doi = {10.1007/s00220-021-03949-8},
	id = {Lis2021},
	isbn = {1432-0916},
	journal = {Communications in Mathematical Physics},
	number = {2},
	pages = {1181--1205},
	title = {On Delocalization in the Six-Vertex Model},
	ty = {JOUR},
	url = {https://doi.org/10.1007/s00220-021-03949-8},
	volume = {383},
	year = {2021}}

@unpublished{GlaPel,
	author = {Glazman, A. and Peled, R.},
	note = {arXiv:1909.03436},
	title = {{On the transition between the disordered and antiferroelectric phases of the 6-vertex model}},
	year = {2018}}

@article{FK,
	author = {Fortuin, C. M. and Kasteleyn, P. W.},
	journal = {Physica},
	number = {4},
	pages = {536--564},
	publisher = {Elsevier},
	title = {{On the random-cluster model: I. Introduction and relation to other models}},
	volume = {57},
	year = {1972}}

@article{ADCS,
	author = {Aizenman, Michael and Duminil-Copin, Hugo and Sidoravicius, Vladas},
	doi = {10.1007/s00220-014-2093-y},
	issn = {1432-0916},
	journal = {Communications in Mathematical Physics},
	number = {2},
	pages = {719--742},
	title = {{Random Currents and Continuity of Ising Model's Spontaneous Magnetization}},
	url = {http://dx.doi.org/10.1007/s00220-014-2093-y},
	volume = {334},
	year = {2015}}

@article{Aiz82,
	author = {Aizenman, M.},
	coden = {CMPHAY},
	fjournal = {Communications in Mathematical Physics},
	issn = {0010-3616},
	journal = {Comm. Math. Phys.},
	mrclass = {81E25 (82A68)},
	mrnumber = {678000 (84f:81078)},
	mrreviewer = {C. A. Hurst},
	number = {1},
	pages = {1--48},
	title = {Geometric analysis of {$\varphi ^{4}$} fields and {I}sing models. {I}, {II}},
	url = {http://projecteuclid.org/getRecord?id=euclid.cmp/1103921614},
	volume = {86},
	year = {1982}}

@article{Pei36,
	author = {Peierls, R.},
	journal = {Math. Proc. Camb. Phil. Soc.},
	pages = {477--481},
	title = {On {I}sing's model of ferromagnetism.},
	volume = {32},
	year = {1936}}

@unpublished{Rao17,
	author = {Raoufi, A.},
	note = {arXiv:1710.07608},
	title = {{Translation Invariant Ising Gibbs States, General Setting}}}

@article{Yan52,
	author = {Yang, C.N.},
	journal = {Phys. Rev. (2)},
	pages = {808--816},
	title = {The spontaneous magnetization of a two-dimensional {I}sing model},
	volume = {85},
	year = {1952}}

@article{newman,
	author = {Newman, C. M.},
	journal = {Zeitschrift f{\"u}r Wahrscheinlichkeitstheorie und Verwandte Gebiete},
	number = {2},
	pages = {75--93},
	publisher = {Springer},
	title = {Gaussian correlation inequalities for ferromagnets},
	volume = {33},
	year = {1975}}

@article{Lieb,
	author = {Lieb, Elliott H.},
	fjournal = {Communications in Mathematical Physics},
	journal = {Comm. Math. Phys.},
	number = {2},
	pages = {127--135},
	publisher = {Springer},
	title = {{A refinement of Simon's correlation inequality}},
	url = {https://projecteuclid.org:443/euclid.cmp/1103908400},
	volume = {77},
	year = {1980}}

@article{PfiVel,
	author = {Pfister, C. -E. and Velenik, Y.},
	day = {01},
	doi = {10.1007/BF02732435},
	issn = {1572-9613},
	journal = {Journal of Statistical Physics},
	month = {Sep},
	number = {5},
	pages = {1295--1331},
	title = {{Random-cluster representation of the Ashkin-Teller model}},
	url = {https://doi.org/10.1007/BF02732435},
	volume = {88},
	year = {1997}}

@article{ADTW,
	author = {Aizenman, Michael and Duminil-Copin, Hugo and Tassion, Vincent and Warzel, Simone},
	da = {2019/06/01},
	doi = {10.1007/s00222-018-00851-4},
	id = {Aizenman2019},
	isbn = {1432-1297},
	journal = {Inventiones mathematicae},
	number = {3},
	pages = {661--743},
	title = {{Emergent planarity in two-dimensional Ising models with finite-range Interactions}},
	ty = {JOUR},
	url = {https://doi.org/10.1007/s00222-018-00851-4},
	volume = {216},
	year = {2019}}

@article{AF,
	author = {Aizenman, M. and Fern{\'a}ndez, R.},
	doi = {10.1007/BF01011304},
	issn = {1572-9613},
	journal = {Journal of Statistical Physics},
	number = {3},
	pages = {393--454},
	title = {{On the critical behavior of the magnetization in high-dimensional Ising models}},
	url = {http://dx.doi.org/10.1007/BF01011304},
	volume = {44},
	year = {1986}}

@article{Ising,
	author = {Ising, E.},
	doi = {10.1007/BF02980577},
	fjournal = {Zeitschrift f\"ur Physik},
	issn = {0044-3328},
	journal = {Z. Physik},
	month = {FEB--APR},
	pages = {253--258},
	title = {Beitrag zur {T}heorie des {F}erromagnetismus},
	volume = {31},
	year = {1925}}

@article{GriffithsCor,
	author = {Robert B. Griffiths},
	doi = {10.1063/1.1705219},
	journal = {Journal of Mathematical Physics},
	number = {3},
	pages = {478-483},
	title = {{Correlations in Ising Ferromagnets. I}},
	volume = {8},
	year = {1967}}

@article{GHS,
	author = {Griffiths, R. B. and Hurst, C. A. and Sherman, S.},
	doi = {http://dx.doi.org/10.1063/1.1665211},
	journal = {Journal of Mathematical Physics},
	number = {3},
	pages = {790-795},
	title = {{Concavity of Magnetization of an Ising Ferromagnet in a Positive External Field}},
	url = {http://scitation.aip.org/content/aip/journal/jmp/11/3/10.1063/1.1665211},
	volume = {11},
	year = {1970}}

@article{FisherDim,
	author = {Fisher, M. E.},
	doi = {10.1063/1.1704825},
	journal = {Journal of Mathematical Physics},
	number = {10},
	pages = {1776-1781},
	title = {{On the Dimer Solution of Planar Ising Models}},
	volume = {7},
	year = {1966}}

@book{Grimmett,
	author = {Grimmett, G.},
	publisher = {Springer},
	series = {{Grundlehren der mathematischen Wissenschaften}},
	title = {{The Random-Cluster Model}},
	volume = {333},
	year = {2006}}

@article{FKG,
	ajournal = {Comm. Math. Phys.},
	author = {Fortuin, C. M. and Kasteleyn, P. W. and Ginibre, J.},
	journal = {Comm. Math. Phys.},
	number = {2},
	pages = {89--103},
	publisher = {Springer},
	title = {Correlation inequalities on some partially ordered sets},
	url = {http://projecteuclid.org/euclid.cmp/1103857443},
	volume = {22},
	year = {1971}}

@article{BFS,
	author = {Brydges, D. and Fr{\"o}hlich, J. and Spencer, T.},
	coden = {CMPHAY},
	fjournal = {Communications in Mathematical Physics},
	issn = {0010-3616},
	journal = {Comm. Math. Phys.},
	mrclass = {82A67 (81E10)},
	mrnumber = {648362 (83i:82032)},
	mrreviewer = {Gerhard C. Hegerfeldt},
	number = {1},
	pages = {123--150},
	title = {The random walk representation of classical spin systems and correlation inequalities},
	url = {http://projecteuclid.org/getRecord?id=euclid.cmp/1103920749},
	volume = {83},
	year = {1982}}

@article{GBK,
	author = {Boel, R.J. and Groeneveld, J. and Kasteleyn, P.W.},
	journal = {Physica 93A},
	pages = {138--154},
	publisher = {North-Holland Publiszhing Co.},
	title = {{Correlation-function Identities for General Planar Ising Systems}},
	year = {1978}}

@article{ABF,
	author = {{Aizenman, M. and Barsky, D.J. and Fern{\'a}ndez, R.}},
	journal = {{Journal of Statistical Physics}},
	number = {{3--4}},
	pages = {{343--374}},
	title = {{The phase transition in a general class of Ising-type models is sharp}},
	volume = {{47}},
	year = {{1987}}}

@article{KacWard,
	author = {Kac, M. and Ward, J. C.},
	doi = {10.1103/PhysRev.88.1332},
	fjournal = {Physical Review},
	issn = {0031-899X},
	journal = {Phys. Rev.},
	number = {6},
	pages = {1332--1337},
	title = {A Combinatorial Solution of the Two-Dimensional {I}sing Model},
	unique-id = {ISI:A1952UB44700023},
	volume = {88},
	year = {1952}}

@article{gunaratnam2024existence,
	author = {Gunaratnam, Trishen S and Krachun, Dmitrii and Panagiotis, Christoforos},
	journal = {Probability and Mathematical Physics},
	number = {3},
	pages = {785--845},
	publisher = {Mathematical Sciences Publishers},
	title = {{Existence of a tricritical point for the Blume--Capel model on ${\mathbb Z}^d$}},
	volume = {5},
	year = {2024}}

@article{simon1973varphi4,
	author = {Simon, Barry and Griffiths, Robert B},
	journal = {Communications in Mathematical Physics},
	number = {2},
	pages = {145--164},
	publisher = {Springer},
	title = {{The $(\varphi^4)_2$ field theory as a classical Ising model}},
	volume = {33},
	year = {1973}}

@article{newman1976rigorous,
	author = {Newman, Charles M},
	journal = {Journal of Statistical Physics},
	number = {5},
	pages = {399--406},
	publisher = {Springer},
	title = {{Rigorous results for general Ising ferromagnets}},
	volume = {15},
	year = {1976}}

@inproceedings{duminil2018random,
	author = {Duminil-Copin, Hugo},
	booktitle = {European congress of mathematics},
	organization = {European Mathematical Society-EMS-Publishing House GmbH},
	pages = {869--889},
	title = {{Random currents expansion of the Ising model}},
	year = {2018}}

@article{duminil2019sharp,
	author = {Duminil-Copin, Hugo and Raoufi, Aran and Tassion, Vincent},
	journal = {Annals of Mathematics},
	number = {1},
	pages = {75--99},
	publisher = {JSTOR},
	title = {{Sharp phase transition for the random-cluster and Potts models via decision trees}},
	volume = {189},
	year = {2019}}

@article{aoun2024phase,
	author = {Aoun, Yacine and Dober, Moritz and Glazman, Alexander},
	journal = {Communications in Mathematical Physics},
	number = {2},
	pages = {37},
	publisher = {Springer},
	title = {{Phase diagram of the Ashkin--Teller model}},
	volume = {405},
	year = {2024}}

@article{duminil2020exponential,
	author = {Duminil-Copin, Hugo and Goswami, Subhajit and Raoufi, Aran},
	journal = {Communications in mathematical physics},
	number = {2},
	pages = {891--921},
	publisher = {Springer},
	title = {{Exponential decay of truncated correlations for the Ising model in any dimension for all but the critical temperature}},
	volume = {374},
	year = {2020}}

@article{simon1980correlation,
	author = {Simon, Barry},
	journal = {Communications in Mathematical Physics},
	number = {2},
	pages = {111--126},
	publisher = {Springer},
	title = {Correlation inequalities and the decay of correlations in ferromagnets},
	volume = {77},
	year = {1980}}

@book{simon2026phase,
	author = {Simon, Barry},
	publisher = {Cambridge University Press},
	title = {Phase transitions in the theory of lattice gases},
	year = {2026}}

@article{ellis1976ghs,
	author = {Ellis, Richard S and Monroe, James L and Newman, Charles M},
	journal = {Communications in Mathematical Physics},
	number = {2},
	pages = {167--182},
	publisher = {Springer},
	title = {{The GHS and other correlation inequalities for a class of even ferromagnets}},
	volume = {46},
	year = {1976}}

@article{panagiotis2026subcritical,
	author = {Panagiotis, Christoforos and Veitch, William},
	journal = {arXiv preprint arXiv:2605.31344},
	title = {Subcritical sharpness for real-valued spin models},
	year = {2026}}

@article{glazman2025delocalisation,
	author = {Glazman, Alexander and Lammers, Piet},
	journal = {Communications in Mathematical Physics},
	number = {5},
	pages = {108},
	publisher = {Springer},
	title = {{Delocalisation and continuity in 2D: loop $O(2)$, six-vertex, and random-cluster models}},
	volume = {406},
	year = {2025}}

@inproceedings{harris1960lower,
	author = {Harris, Theodore E},
	booktitle = {Mathematical Proceedings of the Cambridge Philosophical Society},
	number = {1},
	organization = {Cambridge University Press},
	pages = {13--20},
	title = {A lower bound for the critical probability in a certain percolation process},
	volume = {56},
	year = {1960}}

@article{brydges1983random,
	author = {Brydges, David C and Fr{\"o}hlich, J{\"u}rg and Sokal, Alan D},
	journal = {Communications in mathematical physics},
	number = {1},
	pages = {117--139},
	publisher = {Springer},
	title = {{The random-walk representation of classical spin systems and correlation inequalities: II. The skeleton inequalities}},
	volume = {91},
	year = {1983}}

@article{lammers2022dichotomy,
	author = {Lammers, Piet},
	journal = {arXiv preprint arXiv:2211.14365},
	title = {A dichotomy theory for height functions},
	year = {2022}}

@article{glazman2025planar,
	author = {Glazman, Alexander and Harel, Matan and Zelesko, Nathan},
	journal = {arXiv preprint arXiv:2508.20917},
	title = {{Planar percolation and the loop $O(n)$ model}},
	year = {2025}}

@article{aizenman2020exponential,
	author = {Aizenman, Michael and Harel, Matan and Peled, Ron},
	journal = {Journal of Statistical Physics},
	number = {1},
	pages = {304--331},
	publisher = {Springer},
	title = {{Exponential decay of correlations in the 2d random field Ising model}},
	volume = {180},
	year = {2020}}

@article{panagiotis2026regularity,
	author = {Panagiotis, Christoforos and Veitch, William},
	journal = {arXiv preprint arXiv:2603.26319},
	title = {{Regularity of Gibbs measures for unbounded spin systems on general graphs}},
	year = {2026}}

@article{lis2019circle,
	author = {Lis, Marcin},
	journal = {Communications in Mathematical Physics},
	number = {2},
	pages = {507--530},
	publisher = {Springer},
	title = {{Circle Patterns and Critical Ising Models}},
	volume = {370},
	year = {2019}}

@article{lebowitz1976statistical,
	author = {Lebowitz, Joel L and Presutti, Errico},
	journal = {Communications in Mathematical Physics},
	number = {3},
	pages = {195--218},
	publisher = {Springer},
	title = {Statistical mechanics of systems of unbounded spins},
	volume = {50},
	year = {1976}}

@misc{gunaratnam2026supercriticalsharpnessrandomcluster,
	archiveprefix = {arXiv},
	author = {Trishen S. Gunaratnam and Dmitry Krachun and Christoforos Panagiotis and Romain Panis and Franco Severo},
	eprint = {2608.18045},
	primaryclass = {math.PR},
	title = {Supercritical sharpness for the random cluster representation of real-valued spin models},
	url = {https://arxiv.org/abs/2608.18045},
	year = {2026}}
\end{document}